\documentclass[10pt]{amsart}
\usepackage{amssymb,latexsym,pstricks}
\usepackage[normalem]{ulem}
\usepackage{orcidlink}
\topskip  \usepackage{color}
\usepackage{hyperref}
\usepackage{tensor}
\usepackage{enumitem}
\usepackage{mathabx}
\usepackage{comment}
\usepackage{amsfonts,amssymb,amsmath,amsthm,graphicx,latexsym}
\usepackage{mathrsfs,dsfont}
\usepackage{pstricks,pst-node,pst-plot}
\usepackage{pgfplots}
\usepackage{tikz}
\usepackage{bm}
\usepackage{caption}
\theoremstyle{thmstyleone}%
\newtheorem{theorem}{Theorem}[section]
\newtheorem{proposition}[theorem]{Proposition}%
\newtheorem{corollary}[theorem]{Corollary}

\theoremstyle{thmstyletwo}%
\newtheorem{example}[theorem]{Example}%
\newtheorem{remark}[theorem]{Remark}%

\theoremstyle{thmstylethree}%
\newtheorem{definition}[theorem]{Definition}%
\newtheorem{conjecture}[theorem]{Conjecture}%

\newcommand{\tp}{\mathbin{\hbox{$\bigcirc$\hbox to 0pt{\hspace{-0.81em}$\scriptstyle\top$\hfil}}}}

\begin{document}
\title[Normal ordering in the $(p,q)$-deformed generalized Weyl algebra]{Normal ordering in the $(p,q)$-deformed generalized Weyl algebra. III: The binomial formula}
\author[T. Mansour]{Toufik Mansour $^{1, \orcidlink{0000-0001-8028-2391}}$}
\address{$^1$ Department of Mathematics, University of Haifa, 3498838 Haifa, Israel, tmansour@univ.haifa.ac.il}
\author[L. Oussi]{Lahcen Oussi$^{2, \orcidlink{0000-0001-9804-0761}}$}
\address{$^2$ Faculty of Pure and Applied Mathematics, Wrocław University of Science and Technology, Wybrzeże Wyspiańskiego 27, 50-370 Wrocław, Poland, lahcen.oussi@pwr.edu.pl, oussimaths@gmail.com}
\author[M. Schork]{Matthias Schork $^{3, \orcidlink{0000-0002-9759-6618}}$}
\address{$^3$ Institute for Mathematics, Würzburg University, Emil-Fischer Str. 40, 97074 Würzburg, Germany, matthias.schork@mathematik.uni-wuerzburg.de}


\begin{abstract}
We study the $(p, q)$-deformed generalized Weyl algebra generated by variables $X, Y$ and $Z_p$ satisfying the $(p, q)$-commutation relations $XY-qYX=h Y^sZ_{p}, XZ_p=pZ_pX$, and $Z_pY=pYZ_p$, with $s\in \mathbb{N}_0$. Within this framework, we investigate the noncommutative binomial formula $(X+Y)^n$ and related identities. In particular, we show how the associated normal ordering coefficients can be expressed in terms of $(p,q)$-deformed $s$-rook numbers. We treat several special cases explicitly, recovering known results from literature as well as deriving new ones.
\end{abstract}
\keywords{Generalized Weyl algebra, Normal ordering, $(p, q)$-commuting variables, Binomial formula, Young diagrams, Rook numbers} 
\subjclass[2020]{05A10, 05A19, 05A30, 11B65, 11B73, 16S99} 
\maketitle
\section{Introduction}\label{sec1}
The present paper is a continuation of the systematic investigation initiated by the authors in \cite{TLM1,TLM2}. In  \cite{TLM1}, we established the algebraic framework and derived various combinatorial identities within the context of the $(p, q)$-\emph{deformed generalized Weyl algebra}. Subsequently, in \cite{TLM2},  we provided a combinatorial interpretation of the corresponding normal ordering process in term of rook placements on specialized boards. As a natural extension of these normal ordering results, we turn our attention in the present paper to binomial expansions. The ultimate starting point for all such expansions is the classical Newton's binomial formula,
\begin{equation}\label{CBF}
	(x+y)^n=\sum_{k=0}^{n}\binom{n}{k}y^{k}x^{n-k} ,
\end{equation} 
where the two variables $x$ and $y$ commute (hence, their order on the right-hand side is irrelevant). However, in the noncommutative setting, formula \eqref{CBF} does not hold true and one has to fix the order of the letters on the right-hand side. The noncommutative binomial formula $(U+V)^n$ was studied by several authors \cite{HBB1998, HBB1999, AHMM2002, HJYZ2023, TMMS2016, HR2000, MS2021, OVV1998}, depending on the commutation relation satisfied by the variables $U$ and $V$. For instance, Benaoum \cite{HBB1998} derived the binomial formula $(U+V)^n=\sum_{k=0}^{n}\binom{n}{k}_{h}V^kU^{n-k}$ for variables $U$ and $V$ satisfying $UV-VU=hV^2$ (i.e., the {\em Jordan plane}), where $\binom{n}{k}_{h}$ denotes {\em $h$-binomial coefficients}.  Moreover, Benaoum \cite{HBB1999}, derived also a $q$-deformation of this formula as 
$$(U+V)^n=\sum_{k=0}^{n}\binom{n}{k}_{q,h}V^kU^{n-k},$$ 
where the variables $U$ and $V$ satisfy the commutation relation of the {\em $q$-deformed Jordan plane},
\begin{equation}\label{qhcr}
	UV-qVU=hV^2,
\end{equation}
and where $\binom{n}{k}_{q,h}$ denotes a {\em $(q,h)$-binomial coefficient} which reduces for $q=1$ to the $h$-binomial coefficient. For $h=0$, \eqref{qhcr} leads to $UV=qVU$, i.e., the $q$-commutation relation of the {\em quantum plane} \cite{YIM1988}, and the related $q$-binomial formula for the quantum plane is a famous result due to Potter \cite{HSAP1950} and Schützenberger \cite{MPS1953},
\begin{equation}\label{BFPS}
	(U+V)^n=\sum_{k=0}^{n}\binom{n}{k}_qV^kU^{n-k},
\end{equation} 
where $\binom{n}{k}_q$ denotes {\em $q$-binomial coefficients} (or, {\em Gaussian binomial coefficients}). It is the natural $q$-analog to \eqref{CBF}. As a last example, let us mention that the binomial formula \eqref{Yamazaki} in the Weyl algebra was derived by Yamazaki in a physical context already in 1952, and it was given a combinatorial interpretation in terms of rook placements by Varvak in \cite{AV2005}, where also associated {\em Weyl binomial coefficients} were introduced. Mansour and Schork \cite{TMMS2011} studied the general case of the above mentioned formulas by considering variables $U$ and $V$ which satisfy the $q$-commutation relation $UV=qVU+hf(V)$, where $f$ is a polynomial, and investigated the binomial formula $(U+V)^n$ using Young diagrams and generating function techniques. Several special cases were treated explicitly, depending on the choice of parameters $h, q$ and polynomial $f$.  Most concrete results were derived for the case that $f$ is a monomial, $f(V)=V^s$ with $s\in \mathbb{N}_0:=\mathbb{N}\cup\{0\}$.

In the present paper, we consider variables $X, Y$ and $Z_p$ satisfying the following $(p,q)$-commutation relations, 
\begin{align}
	XY-qYX &= hf(Y)Z_p,\label{pqcr1} \\
	Z_pY&=pYZ_p, \label{pqcr2}\\
	XZ_p&=pZ_pX, \label{pqcr3}\\
	Z_1 &=I,\label{pqcr4}
\end{align}
where $f$ denotes a polynomial function. Within this framework, a word $\omega$ composed of letters $X,Y$ and $Z_p$ is considered to be in \emph{normal ordered} form if all the letters $Y$ stand to the left of all the letters $Z_p$ which stand to the left of all the letters $X$. More precisely, the normal ordered expansion takes the form $\omega=\sum_{k,l,m} a_{k,l,m}Y^kZ_p^lX^m$ for a given set of coefficients $a_{k,l,m}$, the {\em normal ordering coefficients}. While in our works \cite{TLM1,TLM2} we established the algebraic framework for normal ordering and an interpretation in terms of rook placements for these relations, the primary objective of the current work is to study the binomial formula for the expansion of $(X+Y)^n$ under the specific choice of the monomial $f(Y)=Y^s$, i.e., in the {\em $(p, q)$-deformed generalized Weyl algebra}. By varying and specializing the parameters $s,h,p$, and $q$, we analyze special cases, thereby recovering numerous results found throughout the literature as well as deriving new ones. For instance,  setting $p=1$ yields the $q$-commutation relation $XY-qYX = hf(Y)$ which was studied in~\cite{CCG2017,TMMS2011}. Additionally, by letting $q=1$, we recover the commutation relation investigated in~\cite{BLO2013,BLO2015,BLO2015a}. On the other hand, by specializing $X=D_{p, q}$ to be the $(p, q)$-derivative operator, $Y$ to be the operator of multiplication with $x$, and $Z_p$ to be the Fibonacci operator \cite{GCMDCH}, one obtains the specific $(p, q)$-commutation relations studied in \cite{LO2021}. \

The structure of this paper is organized as follows. In Section~\ref{secprel}, we provide a brief review and background regarding normal ordering in the $(p, q)$-deformed generalized Weyl algebra, along with its interpretation in terms of rook placements. Section~\ref{bfm} contains our main results, where we derive the explicit binomial formula for $(X+Y)^n$ within this $(p, q)$-deformed generalized Weyl algebra. Furthermore, we investigate numerous special cases by varying the parameters $s, h, p$, and $q$. Finally, in Section~\ref{concl}, we present some conclusions.

\section{Preliminaries}\label{secprel}
In this section, we review some basic tools and background regarding  normal ordering within the $(p,q)$-deformed generalized Weyl algebra and the associated rook numbers from \cite{TLM1,TLM2}.

For any non-negative integer $n\in\mathbb{N}_0$, we define the {\em twin-basic numbers} (viewed as a $(p,q)$-deformation of the natural numbers) by
$$
[n]_{p, q}:=\frac{p^n-q^n}{p-q}=\sum_{k=1}^{n}p^{n-k}q^{k-1}.
$$ 
The associated $(p,q)$-factorials and $(p, q)$-binomial coefficients are given, respectively, by
$$
[n]_{p, q}!=[n]_{p, q}[n-1]_{p, q}\cdots [2]_{p, q}[1]_{p, q},\,\,\, \text{with }\, [0]_{p, q}!=1,
$$
and 
$${n\choose k}_{p, q}=\frac{[n]_{p, q}!}{[n-k]_{p, q}![k]_{p, q}!}.$$ 
Furthermore, the corresponding {\em $(p,q)$-derivative} operator $D_{p,q}$ acts on a given function $f(x)$ as
$$
(D_{p,q}f)(x):=\frac{f(px)-f(qx)}{(p-q)x}.
$$
The action of $D_{p,q}$ on monomials is given, for $n \in \mathbb{N}$, by 
$$
D_{p,q}x^n=[n]_{p, q}x^{n-1}.
$$
In the limiting case where  $p=1$, we recover the $q$-deformation of the above definitions. We refer the reader to~\cite{LO2024} and the references therein for more details.

Let us recall the following definition from~\cite{TLM1}. 
\begin{definition}\label{DefpqGenWeyl}
	The {\em $(p, q)$-deformed generalized Weyl algebra $A_{s;h|p,q}$} is defined, for $s\in\mathbb{N}_{0}, h\in\mathbb{C}\setminus\{0\}$ and $p, q\in\mathbb{C}$, as the complex unital algebra generated by variables $X,Y$ and $Z_p$ satisfying 
	\begin{equation}\label{pqdefgenWeyl}
		XY-qYX = h Y^sZ_p, \,\, Z_pY=pYZ_p, \,\, XZ_p=pZ_pX, \,\, Z_1 =I.
	\end{equation}
\end{definition}
It follows that, for $p=1$, the above definition reduces to the $q$-deformed generalized Weyl algebra $A_{s;h|1,q}=A_{s;h|q}$ \cite[Definition 1.25]{TMMS2016}, which  was first introduced in \cite{TMMS2011}. On the other hand, the case $p=q=h=1$ was established by Burde \cite{Burde2005} (in the context of matrices $X$ and $Y$), and Varvak \cite{AV2005} (see also \cite[Section 9.1]{TMMS2016}). 

We consider normal ordering arbitrary words  in $X$ and $Y$ which satisfy the commutation relations \eqref{pqdefgenWeyl} of the $(p, q)$-deformed generalized Weyl algebra $A_{s;h|p,q}$. Such a word can be written in the form
\begin{equation}\label{wordxy}
	\mathrm{w}_{{\bf m}, {\bf n}}=Y^{n_r}X^{m_r}\cdots Y^{n_1}X^{m_1},
\end{equation}
with suitable exponents $m_k,n_k \in \mathbb{N}_0$, and ${\bf m}:=(m_1, \ldots, m_r), {\bf n}:=(n_1, \ldots, n_r)$. The resulting normal ordering coefficients can be interpreted as generalized Stirling numbers. Namely, within this context, for variables $X, Y$ and $Z_p$ satisfying the commutation relations \eqref{pqdefgenWeyl} of the $(p, q)$-deformed generalized Weyl algebra $A_{s;h|p,q}$, the $(p, q)$-deformed generalized Stirling numbers $S_{s, h; p, q}^{{\bf m}, {\bf n}}[k]$ are defined in \cite{TLM1} as normal ordering coefficients of the string $\mathrm{w}_{{\bf m}, {\bf n}}$ as follows,
\begin{equation}\label{pqrook}
	\mathrm{w}_{{\bf m}, {\bf n}}=\sum_{k=m_1}^{|{\bf m}|}S_{s, h; p, q}^{{\bf m}, {\bf n}}[k]Y^{|{\bf n}|-(|{\bf m}|-k)(1-s)}Z_{p}^{|{\bf m}|-k}X^{k}.
\end{equation}
For the special case  ${\bf m}={\bf 1}_r=(
\raisebox{0pt}[\height][0pt]{$\underbrace{1, 1\ldots1}_{r\ \text{times}})$}$ and ${\bf n}={\bf 1}_r=(
\raisebox{0pt}[\height][0pt]{$\underbrace{1, 1\ldots1}_{r\ \text{times}})$}$, \vspace{.5cm} we obtain the {\it generalized Stirling numbers} $\mathfrak{S}_{s;h}(n,k|p,q)$ introduced in \cite[Definition 3.3]{LO2021} as the normal ordering coefficients of $(YX)^n$ in $A_{s;h|p,q}$ via the expansion
\begin{equation}\label{StirGen}
	(YX)^n=\sum_{k=0}^n \mathfrak{S}_{s;h}(n,k|p,q) \, Y^{s(n-k)+k}Z_p^{n-k} X^{k}.
\end{equation}

Furthermore, normal ordering can also be interpreted using rook numbers. To  see this, we first recall the basic definitions of rook theory from \cite{TLM2,TMMS2011}.

By a {\em partition} $\lambda=(\lambda_1,\lambda_2,\ldots,\lambda_k) \equiv \lambda_1\lambda_2\ldots\lambda_k$, we mean a weakly decreasing sequence of positive integers whose weight is give by  $|\lambda|=\sum_{i=1}^k\lambda_i$. Geometrically, for a partition $\lambda$, the {\em Young diagram} $Y_\lambda$ of shape $\lambda$ is a left-justified diagram of $|\lambda|$ boxes, with $\lambda_i$ black boxes in the $i$th column. Let ${\mathcal I}_{\ell,k}$ denote the collection of all Young diagrams that are contained in a $ \ell \times k$ box, and define the disjoint union ${\mathcal I}_{m}=\bigcup_{k=0}^m {\mathcal I}_{m-k,k}$. Since the cardinality of each component ${\mathcal I}_{m-k,k}$ is given by the binomial coefficient $\binom{m}{k}$, it follows that $|{\mathcal I}_m|=2^m$. The empty partition is represented by a sequence of zeros (such as  $000 \in {\mathcal I}_{1,3}$ or $00 \in {\mathcal I}_{2,2}$, see  Figure~\ref{fig1}), though it is otherwise denoted simply by $\emptyset$.
		
		\begin{example}The set $\mathcal{I}_4$ containing 16 partitions is given in Figure~\ref{fig1}.
			\begin{center}
				\begin{figure}[htp]
					\begin{pspicture}(14,2.2)
						\psset{xunit=0.7,yunit=0.7}
						\put(-.5,0){\psline(0,2.8)(1,2.8) \put(.2,2.15){$\mathcal{I}_{0,4}$}\put(0.8,1.9){$\lambda=0000$}}
						\put(2,0){\put(0,.6){\psline(0,2)(.75,2)\psline(0,1.75)(.75,1.75)\psline(.25,1.75)(.25,2)\psline(.5,1.75)(.5,2)\psline(.75,1.75)(.75,2)
								\psline(0,1.75)(0,2)}\put(0.8,2.15){$\mathcal{I}_{1,3}$}\put(.75,1.8){$\lambda=000$}}
						\put(2,-.5){\put(0,.6){\psline(0,2)(.75,2)\psline(0,1.75)(.75,1.75)\psline(.25,1.75)(.25,2)\psline(.5,1.75)(.5,2)\psline(.75,1.75)(.75,2)
								\psline(0,1.75)(0,2)\psline[fillstyle=solid,fillcolor=black](0,2)(.25,2)(.25,1.75)(0,1.75)(0,2)}\put(.75,1.8){$\lambda=100$}}
						\put(2,-1){\put(0,.6){\psline(0,2)(.75,2)\psline(0,1.75)(.75,1.75)\psline(.25,1.75)(.25,2)\psline(.5,1.75)(.5,2)\psline(.75,1.75)(.75,2)
								\psline(0,1.75)(0,2)\psline[fillstyle=solid,fillcolor=black](0,2)(.5,2)(.5,1.75)(0,1.75)(0,2)}\put(.75,1.8){$\lambda=110$}}
						\put(2,-1.5){\put(0,.6){\psline(0,2)(.75,2)\psline(0,1.75)(.75,1.75)\psline(.25,1.75)(.25,2)\psline(.5,1.75)(.5,2)\psline(.75,1.75)(.75,2)
								\psline(0,1.75)(0,2)\psline[fillstyle=solid,fillcolor=black](0,2)(.75,2)(.75,1.75)(0,1.75)(0,2)}\put(.75,1.8){$\lambda=111$}}
						\put(4.5,0){\put(0,.6){\psline(0,2)(.5,2)(.5,1.5)(0,1.5)(0,2)\psline(0,1.75)(.5,1.75)\psline(.25,1.5)(.25,2)}
							\put(1,2.15){$\mathcal{I}_{2,2}$}\put(.5,1.7){$\lambda=00$}}
						\put(4.5,-.75){\put(0,.6){\psline(0,2)(.5,2)(.5,1.5)(0,1.5)(0,2)\psline(0,1.75)(.5,1.75)\psline(.25,1.5)(.25,2)
								\psline[fillstyle=solid,fillcolor=black](0,2)(.25,2)(.25,1.75)(0,1.75)(0,2)}\put(.5,1.7){$\lambda=10$}}
						\put(4.5,-1.5){\put(0,.6){\psline(0,2)(.5,2)(.5,1.5)(0,1.5)(0,2)\psline(0,1.75)(.5,1.75)\psline(.25,1.5)(.25,2)
								\psline[fillstyle=solid,fillcolor=black](0,2)(.5,2)(.5,1.75)(0,1.75)(0,2)}\put(.5,1.7){$\lambda=11$}}
						\put(6.25,0){\put(0,.6){\psline(0,2)(.5,2)(.5,1.5)(0,1.5)(0,2)\psline(0,1.75)(.5,1.75)\psline(.25,1.5)(.25,2)
								\psline[fillstyle=solid,fillcolor=black](0,2)(.25,2)(.25,1.5)(0,1.5)(0,2)}\put(.5,1.7){$\lambda=20$}}
						\put(6.25,-.75){\put(0,.6){\psline(0,2)(.5,2)(.5,1.5)(0,1.5)(0,2)\psline(0,1.75)(.5,1.75)\psline(.25,1.5)(.25,2)
								\psline[fillstyle=solid,fillcolor=black](0,2)(.5,2)(.5,1.75)(.25,1.75)(.25,1.5)(0,1.5)(0,2)}\put(.5,1.7){$\lambda=21$}}
						\put(6.25,-1.5){\put(0,.6){\psline(0,2)(.5,2)(.5,1.5)(0,1.5)(0,2)\psline(0,1.75)(.5,1.75)\psline(.25,1.5)(.25,2)
								\psline[fillstyle=solid,fillcolor=black](0,2)(.5,2)(.5,1.5)(0,1.5)(0,2)}\put(.5,1.7){$\lambda=22$}}
						\put(8.5,0){\put(0,.6){\psline(0,2)(.25,2)(.25,1.25)(0,1.25)(0,2)\psline(0,1.75)(.25,1.75)\psline(0,1.5)(.25,1.5)}
							\put(1,2.15){$\mathcal{I}_{3,1}$}\put(.25,1.5){$\lambda=0$}}
						\put(8.5,-1){\put(0,.6){\psline(0,2)(.25,2)(.25,1.25)(0,1.25)(0,2)\psline(0,1.75)(.25,1.75)\psline(0,1.5)(.25,1.5)
								\psline[fillstyle=solid,fillcolor=black](0,2)(.25,2)(.25,1.75)(0,1.75)(0,2)}\put(.25,1.5){$\lambda=1$}}
						\put(10.25,0){\put(0,.6){\psline(0,2)(.25,2)(.25,1.25)(0,1.25)(0,2)\psline(0,1.75)(.25,1.75)\psline(0,1.5)(.25,1.5)
								\psline[fillstyle=solid,fillcolor=black](0,2)(.25,2)(.25,1.5)(0,1.5)(0,2)}\put(.25,1.5){$\lambda=2$}}
						\put(10.25,-1){\put(0,.6){\psline(0,2)(.25,2)(.25,1.25)(0,1.25)(0,2)\psline(0,1.75)(.25,1.75)\psline(0,1.5)(.25,1.5)
								\psline[fillstyle=solid,fillcolor=black](0,2)(.25,2)(.25,1.25)(0,1.25)(0,2)}\put(.25,1.5){$\lambda=3$}}
						\put(12.7,0){\put(0,.6){\psline(0,2)(0,1)}\put(-.3,2.15){$\mathcal{I}_{4,0}$}\put(.1,1.25){$\lambda=0$}}
					\end{pspicture}
					\caption{The Young diagrams in $\mathcal{I}_4$.}\label{fig1}
				\end{figure}
			\end{center}
		\end{example}

By aligning the columns at the top, we identify the Young diagram $Y_{\lambda}$ with the Ferrers board denoted by $B_{\lambda}$. A {\em file placement of $k$ rooks} (or  $k$-{\em file placement}) on a Ferrers board $B$ is a placement of $k$ rooks  containing at most one rook in each column. We denote the set of all such placements by $\mathcal{F}_k(B)$ and its cardinality $f_k(B)=|\mathcal{F}_k(B)|$ is the $k$-{\em th file number of $B$}. A $k$-{\em rook placement} is a special kind of file placement where in addition no two rooks are in the same row (i.e., the $k$ rooks are {\it non-attacking}). The set of all $k$-rook placements on $B$ will be denoted by $\mathcal{R}_k(B)$, and $r_k(B)=|\mathcal{R}_k(B)|$ is the $k$-{\em th rook number of $B$}. Figure~\ref{FigFile} illustrates a $4$-file placement on the Ferrers board $B_{33211}$ corresponding to the partition $\lambda=33211$.
		
		\begin{figure}[h]
			\centering
			\begin{tikzpicture}
				
				\draw (0,0.5) -- (1,0.5);
				
				\draw (0.5,0.5) -- (0.5,1.5);
				\draw (1,0.5) -- (1,2);  
				\draw (1.5,1) -- (1.5,2); 
				\draw (2,1.5) -- (2,2);
				\draw (2.5,1.5) -- (2.5,2);  
				
				\draw (0,0.5) -- (0,2);
				\draw (0.5,1) -- (0.5,2); 
				\draw (0,1) -- (1.5,1); 
				\draw (0,1.5) -- (2.5,1.5); 
				\draw (0,2) -- (2.5,2);
				
				\filldraw (0.25,1.25) circle (3pt);
				\filldraw (0.75,0.75) circle (3pt);
				\filldraw (1.25,1.75) circle (3pt);
				\filldraw (2.25,1.75) circle (3pt);
			\end{tikzpicture}
			\caption{The Ferrers board $B_{33211}$ with a $4$-file placement which is not a $4$-rook placement.}\label{FigFile}
		\end{figure}
	
		This model of rook and file placements was generalized by the authors in~\cite{TLM2}, following the approach of Goldman and Haglund \cite{JGJH2000}. 
		Let $i\in \mathbb{N}_0$ and let $B$ be a Ferrers board. The {\em $i$-row creation rule} means that placing a rook on $B$ introduces $i$ new rows to its left (see,  Figure~\ref{FigFile2}). We place the rooks on the Board $B$ from right to left.  When creating $i$ new rows, we interpret the resulting top row as the row belonging to the rook just placed while the $i$ rows beneath it as new rows.
		\begin{figure}[h]
			\centering
			\begin{tikzpicture}
				
				\draw (0,0.5) -- (1,0.5);
				
				\draw (0.5,0.5) -- (0.5,1.5);
				\draw (1,0.5) -- (1,2);  
				\draw (1.5,1) -- (1.5,2); 
				\draw (2,1.5) -- (2,2);
				\draw (2.5,1.5) -- (2.5,2);  
				
				\draw[dashed] (0,0.75) -- (0.5,0.75);
				\draw[dashed] (0,1.75) -- (2.0,1.75);
				\draw[dashed] (0,1.62) -- (1,1.62);
				
				\draw (0,0.5) -- (0,2);
				\draw (0.5,1) -- (0.5,2); 
				\draw (0,1) -- (1.5,1); 
				\draw (0,1.5) -- (2.5,1.5); 
				\draw (0,2) -- (2.5,2);
				
				\filldraw (0.75,0.85) circle (1.5pt);
				\filldraw (1.25,1.65) circle (1.5pt);
				\filldraw (2.25,1.85) circle (1.5pt);
			\end{tikzpicture}
			\caption{The Ferrers board $B_{33211}$ with a $3$-rook placement with 1-row creation rule.}\label{FigFile2}
		\end{figure} 
		
		\begin{definition}[\cite{JGJH2000}]\label{SRook} Let $i\in \mathbb{N}_0$. Given a Ferrers board $B$, the {\em $k$-th $i$-rook number} $r_k^{(i)}(B)$ is the number of ways to place $k$ non-attacking rooks on the board $B$ going from right to left, creating $i$ new rows to the left of each rook.  
		\end{definition}
		Clearly, for $i=0$ one recovers the conventional rook numbers. For $i=1$ (Figure~\ref{FigFile2}), placing a rook creates a new row to its left. Thus, for the next rook to place, in each column to the left one has the same amount of boxes to choose from as the column has. This implies that a $k$-rook placement under the $1$-row creation rule corresponds exactly to a $k$-file placement. Thus,  
		\begin{equation}\label{RookRed}
			r_k^{(0)}(B)=r_k(B), \,\,\,\, r_k^{(1)}(B)=f_k(B). 
		\end{equation}
 
Following \cite{TLM2}, we now introduce a $(p,q)$-weight for rook placements that extends the case $p=1$ (for $s=0$). Let $\phi$ be a placement of $k$ non-attacking rooks on the Ferrrers board $B$. We partition the boxes of $B$ into 4 disjoint classes:
\begin{enumerate}
	\item A {\em rook box} is a box of $B$ occupied by a rook.
	\item A {\em $p$-box} is a box which is not a rook box and which is lying above or to the left of a rook, and
	\begin{itemize}
		\item it is not lying to the left of another rook, and
		\item it is not lying in the column of another rook.
	\end{itemize}
	\item A {\em cancelled box} is a box located above or to the left of a rook which is not a $p$-box.
	\item All remaining boxes of $B$ are {\em $q$-boxes}.
\end{enumerate} 

The {\em $(p,q)$-deformed rook numbers} are defined in \cite{TLM2} as follows.
\begin{definition}
	Let $B$ be a Ferrers board and let $\mathcal{R}(B, k)$ be the set of placements of $k$ non-attacking rooks on $B$. Then the {\em $(p,q)$-deformed rook numbers} are defined by
	\begin{equation}\label{PQRookWe}
		\mathscr{R}_{h; p,q}[B, k] :=\sum\limits_{\phi\in\mathcal{R}(B, k)} \omega_{p,q}(B;\phi)=h^k \sum\limits_{\phi\in\mathcal{R}(B, k)} p^{\# p-boxes}q^{\# q-boxes},
	\end{equation}
where 
\begin{equation}\label{PQWeight}
	\omega_{p,q}(B;\phi) :=h^k p^{\# p-boxes}q^{\# q-boxes},
\end{equation}
is the {\em $(p,q)$-weight} of a placement $\phi$ of $k$ non-attacking rooks on the Ferrers board $B$ .	
\end{definition}
In this setting, a $q$-box is simply any box that does not contain a rook and has no rooks in its row to the right or in its column below. 

\begin{example} 
In the following figure, we mark a rook box by a rook, a $p$-box (respectively, $q$-box) with a $p$ (respectively, $q$), and the cancelled boxes with an $x$. In Figure~\ref{FigFilePQ} a 6-rook placement on a board can be seen. There are in total 68 boxes: 6 rook-boxes, 20 $p$-boxes, 15 cancelled boxes, and 27 $q$-boxes. Thus, the $(p,q)$-weight of this placement is given by $h^6 p^{20}q^{27}$.

\begin{figure}[h]
	\centering
	\begin{tikzpicture}
		
		\draw (0,6.5) -- (6,6.5);
		\draw (0,6) -- (6,6);
		\draw (0,5.5) -- (4.5,5.5);
		\draw (0,5) -- (4.5,5);
		\draw (0,4.5) -- (4,4.5); 
		\draw (0,4) -- (4,4); 
		\draw (0,3.5) -- (3,3.5); 
		\draw (0,3) -- (2.5,3); 
		\draw (0,2.5) -- (2,2.5); 
		\draw (0,2) -- (2,2); 
		\draw (0,1.5) -- (1,1.5); 
		\draw (0,1) -- (0.5,1);

		
		\draw (0,1) -- (0,6.5);
		\draw (0.5,1) -- (0.5,6.5);
		\draw (1,1.5) -- (1,6.5);
		\draw (1.5,2) -- (1.5,6.5);
		\draw (2,2) -- (2,6.5);
		\draw (2.5,3) -- (2.5,6.5);
		\draw (3,3.5) -- (3,6.5);
		\draw (3.5,4) -- (3.5,6.5);
		\draw (4,4) -- (4,6.5);
		\draw (4.5,5) -- (4.5,6.5);
		\draw (5,6) -- (5,6.5);
		\draw (5.5,6) -- (5.5,6.5);
		\draw (6,6) -- (6,6.5);

		\node at (0.25,6.25) {x};
		\node at (0.75,6.25) {x};
		\node at (1.25,6.25) {x};
		\node at (1.75,6.25) {$p$};
		\node at (2.25,6.25) {x};
		\node at (2.75,6.25) {$p$};
		\node at (3.25,6.25) {x};
		\node at (3.75,6.25) {$p$};
		\node at (4.25,6.25) {$p$};
		\node at (4.75,6.25) {$p$};
		\filldraw (5.25,6.25) circle (3pt);
		\node at (5.75,6.25) {$q$};

		\node at (0.25,5.75) {x};
		\node at (0.75,5.75) {x};
		\node at (1.25,5.75) {x};
		\node at (1.75,5.75) {$p$};
		\node at (2.25,5.75) {x};
		\node at (2.75,5.75) {$p$};
		\filldraw (3.25,5.75) circle (3pt);
		\node at (3.75,5.75) {$q$};
		\node at (4.25,5.75) {$q$};
	
		\node at (0.25,5.25) {$p$};
		\node at (0.75,5.25) {$p$};
		\node at (1.25,5.25) {$p$};
		\node at (1.75,5.25) {$q$};
		\node at (2.25,5.25) {$p$};
		\node at (2.75,5.25) {$q$};
		\node at (3.25,5.25) {$q$};
		\node at (3.75,5.25) {$q$};
		\node at (4.25,5.25) {$q$};
		
		\filldraw (0.25,4.75) circle (3pt);
		\node at (0.75,4.75) {$p$};
		\node at (1.25,4.75) {$p$};
		\node at (1.75,4.75) {$q$};
		\node at (2.25,4.75) {$p$};
		\node at (2.75,4.75) {$q$};
		\node at (3.25,4.75) {$q$};
		\node at (3.75,4.75) {$q$};
		
		\node at (0.25,4.25) {$q$};
		\node at (0.75,4.25) {$p$};
		\node at (1.25,4.25) {$p$};
		\node at (1.75,4.25) {$q$};
		\node at (2.25,4.25) {$p$};
		\node at (2.75,4.25) {$q$};
		\node at (3.25,4.25) {$q$};
		\node at (3.75,4.25) {$q$};
	
		\node at (0.25,3.75) {$x$};
		\node at (0.75,3.75) {$x$};
		\filldraw (1.25,3.75) circle (3pt);
		\node at (1.75,3.75) {$q$};
		\node at (2.25,3.75) {$p$};
		\node at (2.75,3.75) {$q$};
		
		\node at (0.25,3.25) {$x$};
		\node at (0.75,3.25) {$x$};
		\node at (1.25,3.25) {$x$};
		\node at (1.75,3.25) {$p$};
		\filldraw (2.25,3.25) circle (3pt);
		
		\node at (0.25,2.75) {$q$};
		\node at (0.75,2.75) {$p$};
		\node at (1.25,2.75) {$q$};
		\node at (1.75,2.75) {$q$};
		
		\node at (0.25,2.25) {$x$};
		\filldraw (0.75,2.25) circle (3pt);
		\node at (1.25,2.25) {$q$};
		\node at (1.75,2.25) {$q$};

		\node at (0.25,1.75) {$q$};
		\node at (0.75,1.75) {$q$};
		
		\node at (0.25,1.25) {$q$};

	\end{tikzpicture}
	\caption{A Ferrers board with a $6$-rook placement and marking of the boxes according to their class.}\label{FigFilePQ}
\end{figure}
	
\end{example}

On the other hand, for the case $s>0$, we recall the  $(p,q)$-weights for $s$-rook placements introduced in~\cite{TLM2}, which extend the work of Celeste et al. \cite{CCG2017} to $p \neq 1$. Let $\phi$ be a rook placement on a Ferrers board $B$ satisfying the $s$-row creation rule, and let $\mathcal{R}_s(B, k)$ denotes the set of all such placements of $k$ rooks. For any $\phi \in \mathcal{R}_s(B, k)$, we partition the boxes of $B$ into 5 disjoint classes:
\begin{enumerate}
	\item A {\em rook box} is a box of $B$ occupied by a rook.
	\item A {\em $t$-box} is a box which is not a rook box and which is lying above a rook and not lying to the left of another rook.
	\item An {\em $\ell $-box} is a box which is not a rook box and which is lying to the left of a rook and which is not lying in the column of another rook.
	\item A {\em cancelled box} is a box which is not a rook box and which is lying to the left of a rook and lying in the column of another rook.
	\item All remaining boxes of $B$ are {\em $e$-boxes} (for ``empty'').
\end{enumerate}

The {\em $(p,q)$-deformed $s$-rook numbers} are defined as follows \cite{TLM2}.
\begin{definition}
	Let $s>0$. Let $B$ be a Ferrers board and let $\mathcal{R}_s(B, k)$ be the set of placements of $k$ rooks on $B$ satisfying the $s$-row creation rule. Then the {\em $(p,q)$-deformed $s$-rook numbers} are defined by
	\begin{equation}\label{PQRookWeS}
		\mathscr{R}_{s,h; p,q}[B, k] :=\sum\limits_{\phi\in\mathcal{R}_s(B, k)} \omega_{s;p,q}(B;\phi)=h^k \sum\limits_{\phi\in\mathcal{R}_s(B, k)} p^{\# t-boxes + \# \ell-boxes}q^{\# e-boxes},
	\end{equation}
where
	\begin{equation}\label{PQWeightS}
		\omega_{s;p,q}(B;\phi) :=h^k p^{\# t-boxes + \# \ell-boxes}q^{\# e-boxes},
	\end{equation}
is the {\em $(p,q)$-weight} of a placement $\phi$ of $k$ rooks satisfying the $s$-row creation rule on the Ferrers board $B$.
\end{definition}

Clearly, $\mathscr{R}_{s,h; p,q}[B, k] =h^k \mathscr{R}_{s,1; p,q}[B, k] $. Observe that for $p=1$ the weight of a placement reduces to $\omega_{s;1,q}(B;\phi)=h^k q^{ \# e-boxes}$, i.e., the parameter $q$ counts the number of boxes not containing a rook and not lying above or to the left of a rook. This is exactly the parameter introduced in \cite{CCG2017}, i.e., $\mathscr{R}_{s,1; 1,q}[B, k] =R_{s,1;q}[B, k]$ defined there as $q$-deformed $s$-rook number.
\begin{example}
	Figure~\ref{FigFileSRook} illustrates a placement $\phi$ of $6$ rooks on a Ferrers board $B$ under the  $s=2$ row creation rule. The board consists of  104 boxes in total, partitioned into $6$ rook boxes, $32$ $t$-boxes, $11$ $\ell$-boxes, $15$ cancelled boxes and $40$ $e$-boxes. Consequently, the  $(p,q)$-weight of this placement $\phi$ is given by $h^6p^{43}q^{40}$.
	\begin{figure}[h]
		\centering
		\begin{tikzpicture}
			
			\draw (-0.5,4.5) -- (5,4.5);
			\draw (-0.5,4) -- (5,4);
			\draw (-0.5,3.5) -- (4,3.5);
			\draw (-0.5,3) -- (4,3);
			\draw (-0.5,2.5) -- (2.5,2.5);
			\draw (-0.5,2) -- (2.5,2);
			\draw (-0.5,1.5) -- (1.5,1.5);
			\draw (-0.5,1) -- (1.5,1);
			\draw (-0.5,0.5) -- (1,0.5);
			
			\draw[dashed] (-0.5,4.33) -- (4,4.33);
			\draw[dashed] (-0.5,4.16) -- (4,4.16);
			
			\draw[dashed] (-0.5,3.83) -- (1,3.83);
			\draw[dashed] (-0.5,3.66) -- (1,3.66);
			
			\draw[dashed] (-0.5,3.3) -- (3.5,3.3);
			\draw[dashed] (-0.5,3.16) -- (3.5,3.16);
			
			\draw[dashed] (-0.5,2.33) -- (1.5,2.33);
			\draw[dashed] (-0.5,2.16) -- (1.5,2.16);
			
			\draw[dashed] (-0.5,0.83) -- (0.5,0.83);
			\draw[dashed] (-0.5,0.66) -- (0.5,0.66);
			
			\draw (-0.5,0.5) -- (-0.5,4.5);
			\draw (0,0.5) -- (0,4.5);
			\draw (0.5,0.5) -- (0.5,4.5); 
			\draw (1,0.5) -- (1,4.5); 
			\draw (1.5,1) -- (1.5,4.5);
			\draw (2,2) -- (2,4.5);
			\draw (2.5,2) -- (2.5,4.5);
			\draw (3,3) -- (3,4.5);
			\draw (3.5,3) -- (3.5,4.5);
			\draw (4,3) -- (4,4.5);
			\draw (4.5,4) -- (4.5,4.5);
			\draw (5,4) -- (5,4.5);
			
			\filldraw (4.25,4.375) circle (1.5pt);
			\filldraw (1.25,3.875) circle (1.5pt);
			\filldraw (3.75,3.375) circle (1.5pt);
			\filldraw (1.75,2.375) circle (1.5pt);
			\filldraw (0.75,0.875) circle (1.5pt);
			\filldraw (-0.25,1.25) circle (1.5pt);
			
			\node at (5.8,1) {$\rightsquigarrow $};

			\draw (6.5,4.5) -- (12,4.5);
			\draw (6.5,4) -- (12,4);
			\draw (6.5,3.5) -- (11,3.5);
			\draw (6.5,3) -- (11,3);
			\draw (6.5,2.5) -- (11,2.5);
			\draw (6.5,2) -- (11,2);
			\draw (6.5,1.5) -- (10.5,1.5);
			\draw (6.5,1) -- (10.5,1);
			\draw (6.5,0.5) -- (9.5,0.5);
			\draw (6.5,0) -- (9.5,0);
			\draw (6.5,-0.5) -- (8.5,-0.5);
			\draw (6.5,-1) -- (8.5,-1);
			\draw (6.5,-1.5) -- (8.5,-1.5);
			\draw (6.5,-2) -- (8.5,-2);
			\draw (6.5,-2.5) -- (8,-2.5);
			\draw (6.5,-3) -- (8,-3);
			\draw (6.5,-3.5) -- (8,-3.5);
			\draw (6.5,-4) -- (7.5,-4);
			\draw (6.5,-4.5) -- (7.5,-4.5);
			
			\draw (6.5,-4.5) -- (6.5,4.5);
			\draw (7,-4.5) -- (7,4.5);
			\draw (7.5,-4.5) -- (7.5,4.5);
			\draw (8,-3.5) -- (8,4.5);
			\draw (8.5,-2) -- (8.5,4.5);
			\draw (9,0) -- (9,4.5);
			\draw (9.5,0) -- (9.5,4.5);
			\draw (10,1) -- (10,4.5);
			\draw (10.5,1) -- (10.5,4.5);
			\draw (11,2) -- (11,4.5);
			\draw (11.5,4) -- (11.5,4.5);
			\draw (12,4) -- (12,4.5);

			\filldraw (11.25,4.25) circle (1.5pt);
			\filldraw (10.75,2.25) circle (1.5pt);
			\filldraw (8.75,0.25) circle (1.5pt);
			\filldraw (8.25,2.75) circle (1.5pt);
			\filldraw (7.75,-3.25) circle (1.5pt);
			\filldraw (6.75,-2.75) circle (1.5pt);

			\node at (6.75,4.25) {$x$};
			\node at (6.75,3.75) {$t$};
			\node at (6.75,3.25) {$t$};
			\node at (6.75,2.75) {$x$};
			\node at (6.75,2.25) {$x$};
			\node at (6.75,1.75) {$t$};
			\node at (6.75,1.25) {$t$};
			\node at (6.75,0.75) {$t$};
			\node at (6.75,0.25) {$x$};
			\node at (6.75,-0.25) {$t$};
			\node at (6.75,-0.75) {$t$};
			\node at (6.75,-1.25) {$t$};
			\node at (6.75,-1.75) {$t$};
			\node at (6.75,-2.25) {$t$};
			\node at (6.75,-3.25) {$x$};
			
			\node at (7.25,4.25) {$\ell$};
			\node at (7.25,2.75) {$\ell$};
			\node at (7.25,2.25) {$\ell$};
			\node at (7.25,0.25) {$\ell$};
			\node at (7.25,-3.25) {$\ell$};
			
			\node at (7.75,4.25) {$x$};
			\node at (7.75,3.75) {$t$};
			\node at (7.75,3.25) {$t$};			
			\node at (7.75,2.75) {$x$};
			\node at (7.75,2.25) {$x$};
			\node at (7.75,1.75) {$t$};
			\node at (7.75,1.25) {$t$};
			\node at (7.75,0.75) {$t$};
			\node at (7.75,0.25) {$x$};
			\node at (7.75,-0.25) {$t$};
			\node at (7.75,-0.75) {$t$};
			\node at (7.75,-1.25) {$t$};
			\node at (7.75,-1.75) {$t$};
			\node at (7.75,-2.25) {$t$};
			\node at (7.75,-2.75) {$t$};

			\node at (8.25,4.25) {$x$};
			\node at (8.25,3.75) {$t$};
			\node at (8.25,3.25) {$t$};
			\node at (8.25,2.25) {$x$};
			\node at (8.25,0.25) {$x$};

			\node at (8.75,4.25) {$x$};
			\node at (8.75,3.75) {$t$};
			\node at (8.75,3.25) {$t$};
			\node at (8.75,2.75) {$t$};
			\node at (8.75,2.25) {$x$};
			\node at (8.75,1.75) {$t$};
			\node at (8.75,1.25) {$t$};
			\node at (8.75,0.75) {$t$};

			\node at (9.25,4.25) {$\ell$};
			\node at (9.25,2.25) {$\ell$};
			
			\node at (9.75,4.25) {$\ell$};
			\node at (9.75,2.25) {$\ell$};
			
			\node at (10.25,4.25) {$\ell$};
			\node at (10.25,2.25) {$\ell$};
			
			\node at (10.75,4.25) {$x$};
			\node at (10.75,3.75) {$t$};
			\node at (10.75,3.25) {$t$};
			\node at (10.75,2.75) {$t$};
			
		\end{tikzpicture}
		\caption{The Ferrers board $B_{88875533311}$ with a $6$-rook placement for $s=2$ (left) and the equivalent representation with boxes marked by their class (right).}\label{FigFileSRook}
	\end{figure}  
\end{example}
In general, we have the following result \cite{TLM2}.
\begin{theorem}\label{TheoWordNOS}
	Let $\mathrm{w}_{{\bf m}, {\bf n}}=Y^{n_r}X^{m_r}\cdots Y^{n_1}X^{m_1}$ be a word in the letters $X$ and $Y$ satisfying the commutation relations \eqref{pqdefgenWeyl} of the $(p,q)$-deformed generalized Weyl algebra $A_{s;h|p,q}$. Then one has the normal ordering result
	\begin{equation}\label{pqrookNOS}
		\mathrm{w}_{{\bf m}, {\bf n}}=\sum_{k=0}^{|{\bf m}|-m_1} \mathscr{R}_{s,h; p,q}[B_{\mathrm{w}_{{\bf m}, {\bf n}}}, k] \, Y^{|{\bf n}|+(s-1)k}Z_p^{k}X^{|{\bf m}|-k},
	\end{equation}
	where $B_{\mathrm{w}_{{\bf m}, {\bf n}}}$ is the Ferrers board associated to $\mathrm{w}_{{\bf m}, {\bf n}}$ and $\mathscr{R}_{s,h; p,q}[B_{\mathrm{w}_{{\bf m}, {\bf n}}}, k]$ is given by \eqref{PQRookWeS}.
\end{theorem}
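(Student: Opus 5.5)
Our plan is to induct on the number of letters of the word, building the board $B_{\mathrm{w}_{{\bf m},{\bf n}}}$ one letter at a time. The rook weight should then reproduce, box by box, the coefficients that appear when a single $X$ is pushed through a power of $Y$. We read $\mathrm{w}_{{\bf m},{\bf n}}$ from left to right as a lattice path: each $X$ is a unit step that creates a new column, and each $Y$ is a unit step that adds one row to every existing column. Thus the column belonging to a given $X$ has height equal to the number of $Y$'s to its right. The columns with height zero, that is, the $m_1$ rightmost $X$'s, contribute nothing, which explains the range $k\le |{\bf m}|-m_1$.

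The key local identity is obtained from \eqref{pqdefgenWeyl} by induction on $j$:
\begin{equation*}
XY^{j}=q^{j}Y^{j}X+h\sum_{i=0}^{j-1}q^{i}p^{\,j-1-i}\,Y^{i}\,Y^{s}Z_pY^{j-1-i}
=q^{j}Y^{j}X+h\sum_{i=0}^{j-1}q^{i}p^{\,2(j-1-i)}\,Y^{j-1+s}Z_p.
\end{equation*}
Here the $p$-powers come from commuting $Z_p$ to the right past the remaining $Y$'s, and $X$ must likewise be commuted past any $Z_p$'s already created. The exact exponent of $p$ has to be recomputed carefully from $Z_pY=pYZ_p$ and $XZ_p=pZ_pX$. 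We expect it to match the count of boxes of the column lying above the rook ($t$-boxes) together with those lying to the left of it in its row ($\ell$-boxes).

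Combinatorially, we interpret the term with index $i$ as placing a rook in the $(i+1)$-st box of the column, counted from the bottom. The $i$ boxes below the rook are $e$-boxes and carry $q$. The boxes above it are $t$-boxes and carry $p$. The $s$ new copies of $Y$ created by $Y^sZ_p$ correspond to the $s$-row creation rule, since they add $s$ rows to every column further to the left. The term $q^jY^jX$ corresponds to leaving the column empty, with all $j$ boxes being $e$-boxes.

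The induction runs as follows. Assume the normal ordered form of the suffix is $\sum_k \mathscr{R}_{s,h;p,q}[B',k]\,Y^{N+(s-1)k}Z_p^kX^{M-k}$. Prepending $Y$ multiplies on the left and simply enlarges the columns; we must check that the weights stay consistent. Prepending $X$ requires moving it past $Y^{N+(s-1)k}$ using the local identity, and then past $Z_p^k$, which produces a factor $p^k$. Each rook already placed to the right owns one row in the height $N+(s-1)k$, and each such row, read in the new column, is a box to the left of a rook. This is where we must verify that the new column's boxes split correctly: $\ell$-boxes carry $p$, $t$-boxes carry $p$, $e$-boxes carry $q$, and cancelled boxes carry weight $1$ and are cancelled by an interplay of the $p^k$ factor with the $q$-shifts.

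We expect the main obstacle to be this bookkeeping of exponents in the $X$-step. It must distinguish rows belonging to earlier rooks (which give $\ell$-boxes or cancelled boxes) from fresh rows. It must also make the total exponent of $p$ agree exactly with $\#t+\#\ell$. We would first check small cases, such as $XYXY$ with $s=1,2$, against Figure~\ref{FigFileSRook} to pin down the convention. Once the single-column weight factorization is established, multiplicativity over columns follows, and summing over placements gives \eqref{pqrookNOS}.
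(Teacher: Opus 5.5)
Your strategy is the natural one: build the word from the right one letter at a time, and read each prepended $X$ as a new leftmost column that receives at most one rook. The paper itself only quotes the theorem from \cite{TLM2}, so there is no internal proof to compare against. However, the step you postpone as ``the main obstacle'' is the whole content of the theorem, and the ingredients you give for it are wrong, so the argument does not close as written.

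First, your local identity double-counts $p$. Pushing $X$ through $Y^j$ produces $q^iY^i(hY^sZ_p)Y^{j-1-i}$ with no $p$ yet; only moving $Z_p$ past $Y^{j-1-i}$ contributes $p^{j-1-i}$. The correct identity is $XY^j=q^jY^jX+h[j]_{p,q}Y^{j+s-1}Z_p$ with $[j]_{p,q}=\sum_{i=0}^{j-1}q^ip^{j-1-i}$. With your $p^{2(j-1-i)}$, every $t$-box would carry $p^2$, contradicting your own reading.

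Second, your row count is off: the rows owned by the $k$ earlier rooks are \emph{not} among the $a=N+(s-1)k$ rows of $Y^{a}$. In the extended board the new column has $N+sk$ boxes, namely $k$ boxes lying in the rows of earlier rooks and $a$ free boxes. If the rook rows were inside the $a$, the new rook would have only $a-k$ admissible positions, which cannot match the $a$ summands of $[a]_{p,q}$.

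What closes the induction is
\begin{equation*}
X\,Y^{a}Z_p^{k}X^{M-k}=q^{a}p^{k}\,Y^{a}Z_p^{k}X^{M-k+1}+h\sum_{i=0}^{a-1}q^{i}p^{a-1-i}\,Y^{a+s-1}Z_p^{k+1}X^{M-k},
\end{equation*}
together with the observation that $p^k$ (from $XZ_p^k=p^kZ_p^kX$) appears only in the first term. In the contraction terms, $X$ is consumed before it reaches $Z_p^k$. The column weights then match as follows.
\begin{itemize}
\item With no rook in the new column, its $k$ rook-row boxes are $\ell$-boxes (giving $p^k$) and its $a$ free boxes are $e$-boxes (giving $q^a$).
\item With a rook in the $(i+1)$-st free box from the bottom, the $k$ rook-row boxes are cancelled (weight $1$; no ``interplay with $q$-shifts'' is involved). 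The $a-1-i$ free boxes above it are $t$-boxes, and the $i$ below are $e$-boxes.
\end{itemize}

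You also need to state why the weights multiply over columns. The class of a box depends only on the rook in its own column and on rooks to its right in its row, so adding a column on the left never changes the class of an existing box.

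A minor point: prepending $Y$ does not enlarge any existing column, since it lies to the left of every $X$. It only raises $N$ and leaves the coefficients unchanged.
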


We conclude this section by introducing some additional notations from \cite{TLM1} to simplify the resulting formulas. For ${\bf m}:=(m_1, \ldots, m_r)\in\mathbb{N}^r_0$ and ${\bf n}:=(n_1, \ldots, n_r)\in\mathbb{N}^r_0$, we abbreviate $({\bf 0},{\bf m})_r$ to be the set of all ${\bf k}:=(k_1,\ldots,k_r) \in \mathbb{N}_0^r$ such that $0 \leq k_j \leq m_j $ for $j=1,\ldots,r$. Furthermore, let us introduce
\begin{align*}
	{\mathcal Q}_{r|s}({\bf m},{\bf n},{\bf k}):=&\sum_{j=1}^r (m_j-k_j)\left\{\sum_{l=1}^{j}n_l+(s-1)\sum_{l=1}^{j-1}k_l\right\},\\
	{\mathcal P}_r({\bf m},{\bf k}):=& \sum_{j=1}^{r-1}(m_{j+1}-k_{j+1})\sum_{l=1}^{j}k_l, \\
	{\mathcal A}_{r|s;p,q}({\bf n},{\bf k}):=& \prod_{j=1}^r \prod_{\ell =0}^{k_j-1}\left[\sum_{l=1}^{j}n_l+(s-1)\sum_{l=1}^{j-1}k_l+(s-1) \ell \right]_{p, q}.
\end{align*}

The following theorem from \cite{TLM1} plays a crucial rule in our study of binomial formula.
	\begin{theorem}\label{thmword} Let $X, Y$ and $Z_p$ be variables satisfying the commutation relations \eqref{pqdefgenWeyl} of the $(p, q)$-deformed generalized Weyl algebra $A_{s;h|p,q}$. Then the normal ordered form of an arbitrary word $\omega=X^{m_r}Y^{n_r}\cdots X^{m_1}Y^{n_1}$, $m_k,n_k \in \mathbb{N}_0$,  is given by
		\begin{equation}\label{Main}
			\omega =\sum_{{\bf k}  \in ({\bf 0},{\bf m})_r} \bigg\{ h^{|{\bf k}|} p^{{\mathcal P}_r({\bf m},{\bf k})}  q^{{\mathcal Q}_{r|s}({\bf m},{\bf n},{\bf k})}  {\bf m\choose \bf k}_{p, q^{s-1}}  {\mathcal A}_{r|s;p,q}({\bf n},{\bf k}) \bigg\} Y^{|{\bf n}|+(s-1)|{\bf k}|}Z_{p}^{|{\bf k}|}X^{|{\bf m}|-|{\bf k}|}.
		\end{equation}
	\end{theorem}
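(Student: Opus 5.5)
The proof goes by induction on the number $r$ of blocks. The base case $r=1$ is itself obtained from a single commutation lemma by a second induction. Throughout I only use the defining relations \eqref{pqdefgenWeyl}; in particular $Z_pY^n=p^nY^nZ_p$ and $XZ_p^k=p^kZ_p^kX$.

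\emph{Step 1 (one $X$ past $Y^n$).} I first show by induction on $n$ that
$$XY^n=q^nY^nX+h[n]_{p,q}\,Y^{n+s-1}Z_p .$$
For the inductive step, write $XY^{n+1}=(XY^n)Y$ and apply $XY=qYX+hY^sZ_p$ to the first term. In the second term, $Z_pY=pYZ_p$ produces the factor $p$. This gives the coefficient $q^n+p[n]_{p,q}=[n+1]_{p,q}$, as required.

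\emph{Step 2 (the case $r=1$).} I prove by induction on $m$ that
$$X^mY^n=\sum_{k=0}^m h^k q^{(m-k)n}{m\choose k}_{p,q^{s-1}}\prod_{\ell=0}^{k-1}[n+(s-1)\ell]_{p,q}\;Y^{n+(s-1)k}Z_p^kX^{m-k}.$$
This is exactly \eqref{Main} for $r=1$, since there ${\mathcal P}_1=0$ and ${\mathcal Q}_{1|s}=(m-k)n$. For the inductive step, multiply by $X$ on the left. Then move $X$ past $Y^{n+(s-1)k}$ using Step 1 and past $Z_p^k$, which contributes $p^k$. A term with index $k$ thus arises from index $k$ (a factor $q^{n+(s-1)k}$) and from index $k-1$ (a factor $h[n+(s-1)(k-1)]_{p,q}\,p^{k-1}$).

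Comparing coefficients reduces the claim to a Pascal-type identity for ${m\choose k}_{p,q^{s-1}}$ with specific powers of $p$ and $q^{s-1}$. The appropriate recursion is
$${m+1\choose k}_{p,Q}=Q^{k}{m\choose k}_{p,Q}+p^{m+1-k}{m\choose k-1}_{p,Q}$$
or its mirror version, with $Q=q^{s-1}$. The bookkeeping of these exponents is the step I expect to be the main obstacle. I would fix which of the two $(p,Q)$-Pascal rules matches by checking $m=1,2$. If the exponents do not align directly, I would instead place the new $X$ on the right, i.e.\ use $X^{m+1}Y^n=X^m(XY^n)$, and expand $X^mY^{n+s-1}Z_p$ by the inductive hypothesis. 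That route is the natural one for producing the $p^{m-k}$-type powers.

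\emph{Step 3 (induction on $r$).} Write $\omega=X^{m_r}Y^{n_r}\,\omega'$ and apply the inductive hypothesis to $\omega'$. This gives terms $Y^{N}Z_p^{K}X^{M}$ with
$$N=\sum_{l<r}n_l+(s-1)\sum_{l<r}k_l,\qquad K=\sum_{l<r}k_l .$$
Apply Step 2 to $X^{m_r}Y^{n_r+N}$ with summation index $k_r$. Then commute $X^{m_r-k_r}$ past $Z_p^{K}$, which yields $p^{(m_r-k_r)K}$.

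These factors are exactly the increments needed:
\begin{itemize}[label={}]
\item the increment of ${\mathcal P}_r$ over ${\mathcal P}_{r-1}$ is $(m_r-k_r)\sum_{l<r}k_l$;
\item the increment of ${\mathcal Q}_{r|s}$ is $(m_r-k_r)(n_r+N)$;
\item the new block of ${\mathcal A}_{r|s;p,q}$ is $\prod_{\ell<k_r}[n_r+N+(s-1)\ell]_{p,q}$;
\item the binomial factor ${m_r\choose k_r}_{p,q^{s-1}}$ joins the multi-binomial.
\end{itemize}
The exponents of $Y$, $Z_p$ and $X$ add up to those in \eqref{Main}. Summing over ${\bf k}\in({\bf 0},{\bf m})_r$ completes the induction.
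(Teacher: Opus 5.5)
Your Steps 1 and 3 are correct, but Step 2 cannot be completed. The Pascal-rule bookkeeping you flagged is exactly where it breaks, because the $r=1$ identity you are trying to prove is false whenever $p\neq 1$ and $s\neq 1$. The paper only quotes this theorem from earlier work, so there is no proof in it to compare against.

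First, your left-multiplication recursion attaches the power of $p$ to the wrong branch. In $X\cdot Y^{N}Z_p^{k}X^{m-k}$ with $N=n+(s-1)k$, it is the branch $q^{N}Y^{N}X$ that must pass $Z_p^{k}$. That branch picks up $p^{k}$ and keeps the index $k$. The branch $h[N]_{p,q}Y^{N+s-1}Z_p$ has no $X$ left to move and carries no power of $p$. Write the coefficient of $h^kY^{n+(s-1)k}Z_p^kX^{m-k}$ in $X^mY^n$ as $q^{(m-k)n}B_{m,k}\prod_{\ell<k}[n+(s-1)\ell]_{p,q}$. Then the correct recursion is $B_{m+1,k}=(pq^{s-1})^{k}B_{m,k}+B_{m,k-1}$. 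Your fallback route $X^{m+1}Y^n=X^m(XY^n)$ gives $B_{m+1,k}=B_{m,k}+(pq^{s-1})^{m+1-k}B_{m,k-1}$. Both are the Pascal rules of the ordinary Gaussian binomial in the single base $pq^{s-1}$, so $B_{m,k}={m\choose k}_{1,pq^{s-1}}=p^{k(m-k)}{m\choose k}_{p^{-1},q^{s-1}}$. No $(p,Q)$-Pascal rule can be made to fit: already $B_{2,1}=1+pq^{s-1}$, while ${2\choose 1}_{p,q^{s-1}}=p+q^{s-1}$, and these differ by $(1-p)(1-q^{s-1})$.

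A direct computation confirms this. For $s=0$ the relations give $X^2Y=X(qYX+hZ_p)=q^2YX^2+h(p+q)Z_pX$. By contrast, \eqref{Main} with $r=1$, $m_1=2$, $n_1=1$ predicts the middle coefficient $hq(p+q^{-1})=h(1+pq)$. Applying \eqref{Main} to the same word written as $XY^0XY^1$, i.e.\ ${\bf m}=(1,1)$, ${\bf n}=(1,0)$, gives $h(p+q)$, so the displayed formula is not even self-consistent. As a further check, take the realization $X=D_{p,q}$, $Y=x$, $Z_p=\epsilon_p$ with $h=1$: one gets $D_{p,q}^2(x\cdot x)=[2]_{p,q}=p+q$, not $1+pq$.

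Once the recursion is fixed, your three steps establish \eqref{Main} with ${\bf m\choose \bf k}_{p,q^{s-1}}$ replaced by $\prod_{j}{m_j\choose k_j}_{1,pq^{s-1}}$. Your Step 3 bookkeeping carries over verbatim: the factor $p^{(m_r-k_r)K}$, the ${\mathcal Q}$-increment and the new ${\mathcal A}$-block are all right. The two versions coincide when $p=1$, when $s=1$ (because ${m\choose k}_{p,1}={m\choose k}_{1,p}$), and whenever all $m_j\in\{0,1\}$. The last case is the only one used in Section~\ref{bfm}, so the paper's binomial results are not affected.
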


Note that the expressions $\mathrm{w}_{{\bf m}, {\bf n}}=Y^{n_r}X^{m_r}\cdots Y^{n_1}X^{m_1}$ used in Theorem~\ref{TheoWordNOS} and $\omega=X^{m_r}Y^{n_r}\cdots X^{m_1}Y^{n_1}$ used in Theorem~\ref{thmword} are equivalent since in both expressions the value 0 is allowed for the exponents. Furthermore, if $\omega=X^{m_r}Y^{n_r}\cdots X^{m_1}Y^{n_1}$ is given with associated Ferrers board $B_{\omega}$ and $(p,q)$-deformed $s$-rook numbers $\mathscr{R}_{s,h; p,q}[B_{\omega}, k]$, then for all words $\omega (n,m):=Y^n\omega X^m$, $m,n \in \mathbb{N}_0$, one has $\mathscr{R}_{s,h; p,q}[B_{\omega (n,m)}, k]=\mathscr{R}_{s,h; p,q}[B_{\omega}, k]$. 
	
\section{The binomial formula $(X+Y)^m$\label{bfm}}
In this section, we consider the binomial $(X+Y)^m$ where $X$ and $Y$ satisfy \eqref{pqdefgenWeyl}. We first derive some general results before we consider the special cases $s=0,1,2,3$ in more detail. For more on the binomial formula in noncommuting variables see \cite{TMMS2011} or the review \cite{MS2021} and the references given therein. Another recent approach can be found in \cite{HJYZ2023}.

\subsection{General results}
For this, we first expand the binomial into a sum over all binary words over the alphabet $\{X,Y\}$ of length $m$, and then we use for each resulting word Theorem~\ref{thmword}. To start, let us recall the following result \cite[Proposition 3.3]{TMMS2018}.
\begin{theorem}\label{thr1}
There exists a bijection between the Young diagrams in the set $\mathcal{I}_m$  and the binary words over the alphabet $\{X,Y\}$ of length $m$.
\end{theorem}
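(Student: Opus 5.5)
We construct the bijection explicitly by reading the boundary of a Young diagram as a lattice path. Take $\lambda\in\mathcal{I}_{m-k,k}$, so its Young diagram fits in a box with $k$ columns and $m-k$ rows. Its lower-right boundary, running from the bottom-left corner of the box to its top-right corner, is a path of $m$ unit steps. Exactly $k$ of these steps are horizontal and $m-k$ are vertical. Record each horizontal step as a letter $X$ and each vertical step as a letter $Y$; reading the path in a fixed direction gives a word $\omega_\lambda$ of length $m$ with $k$ letters $X$ and $m-k$ letters $Y$. Since the columns of $Y_\lambda$ are aligned at the top and have lengths $\lambda_1\ge\lambda_2\ge\cdots$, the number of vertical steps made before the $i$th horizontal step is determined by $\lambda_i$. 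Equivalently, the entries $\lambda_i$ count the $Y$'s lying on one fixed side of the $i$th $X$. This matches the Ferrers board $B_\omega$ attached to a word in Theorem~\ref{TheoWordNOS}.

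For the inverse, start from a binary word $\omega$ of length $m$ with $k$ letters $X$. For the $i$th $X$, counted from the appropriate end, let $\lambda_i$ be the number of $Y$'s on the chosen side of it. Because the positions of the $X$'s move monotonically along the word, the sequence $\lambda_1\ge\cdots\ge\lambda_k$ is weakly decreasing. Each $\lambda_i$ satisfies $0\le\lambda_i\le m-k$. Hence $\lambda$, with zeros allowed as in Figure~\ref{fig1}, lies in $\mathcal{I}_{m-k,k}$.

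The two maps are mutually inverse. A weakly decreasing sequence of $k$ values in $[0,m-k]$ determines exactly how the $Y$'s interleave with the $X$'s, and conversely the interleaving determines the sequence. This gives a bijection $\mathcal{I}_{m-k,k}\to\{\text{words with }k\text{ letters }X\text{ and } m-k \text{ letters } Y\}$ for every $k$. Taking the disjoint union over $0\le k\le m$ yields the bijection between $\mathcal{I}_m$ and all binary words of length $m$. As a consistency check, both sides have cardinality $\sum_k\binom mk=2^m$.

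The only real care needed is bookkeeping. The convention must be fixed as columns top-aligned, counting $Y$'s to the right of each $X$, and reading order matching $\mathrm{w}_{{\bf m},{\bf n}}$, so that the bijection agrees with the board $B_\omega$ used in Theorem~\ref{TheoWordNOS}. The zero-padding of partitions must also be handled so that each $\mathcal{I}_{m-k,k}$ is labelled by exactly $k$ entries. Beyond this, the argument is the standard lattice-path correspondence.
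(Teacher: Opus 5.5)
Your proposal is correct and is essentially the paper's argument. With your final convention, $\lambda_i$ is the number of $Y$'s to the right of the $i$th $X$ counted from the left, which is exactly the decomposition $\pi=Y^{m-\ell-\lambda_1}XY^{\lambda_1-\lambda_2}\cdots XY^{\lambda_{\ell-1}-\lambda_\ell}XY^{\lambda_\ell}$ of \eqref{eqbij1}, and your inverse, with gaps $\lambda_i-\lambda_{i+1}$ between consecutive $X$'s, is the paper's inverse map; the lattice-path picture and the remark about the board $B_\omega$ of Theorem~\ref{TheoWordNOS} are just a geometric rephrasing that the bare existence statement does not need.
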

We briefly recall the construction of this bijection. Let $\pi=\pi_1\pi_2\cdots\pi_m$ be any word over the alphabet $\{X,Y\}$ of length $m$. Let $\ell$ be the number of occurrences of the letter $X$ in $\pi$. Then the number of occurrences of the letter $Y$ in $\pi$ is given by ($m-\ell$). The numbers $\lambda_j$, for $1\leq j \leq \ell$, are defined by
\begin{align}\label{eqbij1}
\pi=Y^{m-\ell-\lambda_1}XY^{\lambda_1-\lambda_2}\cdots XY^{\lambda_{\ell-1}-\lambda_\ell}XY^{\lambda_\ell}.
\end{align}
By construction, $\lambda_\ell\geq 0$, $\lambda_j \geq \lambda_{j+1}$ (for $j=1,\ldots , \ell-1$), and $\lambda_1 \leq (m-\ell$), thus $\lambda=(\lambda_1,\lambda_2,\ldots,\lambda_\ell)\equiv \lambda_1\lambda_2\cdots\lambda_\ell$ is a partition with $Y_\lambda\in\mathcal{I}_{m-\ell,\ell}$. On the other hand, if $Y_\lambda$ is a Young diagram in $\mathcal{I}_{m-\ell,\ell}\subset\mathcal{I}_m$, then we define the word $\pi=\pi_1\pi_2\cdots\pi_m$ by \eqref{eqbij1}. Thus, $\pi$ is a binary word over the alphabet $\{X,Y\}$ of length $m$ with exactly $\ell$ letters of $X$ and exactly ($m-\ell$) letters of $Y$. Hence, we can map the set of all binary words over the alphabet $\{X,Y\}$ of length $m$ with exactly $\ell$ letters of $X$ and exactly ($m-\ell$) letters of $Y$ bijectively to the set $\mathcal{I}_{m-\ell,\ell}$. Clearly, this is a bijection between the set of all binary words over the alphabet $\{X,Y\}$ of length $m$ and the set $\mathcal{I}_m$.

\begin{example} In Figure~\ref{fig1}, from top to bottom and left to right, the words $\pi$ that represent the Young diagrams are given by $XXXX$, $YXXX$, $XYXX$, $XXYX$, $XXXY$, $YYXX$, $YXYX$, $YXXY$, $XYYX$, $XYXY$, $XXYY$, $YYYX$, $YYXY$, $YXYY$, $XYYY$, $YYYY$.
\end{example}

Now, by expanding the binomial $(X+Y)^m$, we get a sum over all binary words of length $m$ in $X$ and $Y$. Thus, by Theorem~\ref{thr1} and, in particular \eqref{eqbij1}, we can write
\begin{equation}\label{binomgen}
(X+Y)^m=\sum_{\ell=0}^m \sum_{\lambda \in \mathcal{I}_{m-\ell,\ell}}Y^{m-\ell-\lambda_1}XY^{\lambda_1-\lambda_2}\cdots XY^{\lambda_{\ell-1}-\lambda_\ell}XY^{\lambda_\ell}.
\end{equation}
Each summand is a word in $X$ and $Y$ of the form \eqref{wordxy} considered in Theorem~\ref{thmword}. Let us write, for $\lambda \in \mathcal{I}_{m-\ell,\ell}$,
$$
\omega_{\lambda}=XY^{\lambda_1-\lambda_2}\cdots XY^{\lambda_{\ell-1}-\lambda_\ell}XY^{\lambda_\ell}.
$$
Then we can write \eqref{binomgen} in the equivalent brief form
\begin{equation}\label{binomgen2}
(X+Y)^m=\sum_{\ell=0}^m \sum_{\lambda \in \mathcal{I}_{m-\ell,\ell}}Y^{m-\ell-\lambda_1}\omega_{\lambda},
\end{equation}
and it is clear that the main work consists in normal ordering $\omega_{\lambda}$. Note that the words $\omega_{\lambda}$ have a particular structure when compared to the general case. Here all $m_j=1$, for $j=1,\ldots,\ell$, thus $|{\bf m}|=|(1,1,\ldots,1)|=\ell$. Similarly, summing all exponents of the variables $Y$ yields $|{\bf n}_{\lambda}|=|(\lambda_1-\lambda_2,\lambda_2-\lambda_3,\ldots,\lambda_{\ell-1}-\lambda_{\ell},\lambda_{\ell} )|=\lambda_1$. Thus, in the summands -- see \eqref{Main} -- we get factors of
$$
Y^{|{\bf n}_{\lambda}|+(s-1)|{\bf k}|}Z_{p}^{|{\bf k}|}X^{|{\bf m}|-|{\bf k}|} \rightsquigarrow Y^{\lambda_1+(s-1)|{\bf k}|}Z_{p}^{|{\bf k}|}X^{\ell-|{\bf k}|}.
$$
Since here $m_j=1$, for $j=1,\ldots, \ell$ (i.e., ${\bf m}={\bf 1}$), we write more concretely ${\bf k} \in \{0,1\}^{\ell}$ instead of ${\bf k} \in ({\bf 0},{\bf 1})_{\ell}$ from above and obtain
\begin{equation}\label{Omega}
\omega_{\lambda}=\sum_{{\bf k} \in \{0,1\}^{\ell}} \bigg\{ h^{|{\bf k}|} p^{{\mathcal P}_{\ell}({\bf 1},{\bf k})}  q^{{\mathcal Q}_{\ell|s}({\bf 1},{\bf n}_{\lambda},{\bf k})}  {\bf 1\choose \bf k}_{p, q^{s-1}}  {\mathcal A}_{\ell|s;p,q}({\bf n}_{\lambda},{\bf k}) \bigg\} Y^{\lambda_1+(s-1)|{\bf k}|}Z_{p}^{|{\bf k}|}X^{\ell-|{\bf k}|}.
\end{equation}
As a first observation, recall that, for all ${\bf k} \in \{0,1\}^{\ell}$, one has ${\bf 1\choose \bf k}_{p, q^{s-1}} =1$. For the case at hand, we also introduce
\begin{align*}
{\mathcal Q}_{r|s}^{\ast}({\bf n},{\bf k}):=&{\mathcal Q}_{r|s}({\bf 1},{\bf n},{\bf k})=\sum_{j=1}^r (1-k_j)\left\{\sum_{l=1}^{j}n_l+(s-1)\sum_{l=1}^{j-1}k_l\right\},\\
{\mathcal P}_r^{\ast}({\bf k}):=&{\mathcal P}_r({\bf 1},{\bf k}) =\sum_{j=1}^{r-1}(1-k_{j+1})\sum_{l=1}^{j}k_l.
\end{align*}
We can now formulate the main result of this section.
\begin{theorem}\label{thmbinomial} Let $X, Y$ and $Z_p$ be variables satisfying the commutation relations \eqref{pqdefgenWeyl} of the $(p, q)$-deformed generalized Weyl algebra $A_{s;h|p,q}$. Then the binomial $(X+Y)^m$, $m \in \mathbb{N}$, can be written in normal ordered form as
\begin{equation}\label{BinomMain}
(X+Y)^m=\sum_{\ell=0}^m \sum_{k=0}^{\ell} h^{k} {\mathcal J}_{s,p,q}(m,\ell,k) \,Y^{m-\ell+(s-1)k}Z_{p}^{k}X^{\ell-k},
\end{equation}
where ${\mathcal J}_{s,p,q}(m,0,k)=1$ and, for $\ell \neq 0$, 
\begin{equation}\label{eqBinoMain}
{\mathcal J}_{s,p,q}(m,\ell,k)=\sum\limits_{{\bf k}\in \{0,1\}^{\ell} \atop |{\bf k}|=k}  p^{{\mathcal P}_{\ell}^{\ast}({\bf k})}\sum\limits_{\lambda \in \mathcal{I}_{m-\ell,\ell}}  q^{{\mathcal Q}_{\ell|s}^{\ast}({\bf n}_{\lambda},{\bf k})} {\mathcal A}_{\ell|s;p,q}({\bf n}_{\lambda},{\bf k}),
\end{equation}
where ${\bf n}_{\lambda}=(\lambda_1-\lambda_2,\lambda_2-\lambda_3,\ldots,\lambda_{\ell-1}-\lambda_{\ell},\lambda_{\ell} )$ for $\lambda \in \mathcal{I}_{m-\ell,\ell}$.
\end{theorem}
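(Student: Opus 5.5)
The plan is to start from the expansion \eqref{binomgen2}, which writes $(X+Y)^m$ as a sum over $\ell$ and over $\lambda\in\mathcal{I}_{m-\ell,\ell}$ of $Y^{m-\ell-\lambda_1}\omega_\lambda$. This expansion is justified by the bijection of Theorem~\ref{thr1}. Each $\omega_\lambda$ is then normal ordered using Theorem~\ref{thmword}, specialised to ${\bf m}={\bf 1}$. This specialisation is exactly \eqref{Omega}, where ${\bf 1\choose \bf k}_{p,q^{s-1}}=1$, and the exponents are ${\mathcal P}^{\ast}_\ell({\bf k})$ and ${\mathcal Q}^{\ast}_{\ell|s}({\bf n}_\lambda,{\bf k})$. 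The case $\ell=0$ is separated out first. Here $\mathcal{I}_{m,0}$ contains only the empty partition, the word is $Y^m$, and only $k=0$ occurs, which gives ${\mathcal J}_{s,p,q}(m,0,0)=1$ and the term $Y^m$.

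For $\ell\ge1$, I would multiply \eqref{Omega} on the left by $Y^{m-\ell-\lambda_1}$. Since the normal ordered monomials already begin with powers of $Y$, no reordering is needed. The power of $Y$ just adds up:
$$Y^{m-\ell-\lambda_1}\,Y^{\lambda_1+(s-1)|{\bf k}|}Z_p^{|{\bf k}|}X^{\ell-|{\bf k}|}=Y^{m-\ell+(s-1)|{\bf k}|}Z_p^{|{\bf k}|}X^{\ell-|{\bf k}|}.$$
The resulting monomial depends on $\lambda$ and ${\bf k}$ only through $\ell$ and $k=|{\bf k}|$. So I would exchange the finite sums over $\lambda$ and ${\bf k}$ and group the vectors ${\bf k}\in\{0,1\}^\ell$ by $k=|{\bf k}|$, for $0\le k\le\ell$. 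The factor $h^{|{\bf k}|}=h^k$ comes out of the sum. The factor $p^{{\mathcal P}^{\ast}_\ell({\bf k})}$ does not depend on $\lambda$, so it can be pulled outside the inner sum over $\lambda$. The coefficient of $h^kY^{m-\ell+(s-1)k}Z_p^kX^{\ell-k}$ is then exactly \eqref{eqBinoMain}, and this gives \eqref{BinomMain}.

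The main point requiring care is the check that Theorem~\ref{thmword} really applies to $\omega_\lambda=XY^{\lambda_1-\lambda_2}\cdots XY^{\lambda_\ell}$ with the indexing ${\bf n}_\lambda=(\lambda_1-\lambda_2,\ldots,\lambda_\ell)$. Theorem~\ref{thmword} is stated for $X^{m_r}Y^{n_r}\cdots X^{m_1}Y^{n_1}$, so the rightmost block carries index $1$. This means $n_1$ should be $\lambda_\ell$ and $n_\ell$ should be $\lambda_1-\lambda_2$. The ordering must therefore be matched consistently: either ${\bf n}_\lambda$ is read in the reversed sense, or one notes that the paper's convention in \eqref{Omega} fixes it. I would verify that the sum of the $n_j$ equals $\lambda_1$ under either reading, so the monomial is unaffected. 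The coefficient, however, uses partial sums $\sum_{l\le j}n_l$, so the indexing must follow \eqref{Omega} exactly as stated. Beyond this bookkeeping, the argument is a direct regrouping of finite sums, and I expect no further obstacle.
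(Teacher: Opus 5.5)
Your proposal is correct and follows the paper's proof essentially verbatim. Like the paper, you insert \eqref{Omega} into \eqref{binomgen2}, use ${\bf 1\choose \bf k}_{p,q^{s-1}}=1$, interchange the finite sums, and group ${\bf k}$ by $|{\bf k}|=k$. The indexing concern you raise is a genuine notational slip: Theorem~\ref{thmword}'s convention forces $n_1=\lambda_\ell$, which is how the paper later uses it via $\sum_{l\le j}n_l=\lambda_{\ell-j+1}$. It cannot affect ${\mathcal J}_{s,p,q}(m,\ell,k)$, however, because $\lambda\mapsto{\bf n}_\lambda$ is a bijection from $\mathcal{I}_{m-\ell,\ell}$ onto $\{{\bf n}\in\mathbb{N}_0^{\ell}:|{\bf n}|\le m-\ell\}$, this set is closed under reversal, and reversal preserves $|{\bf n}_\lambda|=\lambda_1$.
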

\begin{proof}
Inserting \eqref{Omega} into \eqref{binomgen2}, one obtains
$$
(X+Y)^m=\sum_{\ell=0}^m \sum_{\lambda \in \mathcal{I}_{m-\ell,\ell}}\sum_{{\bf k}\in \{0,1\}^{\ell}} \bigg\{ h^{|{\bf k}|} p^{{\mathcal P}_{\ell}^{\ast}({\bf k})}  q^{{\mathcal Q}_{\ell|s}^{\ast}({\bf n}_{\lambda},{\bf k})} {\mathcal A}_{\ell|s;p,q}({\bf n}_{\lambda},{\bf k}) \bigg\} \,Y^{m-\ell+(s-1)|{\bf k}|}Z_{p}^{|{\bf k}|}X^{\ell-|{\bf k}|},
$$
where we have used ${\bf 1\choose \bf k}_{p, q^{s-1}} =1$ and the definitions of the functions ${\mathcal P}_r^{\ast}({\bf k})$ and ${\mathcal Q}_{r|s}^{\ast}({\bf n},{\bf k})$ above. Changing the order of summation, we have
$$
(X+Y)^m=\sum_{\ell=0}^m \sum_{{\bf k}\in \{0,1\}^{\ell}}h^{|{\bf k}|} p^{{\mathcal P}_{\ell}^{\ast}({\bf k})}\bigg\{  \sum_{\lambda \in \mathcal{I}_{m-\ell,\ell}}  q^{{\mathcal Q}_{\ell|s}^{\ast}({\bf n}_{\lambda},{\bf k})} {\mathcal A}_{\ell|s;p,q}({\bf n}_{\lambda},{\bf k}) \bigg\} \,Y^{m-\ell+(s-1)|{\bf k}|}Z_{p}^{|{\bf k}|}X^{\ell-|{\bf k}|}.
$$
Splitting the sum $\sum_{{\bf k}\in \{0,1\}^{\ell}}=\sum_{k=0}^{\ell}\sum_{{\bf k}\in \{0,1\}^{\ell}, |{\bf k}|=k}$, and applying the definition of ${\mathcal J}_{s,p,q}(m,\ell,k)$, this yields the assertion.
\end{proof}

\begin{example} Let us consider $m=1$ and $m=2$ and compare \eqref{BinomMain} to the explicit computation using the commutation relations \eqref{pqdefgenWeyl}. Let us start with $m=1$. Clearly, in this case no commutation relation is used and it remains to check that the right-hand side of \eqref{BinomMain} reduces to $X+Y$. In fact,
$$
{\mathcal J}_{s,p,q}(1,0,0)=1, \,\,\, {\mathcal J}_{s,p,q}(1,1,0)=1, \,\,\, {\mathcal J}_{s,p,q}(1,1,1)=0.
$$
It follows that
\begin{align*}
\sum_{\ell =0}^{1}\sum_{k=0}^{\ell}h^{k}{\mathcal J}_{s,p,q}(1,\ell,k)Y^{1-\ell+(s-1)k}Z_{p}^{k}X^{\ell-k}&={\mathcal J}_{s,p,q}(1,0,0)Y+{\mathcal J}_{s,p,q}(1,1,0)X+h{\mathcal J}_{s,p,q}(1,1,1)Y^{s-1}Z_{p}\\
&=Y+X,
\end{align*}
as expected. Now, for the case $m=2$ and using \eqref{eqBinoMain}, we have
$$
{\mathcal J}_{s,p,q}(2,2,1)=0, \,\,\,{\mathcal J}_{s,p,q}(2,2,2)=0, \,\,\, {\mathcal J}_{s,p,q}(2,2,0)=1, \,\,\,{\mathcal J}_{s,p,q}(2,1,1)=1, \,\,\, {\mathcal J}_{s,p,q}(2,1,0)=1+q.
$$
The right-hand side of \eqref{BinomMain} is then given by
\begin{align*}
\sum_{\ell =0}^{2}\sum_{k=0}^{\ell}h^k{\mathcal J}_{s,p,q}(2,\ell,k)Y^{2-\ell+(s-1)k}Z_{p}^{k}X^{\ell-k}=&{\mathcal J}_{s,p,q}(2,0,0)Y^2+{\mathcal J}_{s,p,q}(2,1,0)YX+h{\mathcal J}_{s,p,q}(2,1,1)Y^{s}Z_{p}\\
&\hspace{-2.5cm}+{\mathcal J}_{s,p,q}(2,2,0)X^2+h{\mathcal J}_{s,p,q}(2,2,1)Y^{s-1}Z_{p}X+h^2{\mathcal J}_{s,p,q}(2,2,2)Y^{2(s-1)}Z_{p}^{2}\\
=&Y^2+(1+q)YX+hY^sZ_{p}+X^2.
\end{align*}
Using \eqref{pqcr1}, this equals $X^2+XY+YX+X^2$ -- which is the left-hand side of \eqref{BinomMain} for $m=2$. 
\end{example}
By changing the order of summation, we can write \eqref{eqBinoMain} equivalently as
\begin{equation}\label{eqBinoMain2}
{\mathcal J}_{s,p,q}(m,\ell,k)=\sum\limits_{\lambda \in \mathcal{I}_{m-\ell,\ell}}\left\{ \sum\limits_{{\bf k}\in \{0,1\}^{\ell} \atop |{\bf k}|=k}  p^{{\mathcal P}_{\ell}^{\ast}({\bf k})}  q^{{\mathcal Q}_{\ell|s}^{\ast}({\bf n}_{\lambda},{\bf k})} {\mathcal A}_{\ell|s;p,q}({\bf n}_{\lambda},{\bf k})\right\}.
\end{equation}
We will consider the case $p=q=1$ for different values of $s$, so we mention the case of arbitrary $s$ explicitly.
\begin{corollary} Let $p=q=1$. Then one has 
\begin{equation}\label{SNormalCo}
{\mathcal J}_{s,1,1}(m,\ell,k)=\sum\limits_{\lambda \in \mathcal{I}_{m-\ell,\ell}} \sum\limits_{{\bf k}\in \{0,1\}^{\ell} \atop |{\bf k}|=k}\prod_{j=1}^{\ell}\left(\lambda_{\ell - j+1}+(s-1)(k_1+\cdots + k_{j-1}))\right)^{k_j}.
\end{equation}
\end{corollary}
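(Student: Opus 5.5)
Starting from \eqref{eqBinoMain2}, set $p=q=1$. Then $p^{{\mathcal P}_{\ell}^{\ast}({\bf k})}=1$ and $q^{{\mathcal Q}_{\ell|s}^{\ast}({\bf n}_{\lambda},{\bf k})}=1$, so only ${\mathcal A}_{\ell|s;1,1}({\bf n}_{\lambda},{\bf k})$ survives. Since $[n]_{1,1}=n$, the definition gives
$$
{\mathcal A}_{\ell|s;1,1}({\bf n}_{\lambda},{\bf k})=\prod_{j=1}^{\ell}\prod_{t=0}^{k_j-1}\Bigl(\sum_{l=1}^{j}n_l+(s-1)\sum_{l=1}^{j-1}k_l+(s-1)t\Bigr).
$$
Because $k_j\in\{0,1\}$, the inner product is empty (equal to $1$) when $k_j=0$, and has the single factor $t=0$ when $k_j=1$. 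Hence it equals $\bigl(\sum_{l=1}^{j}n_l+(s-1)(k_1+\cdots+k_{j-1})\bigr)^{k_j}$, with the convention $0^0=1$, which also covers the case where the base is $0$.

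Next compute the partial sums of ${\bf n}_{\lambda}$. We must be careful about indexing, because $n_1$ corresponds to the rightmost $Y$-block in Theorem~\ref{thmword}, where $\omega=X^{m_r}Y^{n_r}\cdots X^{m_1}Y^{n_1}$. Reading $\omega_\lambda=XY^{\lambda_1-\lambda_2}\cdots XY^{\lambda_\ell}$ from the right, the $j$-th block $X^{m_j}Y^{n_j}$ has $n_1=\lambda_\ell$, $n_2=\lambda_{\ell-1}-\lambda_\ell$, and in general $n_j=\lambda_{\ell-j+1}-\lambda_{\ell-j+2}$, with $\lambda_{\ell+1}:=0$. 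So the vector ${\bf n}_\lambda$ written in the theorem is listed in the opposite order to the indexing used in ${\mathcal A}$. The sum telescopes: $\sum_{l=1}^{j}n_l=\lambda_{\ell-j+1}$. Substituting this yields exactly the factor $\bigl(\lambda_{\ell-j+1}+(s-1)(k_1+\cdots+k_{j-1})\bigr)^{k_j}$ in \eqref{SNormalCo}. Summing over $\lambda\in\mathcal{I}_{m-\ell,\ell}$ and over ${\bf k}$ with $|{\bf k}|=k$ then gives the corollary.

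The only real obstacle is this indexing and orientation check: verifying that the ordering of ${\bf n}_\lambda$ matches the $j$-indexing of Theorem~\ref{thmword}, so that the partial sums produce $\lambda_{\ell-j+1}$ rather than $\lambda_1-\lambda_{j+1}$. I would confirm it on the small case $m=2$, where ${\mathcal J}_{s,1,1}(2,1,1)=1$. There $\ell=1$ and the partitions are $\lambda\in\{0,1\}$, giving the sum $0+1=1$, as required. The case $\ell=0$ is handled separately and trivially by ${\mathcal J}=1$, since the empty product and the single empty ${\bf k}$ both give $1$.
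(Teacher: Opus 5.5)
Your proof is correct and follows the same argument as the paper. You specialize \eqref{eqBinoMain2} to $p=q=1$, use $k_j\in\{0,1\}$ to collapse the inner product in ${\mathcal A}_{\ell|s;1,1}$ to a single factor, and telescope to get $\sum_{l=1}^{j}n_l=\lambda_{\ell-j+1}$. Your orientation check, that $n_1=\lambda_\ell$ because Theorem~\ref{thmword} indexes blocks from the right, is more careful than the paper, which lists ${\bf n}_\lambda$ left to right while relying on the reversed indexing; however, your $m=2$ test cannot detect the orientation, since for $\ell=1$ both readings give $\lambda_1$.
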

\begin{proof}
From \eqref{eqBinoMain2}, we find for $p=q=1$ that
\begin{equation}\label{CooExpand}
{\mathcal J}_{s,1,1}(m,\ell,k)=\sum\limits_{\lambda \in \mathcal{I}_{m-\ell,\ell}}\sum\limits_{{\bf k}\in \{0,1\}^{\ell} \atop |{\bf k}|=k} {\mathcal A}_{\ell|s;1,1}({\bf n}_{\lambda},{\bf k}).
\end{equation}
Here one has ${\bf n}_{\lambda}=(\lambda_1-\lambda_2,\lambda_2-\lambda_3,\ldots,\lambda_{\ell-1}-\lambda_{\ell},\lambda_{\ell} )$ such that $\sum_{l=1}^{j}n_l=\lambda_{\ell - j+1}$, yielding
$$
{\mathcal A}_{\ell|s;1,1}({\bf n}_{\lambda},{\bf k})=\prod_{j=1}^{\ell} \prod_{i =0}^{k_j-1}\left(\lambda_{\ell - j+1}+(s-1)\sum_{l=1}^{j-1}k_l+(s-1)i \right).
$$
Observe that here $k_j \in \{0,1\}$ for all $1\leq j \leq \ell$, so we can write the inner product equivalently as 
$$
 \prod_{i =0}^{k_j-1}\left(\lambda_{\ell - j+1}+(s-1)\sum_{l=1}^{j-1}k_l- i \right)=\left(\lambda_{\ell - j+1}+(s-1)(k_1+\cdots + k_{j-1}))\right)^{k_j}.
$$
Thus, we have found that
\begin{equation}\label{StructureA}
{\mathcal A}_{\ell|s;1,1}({\bf n}_{\lambda},{\bf k})=\prod_{j=1}^{\ell}\left(\lambda_{\ell - j+1}+(s-1)(k_1+\cdots + k_{j-1}))\right)^{k_j},
\end{equation}
showing the assertion.
\end{proof}

Recalling the definition of the $s$-rook numbers from Definition~\ref{SRook}, we can now give the sum over ${\mathcal A}_{\ell|s;1,1}({\bf n}_{\lambda},{\bf k})$ a nice interpretation.
\begin{proposition} Let $p=q=1$. Then one has from \eqref{StructureA} that
\begin{equation}\label{ConnRook}
\sum\limits_{{\bf k}\in \{0,1\}^{\ell} \atop |{\bf k}|=k}{\mathcal A}_{\ell|s;1,1}({\bf n}_{\lambda},{\bf k})=r_k^{(s)}(B_{\lambda}),
\end{equation}
where $B_{\lambda}$ is the Ferrers board which for $\lambda \in \mathcal{I}_{m-\ell,\ell}$ has columns of length $\lambda_1,\lambda_2,\ldots, \lambda_{\ell}$. In particular, for $s=0$, the right-hand side equals the conventional rook number, $r_k(B_{\lambda})$, while, for $s=1$, it equals the file number, $f_k(B_{\lambda})$. 
\end{proposition}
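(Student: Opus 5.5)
The plan is a direct bijective count. We interpret the right-hand side of \eqref{StructureA}, summed over ${\bf k}$, as a column-by-column placement of $k$ rooks on $B_{\lambda}$ under the $s$-row creation rule.

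\textbf{Step 1: an index set for placements.} Order the columns of $B_{\lambda}$ from right to left. Then column $j$ (for $j=1,\ldots,\ell$) is the column of length $\lambda_{\ell-j+1}$. Since $\lambda$ is weakly decreasing, these lengths are weakly increasing as we move leftwards, as for a Ferrers board read from right to left. A vector ${\bf k}\in\{0,1\}^{\ell}$ with $|{\bf k}|=k$ records exactly which $k$ columns receive a rook: $k_j=1$ if and only if column $j$ contains a rook. Because of the $s$-row creation rule (Definition~\ref{SRook}), every $s$-rook placement with $k$ rooks determines a unique such ${\bf k}$. So $r_k^{(s)}(B_{\lambda})$ equals the sum, over all ${\bf k}$ with $|{\bf k}|=k$, of the number of placements with occupied-column set ${\bf k}$.

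\textbf{Step 2: counting with a fixed ${\bf k}$.} We place the rooks from right to left. Suppose we have reached column $j$ with $k_j=1$. The rooks already placed, $k_1+\cdots+k_{j-1}$ of them, all lie in columns to the right, which are no longer than column $j$. Each earlier rook occupies one row. Because the columns to its left are longer, that row meets column $j$, so each earlier rook removes one available cell there. Each earlier rook also creates $s$ new rows to its left, and these rows pass through column $j$, adding $s$ available cells.

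Hence the number of free cells in column $j$ is
\[
\lambda_{\ell-j+1}-(k_1+\cdots+k_{j-1})+s(k_1+\cdots+k_{j-1})=\lambda_{\ell-j+1}+(s-1)(k_1+\cdots+k_{j-1}).
\]
This number does not depend on where the earlier rooks sit. Multiplying over the columns with $k_j=1$ gives
\[
\prod_{j}\bigl(\lambda_{\ell-j+1}+(s-1)(k_1+\cdots+k_{j-1})\bigr)^{k_j},
\]
which is \eqref{StructureA}. Summing over ${\bf k}$ then yields \eqref{ConnRook}.

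\textbf{Step 3: the special cases.} These follow from \eqref{RookRed}: for $s=0$ we get $r_k^{(0)}(B_{\lambda})=r_k(B_{\lambda})$, and for $s=1$ we get $r_k^{(1)}(B_{\lambda})=f_k(B_{\lambda})$.

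\textbf{Main obstacle.} The delicate point is the geometric claim in Step 2: each previously used row genuinely meets column $j$, and each created row extends across column $j$. Both rely on column lengths being weakly increasing from right to left, together with the Goldman--Haglund convention that the new rows sit immediately below the rook's row and extend to the left edge. I would check this against the figure for $s=2$, and against the standard Goldman--Haglund formula $\prod_j(c_j+(s-1)(j-1))$ for a full placement, to make sure the orientation of column indices matches the correspondence $\sum_{l\le j} n_l=\lambda_{\ell-j+1}$ used in the proof of the Corollary.
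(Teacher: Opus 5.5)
Your proof is correct and takes essentially the same approach as the paper: place rooks column by column from right to left, with each previously placed rook cancelling one cell and creating $s$ cells in every column to its left, which produces the factor $\lambda_{\ell-j+1}+(s-1)(k_1+\cdots+k_{j-1})$. The only difference is organization. The paper handles $s=1$, $s=0$ and $s\ge 2$ separately, whereas you treat all $s$ at once, read off the special cases from \eqref{RookRed}, and justify more carefully why the number of free cells does not depend on where the earlier rooks sit.
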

\begin{proof} Let us denote by $B_{\lambda}$ the Ferrers board which for $\lambda \in \mathcal{I}_{m-\ell,\ell}$ has columns of length $\lambda_1,\lambda_2,\ldots, \lambda_{\ell}$. We consider first $s=0,1$ explicitly, before turning to the general case. For $s=1$, one has 
$$
\sum\limits_{{\bf k}\in \{0,1\}^{\ell} \atop |{\bf k}|=k}{\mathcal A}_{\ell|s;1,1}({\bf n}_{\lambda},{\bf k})=\sum\limits_{{\bf k}\in \{0,1\}^{\ell} \atop |{\bf k}|=k} \lambda_{1}^{k_{\ell}} \cdots \lambda_{\ell -1}^{k_2} \lambda_{\ell}^{k_1}. 
$$
A given summand $ \lambda_{1}^{k_{\ell}} \cdots\lambda_{\ell -1}^{k_2} \lambda_{\ell}^{k_1} $ counts the number of possible placements of $k$ rooks in $k$ chosen columns of the board (those where $k_j=1$) where at most one rook is placed in each column. In other words, it counts all possible $k$-file placements in the chosen columns. Thus, the sum over all possible choices of $k$ columns then gives the $k$-file number of the board, i.e., 
$$
\sum\limits_{{\bf k}\in \{0,1\}^{\ell} \atop |{\bf k}|=k}\lambda_{1}^{k_{\ell}} \cdots \lambda_{\ell -1}^{k_2} \lambda_{\ell}^{k_1}=f_k(B_{\lambda}).
$$ 
Since $f_k(B_{\lambda})=r_k^{(1)}(B_{\lambda})$, this shows the assertion for $s=1$. Let us turn to $s=0$. From \eqref{StructureA} one has that the left-hand side of \eqref{ConnRook} is given by
$$
\sum\limits_{{\bf k}\in \{0,1\}^{\ell} \atop |{\bf k}|=k} \left(\lambda_{1}-(k_1+\cdots+k_{\ell-1}))\right)^{k_{\ell}} \cdots \left(\lambda_{\ell -2}-(k_1+k_2))\right)^{k_3} \left(\lambda_{\ell -1}-k_1)\right)^{k_2}\left(\lambda_{\ell}\right)^{k_1}.
$$
Similar to the case $s=1$ considered above, the sum counts all possible choices of $k$ columns in which a rook will be placed (again, at most one rook is placed in each column -- $k_j=1$ means to place a rook into the $(\ell-j+1)$-th column). One summand counts the possibilities to place the $k$ rooks into the $k$ chosen columns in such a fashion that no two rook are in the same row. For example, if $k_1=1$, i.e., one rook is placed in the last column where one has $\lambda_{\ell}$ cells to choose, then in all columns to the left there is one possibility less to place a rook. Thus, since no two rooks are in the same row or column (i.e., they are non-attacking) the summand counts the number of possible rook-placements of $k$ rooks into the chosen $k$ columns. In total, left-hand side of \eqref{ConnRook} counts the $k$-th rook number of $B_{\lambda}$,
$$
\sum\limits_{{\bf k}\in \{0,1\}^{\ell} \atop |{\bf k}|=k} \left(\lambda_{1}-(k_1+\cdots+k_{\ell-1}))\right)^{k_{\ell}} \cdots \left(\lambda_{\ell -2}-(k_1+k_2))\right)^{k_3} \left(\lambda_{\ell -1}-k_1)\right)^{k_2}\left(\lambda_{\ell}\right)^{k_1}=r_k(B_{\lambda}).
$$
Since $r_k(B_{\lambda})=r_k^{(0)}(B_{\lambda})$, this shows the assertion for $s=0$. Let us now assume $s\geq 2$. Then the left-hand side of \eqref{ConnRook} is given by
$$
\sum\limits_{{\bf k}\in \{0,1\}^{\ell} \atop |{\bf k}|=k} \left(\lambda_{1}+(s-1)(k_1+\cdots+k_{\ell-1}))\right)^{k_{\ell}} \cdots \left(\lambda_{\ell -2}+(s-1)(k_1+k_2))\right)^{k_3} \left(\lambda_{\ell -1}+(s-1)k_1)\right)^{k_2}\left(\lambda_{\ell}\right)^{k_1}.
$$
The interpretation is similar to above. If we start placing rooks from right to left, we let $r$ be the smallest index with $k_r=1$. Thus, we have $\lambda_{\ell-r+1}$ choices to place a rook into this column. Observe that we then have in all columns to the left of this row $(s-1)$ additional choices to place a rook, corresponding to, e.g., the factor $ \left(\lambda_{\ell-r}+s-1\right)^{k_{r+1}}$ for the column directly to the left of it. Thus, we can interpret this in such a fashion that in all columns to the left of the column where we placed the rook, the row of the rook was split into $(s+1)$ rows, giving $s$ new rows (of which only $(s-1)$ are used for a placement of a rook). But this is exactly the description of the $s$-row creation rule described before Definition~\ref{SRook}. Thus, the left-hand side of \eqref{ConnRook} counts the $k$-th $s$-rook number of $B_{\lambda}$,
$$
\sum\limits_{{\bf k}\in \{0,1\}^{\ell} \atop |{\bf k}|=k} \left(\lambda_{1}+(s-1)(k_1+\cdots+k_{\ell-1}))\right)^{k_{\ell}} \cdots  \left(\lambda_{\ell -1}+(s-1)k_1)\right)^{k_2}\left(\lambda_{\ell}\right)^{k_1}=r_k^{(s)}(B_{\lambda}),
$$
showing the assertion for $s\geq 2$.
\end{proof}

By combining \eqref{CooExpand} and \eqref{ConnRook}, we obtain the following result.
\begin{corollary}\label{BinomialRook} Let $p=q=1$. Then the normal ordering coefficient ${\mathcal J}_{s,1,1}(m,\ell,k)$ is given by
\begin{equation}
{\mathcal J}_{s,1,1}(m,\ell,k)= \sum\limits_{\lambda \in \mathcal{I}_{m-\ell,\ell}}r_k^{(s)}(B_{\lambda}).
\end{equation}
In particular, \eqref{BinomMain} can be written as 
\begin{equation}
(X+Y)^m=\sum_{\ell=0}^m \sum_{k=0}^{\ell}  \sum\limits_{\lambda \in \mathcal{I}_{m-\ell,\ell}} h^{k} \, r_k^{(s)}(B_{\lambda}) \,Y^{m-\ell+(s-1)k}X^{\ell-k}.
\end{equation}
\end{corollary}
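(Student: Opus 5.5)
The corollary follows by combining two results already established: the expansion \eqref{CooExpand} of ${\mathcal J}_{s,1,1}(m,\ell,k)$ for $p=q=1$, and the rook-theoretic identity \eqref{ConnRook}.

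\emph{Step 1.} Start from \eqref{CooExpand}:
\[
{\mathcal J}_{s,1,1}(m,\ell,k)=\sum_{\lambda \in \mathcal{I}_{m-\ell,\ell}}\sum_{{\bf k}\in \{0,1\}^{\ell},\,|{\bf k}|=k} {\mathcal A}_{\ell|s;1,1}({\bf n}_{\lambda},{\bf k}).
\]
This comes from \eqref{eqBinoMain2} after observing that $p^{{\mathcal P}^{\ast}}=q^{{\mathcal Q}^{\ast}}=1$ when $p=q=1$.

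\emph{Step 2.} For each fixed $\lambda$, replace the inner sum over ${\bf k}$ by $r_k^{(s)}(B_\lambda)$ using \eqref{ConnRook}. This gives the first formula.

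\emph{Step 3.} Insert this expression into \eqref{BinomMain} of Theorem~\ref{thmbinomial}. The factor $Z_p^k$ equals $Z_1^k=I$ by \eqref{pqcr4}, since $p=1$, so it drops out. This yields the displayed normal ordered expansion of $(X+Y)^m$ without $Z_p$.

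Two boundary cases need checking.

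For $\ell=0$, the convention ${\mathcal J}_{s,p,q}(m,0,k)=1$ must agree with the rook sum. Here $\mathcal{I}_{m,0}$ contains only the empty partition. The empty board has $r_0^{(s)}=1$, and only $k=0$ occurs. So the two sides agree.

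For $\ell\ge 1$, note that $r_k^{(s)}(B_\lambda)$ should vanish whenever a placement is impossible, for example $k=\ell$ with $\lambda_\ell=0$. This is already built into \eqref{ConnRook}, since the product formula gives zero in that situation.

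The only real subtlety is the identification in \eqref{ConnRook} itself, which the preceding proposition already provides. The corollary is therefore a direct substitution.
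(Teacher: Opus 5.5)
Your proposal is correct and follows the paper's own argument: the paper obtains the corollary by combining \eqref{CooExpand} with \eqref{ConnRook} and then reading off \eqref{BinomMain} with $Z_1=I$. Your extra checks of the $\ell=0$ convention and of the vanishing product terms are consistent with the definitions; they are harmless additions that the paper leaves implicit.
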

By using a result of Celeste et al. \cite{CCG2017}, we easily obtain a $q$-analog to Corollary~\ref{BinomialRook}. 
\begin{corollary}\label{qBinomialRook} Let $p=1$. Then the normal ordering coefficient ${\mathcal J}_{s,1,q}(m,\ell,k)$ is given by
\begin{equation}\label{qBinomCoeff}
{\mathcal J}_{s,1,q}(m,\ell,k)= \sum\limits_{\lambda \in \mathcal{I}_{m-\ell,\ell}} R_{s, 1; q}[B_{\lambda}, k],
\end{equation}
where the $q$-deformed $s$-rook number $R_{s, 1; q}[B_{\lambda}, k]$ was introduced in \cite{CCG2017} by
\begin{equation}\label{RookCeleste}
	R_{s, 1; q}[B_{\lambda}, k] :=\sum\limits_{\phi\in\mathcal{R}_s(B_{\lambda}, k)} \omega_{s;q}(\phi)= \sum\limits_{\phi\in\mathcal{R}_s(B_{\lambda}, k)} q^{\# e-boxes}.
\end{equation}
In particular, \eqref{BinomMain} can be written as 
\begin{equation}\label{qBinomExpr}
(X+Y)^m=\sum_{\ell=0}^m \sum_{k=0}^{\ell}  \sum\limits_{\lambda \in \mathcal{I}_{m-\ell,\ell}} h^{k} \, R_{s, 1; q}[B_{\lambda}, k] \, Y^{m-\ell+(s-1)k}X^{\ell-k}.
\end{equation}
\end{corollary}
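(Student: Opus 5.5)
We specialize Theorem~\ref{thmbinomial} to $p=1$ and compare, word by word, with a normal ordering result for $p=1$ that is already available. For $p=1$ the relations \eqref{pqdefgenWeyl} reduce to $XY-qYX=hY^s$ with $Z_1=I$, which is the $q$-deformed generalized Weyl algebra $A_{s;h|q}$ studied by Celeste et al.\ \cite{CCG2017}. Their main normal ordering theorem (equivalently, Theorem~\ref{TheoWordNOS} with $p=1$) expresses any word in $X,Y$ through $q$-deformed $s$-rook numbers of its Ferrers board.

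The first step is to fix $\ell$ and $\lambda\in\mathcal{I}_{m-\ell,\ell}$ and normal order $\omega_\lambda=XY^{\lambda_1-\lambda_2}\cdots XY^{\lambda_\ell}$ in two ways. On one hand, \eqref{Omega} with $p=1$ gives the coefficient of $Y^{\lambda_1+(s-1)k}X^{\ell-k}$ as
\[
h^k\sum_{|{\bf k}|=k} q^{{\mathcal Q}^{\ast}_{\ell|s}({\bf n}_\lambda,{\bf k})}{\mathcal A}_{\ell|s;1,q}({\bf n}_\lambda,{\bf k}).
\]
This is exactly the $\lambda$-summand in \eqref{eqBinoMain2} at $p=1$, because ${\mathcal P}^{\ast}$ only contributes powers of $p$. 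On the other hand, Theorem~\ref{TheoWordNOS} (or \cite{CCG2017}) gives the same coefficient as $\mathscr{R}_{s,h;1,q}[B_{\omega_\lambda},k]$.

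The second step is to identify the board. In the construction of Theorem~\ref{thr1}, the $j$-th $X$ from the left in $\omega_\lambda$ has exactly $\lambda_j$ letters $Y$ to its right. Hence the Ferrers board attached to $\omega_\lambda$ has columns of lengths $\lambda_1,\ldots,\lambda_\ell$, that is, $B_{\omega_\lambda}=B_\lambda$. The prefactor $Y^{m-\ell-\lambda_1}$ in \eqref{binomgen2} does not change the rook numbers, by the remark following Theorem~\ref{thmword}. Moreover, $\mathscr{R}_{s,h;1,q}[B,k]=h^kR_{s,1;q}[B,k]$ by definition, since for $p=1$ the weight is $h^kq^{\#e\text{-boxes}}$.

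Uniqueness of normal ordered coefficients then equates the two expressions for each $\lambda$. Summing over $\lambda\in\mathcal{I}_{m-\ell,\ell}$ yields \eqref{qBinomCoeff}. Substituting into \eqref{BinomMain} with $Z_1=I$ gives \eqref{qBinomExpr}.

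The main obstacle is the legitimacy of the uniqueness step. The monomials $Y^aX^b$ must be linearly independent in $A_{s;h|q}$, a PBW-type fact implicitly assumed throughout \cite{TLM1,TLM2,CCG2017}. If one prefers to avoid it, the fallback is to compare the formulas directly. In that case I would adapt the proof of the preceding Proposition: place rooks column by column from right to left, and check that the $q$-count of $e$-boxes in each chosen column, together with the non-chosen columns, reproduces the factors $[\,\cdot\,]_{1,q}$ in ${\mathcal A}$ and the exponent ${\mathcal Q}^{\ast}$. This bookkeeping is the delicate part, and it is precisely what \cite{CCG2017} establishes.
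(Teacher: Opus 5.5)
Your proposal is correct and follows the paper's argument. The paper likewise applies the normal ordering result of Celeste et al.\ (Theorem~\ref{TheoWordNOS} at $p=1$) to each word $Y^{m-\ell-\lambda_1}\omega_\lambda$, and uses $B_{\omega_\lambda}=B_\lambda$ and $R_{s,h;q}[B_\lambda,k]=h^kR_{s,1;q}[B_\lambda,k]$ to obtain \eqref{qBinomExpr}.

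The only difference is the order of steps: the paper sums first and then compares with \eqref{BinomMain}, while you compare coefficients word by word and then sum over $\lambda$. Your order is slightly sharper. It also gives the per-$\lambda$ identity that the paper states right after the corollary. It also avoids a subtlety: for $s=2$ the monomials $Y^{m-\ell+k}X^{\ell-k}$ in \eqref{BinomMain} coincide for different pairs $(\ell,k)$ with the same $\ell-k$, so the paper's global comparison implicitly treats $h$ as a formal parameter.
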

\begin{proof}
Recall that the binomial $(X+Y)^m$ can be expanded as in \eqref{binomgen}. Note that each summand in the sum on the right-hand side of \eqref{binomgen}, i.e.,  $Y^{m-\ell-\lambda_1}XY^{\lambda_1-\lambda_2}\cdots XY^{\lambda_{\ell-1}-\lambda_\ell}XY^{\lambda_\ell}X^0$, is a word of the form $\mathrm{w}_{{\bf m}, {\bf n}}=Y^{n_r}X^{m_r}\cdots Y^{n_1}X^{m_1}$, where $|{\bf m}|= \ell$ and $|{\bf n}|=m-\ell$. Thus, using the result 
\begin{equation}\label{qrookfb}
	\mathrm{w}_{{\bf m}, {\bf n}}=\sum_{j=0}^{|{\bf{m}}|-m_1} R_{s, h; q}[B_{\mathrm{w}_{{\bf m}, {\bf n}}}, j] \, Y^{|{\bf{n}}|-j(1-s)}X^{|{\bf{m}}|-j}
\end{equation}
of Celeste et al. \cite{CCG2017} (which is, in fact, the case $p=1$ of Theorem~\ref{TheoWordNOS}), we find
$$
(X+Y)^m=\sum_{\ell=0}^m \sum_{\lambda \in \mathcal{I}_{m-\ell,\ell}}\sum_{j=0}^{\ell } R_{s, h; q}[B_{\lambda}, j] \, Y^{m-\ell-j(1-s)}X^{\ell-j},
$$
where we used $m_1=0$ and that the board associated to $Y^{m-\ell-\lambda_1}XY^{\lambda_1-\lambda_2}\cdots XY^{\lambda_{\ell-1}-\lambda_\ell}XY^{\lambda_\ell}X^0$ is $B_{\lambda}$. Since $ R_{s, h; q}[B_{\lambda}, j]= h^j R_{s, 1; q}[B_{\lambda}, j]$, this shows \eqref{qBinomExpr}. Comparing \eqref{qBinomExpr} with \eqref{BinomMain} yields \eqref{qBinomCoeff}.
\end{proof}
Comparing \eqref{eqBinoMain2} for $p=1$ with \eqref{qBinomCoeff} yields the following $q$-analog to \eqref{ConnRook},
\begin{equation}
\sum\limits_{{\bf k}\in \{0,1\}^{\ell} \atop |{\bf k}|=k} q^{{\mathcal Q}_{\ell|s}^{\ast}({\bf n}_{\lambda},{\bf k})} {\mathcal A}_{\ell|s;1,q}({\bf n}_{\lambda},{\bf k})=R_{s, 1; q}[B_{\lambda}, k],
\end{equation}
where ${\mathcal A}_{\ell|s;1,q}({\bf n}_{\lambda},{\bf k})$ is given by the $q$-deformed version of \eqref{StructureA} and ${\mathcal Q}_{\ell|s}^{\ast}({\bf n}_{\lambda},{\bf k})$ can be written explicitly as
$$
{\mathcal Q}_{\ell|s}^{\ast}({\bf n}_{\lambda},{\bf k})=(1-k_1)\lambda_{\ell}+(1-k_2)(\lambda_{\ell-1}+(s-1)k_1)+\cdots + (1-k_{\ell})(\lambda_{1}+(s-1)(k_1+\cdots + k_{\ell-1})).
$$

Using the same argument as for the case $p=1$ above and using the normal ordering results of Theorem~\ref{TheoWordNOS} imply the following result.
\begin{corollary}\label{pqBinomialRook} Let $p\neq 1\neq q$. The normal ordering coefficient ${\mathcal J}_{s,p,q}(m,\ell,k)$ is given, for all $s \in \mathbb{N}_0$, by
\begin{equation}\label{pqBinomCoeff}
{\mathcal J}_{s,p,q}(m,\ell,k)= \sum\limits_{\lambda \in \mathcal{I}_{m-\ell,\ell}} \mathscr{R}_{s,1; p,q}[B_{\lambda},k],
\end{equation}
where $\mathscr{R}_{s,1; p,q}[B_{\lambda},k]$ is defined for $s>0$ in \eqref{PQRookWeS}, and where we let $\mathscr{R}_{0,1; p,q}[B_{\lambda},k]:=\mathscr{R}_{1; p,q}[B_{\lambda},k]$ defined in \eqref{PQRookWe}. In particular, \eqref{BinomMain} can be written, for all $s \in \mathbb{N}_0$, as 
\begin{equation}\label{pqBinomExpr}
(X+Y)^m=\sum_{\ell=0}^m \sum_{k=0}^{\ell}  \sum\limits_{\lambda \in \mathcal{I}_{m-\ell,\ell}} h^{k} \, \mathscr{R}_{s,1; p,q}[B_{\lambda},k] \, Y^{m-\ell+(s-1)k}Z_p^{k}X^{\ell-k}.
\end{equation}
\end{corollary}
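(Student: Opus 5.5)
The plan mirrors the proof of Corollary~\ref{qBinomialRook}, now using the full $(p,q)$-statement Theorem~\ref{TheoWordNOS} instead of the result of Celeste et al. \emph{Step 1.} Start from the expansion \eqref{binomgen}, which writes $(X+Y)^m$ as a sum over $\ell$ and $\lambda\in\mathcal{I}_{m-\ell,\ell}$ of the words
$$Y^{m-\ell-\lambda_1}XY^{\lambda_1-\lambda_2}\cdots XY^{\lambda_{\ell-1}-\lambda_\ell}XY^{\lambda_\ell}X^0 .$$
Each such word has the form $\mathrm{w}_{{\bf m},{\bf n}}$ with $m_1=0$, $|{\bf m}|=\ell$ and $|{\bf n}|=m-\ell$. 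Its associated Ferrers board is $B_\lambda$, the board with columns $\lambda_1,\dots,\lambda_\ell$. This is the same identification already used in the proof of Corollary~\ref{qBinomialRook}. The leading factor $Y^{m-\ell-\lambda_1}$ does not change the board, by the remark after Theorem~\ref{thmword} that $\mathscr{R}_{s,h;p,q}[B_{\omega(n,m)},k]=\mathscr{R}_{s,h;p,q}[B_\omega,k]$.

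\emph{Step 2.} Apply Theorem~\ref{TheoWordNOS} to each word. This gives
$$\sum_{k=0}^{\ell}\mathscr{R}_{s,h;p,q}[B_\lambda,k]\,Y^{m-\ell+(s-1)k}Z_p^kX^{\ell-k}.$$
Then pull out $h^k$ using $\mathscr{R}_{s,h;p,q}=h^k\mathscr{R}_{s,1;p,q}$, which is immediate from \eqref{PQRookWeS} and likewise from \eqref{PQRookWe} when $s=0$. Summing over $\lambda$ and $\ell$ yields \eqref{pqBinomExpr}.

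\emph{Step 3.} Compare coefficients with \eqref{BinomMain} to obtain \eqref{pqBinomCoeff}. Two justifications are needed here.

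First, the monomials $Y^aZ_p^bX^c$ must be linearly independent in $A_{s;h|p,q}$ (a PBW-type basis). Only then is the normal ordered form unique and coefficient comparison legitimate. I would take this as implicit in the framework of \cite{TLM1}, where normal ordering coefficients are defined. Alternatively, I would note that both sides are obtained termwise from the same word expansion, by Theorem~\ref{thmword} and Theorem~\ref{TheoWordNOS} respectively, and compare them word by word.

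Second, the pairs $(\ell,k)$ must determine the monomial $Y^{m-\ell+(s-1)k}Z_p^kX^{\ell-k}$ injectively. This holds because the exponents of $Z_p$ and $X$ recover $k$ and $\ell$. The case $\ell=0$ is consistent with ${\mathcal J}_{s,p,q}(m,0,k)=1$: there the only word is $Y^m$, whose board is empty, so the rook number is $1$ for $k=0$.

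\emph{Main obstacle.} The delicate point is $s=0$. Theorem~\ref{TheoWordNOS} is phrased with the $s$-rook weights \eqref{PQWeightS}, which are defined only for $s>0$. I must check that for $s=0$ the correct normal-ordering weight is the $p$-box/$q$-box weight \eqref{PQRookWe} from \cite{TLM2}, which justifies the convention $\mathscr{R}_{0,1;p,q}:=\mathscr{R}_{1;p,q}$. I would first try to cite the $s=0$ normal ordering theorem of \cite{TLM2} directly. Failing that, I would verify it by comparing with Theorem~\ref{thmword} at $s=0$ on one-column steps, using induction on the number of $X$'s: each $X$ is moved past the $Y$'s to its right, producing either a rook with factor $h$ or a $q$-weight, and a $p$-weight arises when $X$ or $Y$ commutes past a $Z_p$.
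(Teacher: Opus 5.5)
Your Steps 1--2 are exactly the paper's argument. The paper proves this corollary in one line, by rerunning the proof of Corollary~\ref{qBinomialRook} with Theorem~\ref{TheoWordNOS} in place of the result of Celeste et al. For $s=0$ it relies, as you propose, on the $p$-box/$q$-box version of that theorem from \cite{TLM2}.

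\textbf{The gap is in Step 3.} The coefficient comparison does not work with the justification you give. The paper also compares coefficients without comment, so its proof has the same weakness. The linear independence of the monomials $Y^aZ_p^bX^c$, which you want to take as implicit, is false for $s\geq 1$ and $p^s\neq 1$. Resolving the overlap $XZ_pY$ in the two possible ways gives $(XZ_p)Y=p^2q\,YZ_pX+p^{s+1}h\,Y^sZ_p^2$ and $X(Z_pY)=p^2q\,YZ_pX+ph\,Y^sZ_p^2$. Hence $hp(p^s-1)\,Y^sZ_p^2=0$ in $A_{s;h|p,q}$. Every $Y^aZ_p^bX^c$ with $a\geq s$ and $b\geq 2$ therefore vanishes; for $s\geq 2$ this kills every term with $k\geq 2$ in \eqref{pqBinomExpr}. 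Those coefficients cannot be read off from the algebra element $(X+Y)^m$. For $s=0$, and for $p=1$, the algebra is an iterated Ore extension and your argument is fine.

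Your word-by-word alternative does not avoid this as stated, since it again compares two normal forms of one element. The clean way to close the argument is the per-word identity
\[
\sum_{{\bf k}\in\{0,1\}^{\ell},\ |{\bf k}|=k} p^{\mathcal{P}_{\ell}^{\ast}({\bf k})}\,q^{\mathcal{Q}_{\ell|s}^{\ast}({\bf n}_{\lambda},{\bf k})}\,\mathcal{A}_{\ell|s;p,q}({\bf n}_{\lambda},{\bf k})=\mathscr{R}_{s,1;p,q}[B_{\lambda},k],
\]
the $(p,q)$-analogue of \eqref{ConnRook}. You can justify it in one of two ways:
\begin{itemize}
\item observe that Theorems~\ref{thmword} and~\ref{TheoWordNOS} both describe the output of one and the same formal right-to-left reordering procedure; or
\item give a direct combinatorial argument.
\end{itemize}
It cannot be justified by uniqueness of normal forms.
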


\begin{remark}
We considered the binomial formula for variables satisfying $XY-qYX=hf(Y)Z_p$ where $f(Y)=Y^s$ is a monomial. It would be interesting to extend the considerations to the case of polynomials $f\in \mathbb{C}[X,Y]$. Even for $p=1$, only special cases have been considered. The case $f(X,Y)=X+Y$ has been studied thoroughly by many authors due to its combinatorial (permutation tableaux) and physical ramifications, see \cite[Section 8.4.3]{TMMS2016} or the review \cite{AWRBME2020} for some references. A quadratic polynomial, i.e., $f(X,Y)=aX^2+bY^2$, was considered by Rosengren \cite{HR2000} and, recently, physical application of the associated binomial formula have been found in models of statistical physics, see, e.g., \cite{GBMR2023,AP2013}.
 \end{remark}
 
\subsection{Viskov's result ($p=q=1$) and some consequences}\label{SectViskov}
Viskov \cite{OVV1996} described a procedure to determine the exponential of the sum of particular functions of two variables satisfying particular commutation relations, see also the recent discussion by Beauduin \cite{B2024}. As a special case of a more general result, Viskov showed the following result (using our notation).
\begin{proposition}[Viskov \cite{OVV1996}] \label{PBinVis}Let $X$ and $Y$ satisfy $XY-YX=\varphi(Y)$ where $\varphi$ is a given polynomial. Then
\begin{equation}\label{BinViskov}
\exp \left(t(X+Y)\right)=\exp \left( \int_0^t \hat{Y}(\tau) \, d\tau \right) \exp\left(tX \right),
\end{equation}
where $ \hat{Y}(t)$ is a solution of the Cauchy problem $\frac{d \hat{Y}}{dt}=\varphi(\hat{Y}), \hat{Y}|_{t=0}=Y$.
\end{proposition}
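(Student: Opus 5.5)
We prove Viskov's identity \eqref{BinViskov} by showing that both sides solve the same first-order linear ODE in $t$ with the same initial value. We work in the ring of formal power series in $t$ with coefficients in the algebra generated by $X$ and $Y$. Write $F(t):=\exp(t(X+Y))$ and $G(t):=U(t)\exp(tX)$, where $U(t):=\exp\left(\int_0^t \hat{Y}(\tau)\,d\tau\right)$.

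First we fix the meaning of $\hat{Y}$. The Cauchy problem $\frac{d\hat Y}{dt}=\varphi(\hat Y)$, $\hat Y(0)=Y$, has a unique formal solution $\hat Y(t)=\sum_n c_n(Y)t^n$. Each $c_n$ is a polynomial in $Y$, obtained recursively: $c_{n+1}$ is $\frac{1}{n+1}$ times the $t^n$-coefficient of $\varphi(\hat Y)$. Hence all $\hat Y(\tau)$ lie in the commutative subalgebra $\mathbb{C}[Y][[t]]$. Consequently $W(t):=\int_0^t\hat Y(\tau)\,d\tau$ commutes with its own derivative, so $U'(t)=\hat Y(t)U(t)=U(t)\hat Y(t)$. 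Clearly $F(0)=G(0)=I$ and $F'=(X+Y)F$, so it suffices to prove $G'(t)=(X+Y)G(t)$. Uniqueness then holds because the coefficients of a solution of $H'=(X+Y)H$ are determined recursively by $H(0)$.

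The key ingredient is the commutation relation $Xg(Y)=g(Y)X+\varphi(Y)g'(Y)$ for every polynomial, and coefficientwise every power series, $g$ in $Y$. This follows from $[X,\cdot]$ being a derivation with $[X,Y]=\varphi(Y)$, so $[X,Y^n]=nY^{n-1}\varphi(Y)$. We apply it to $g=U(t)$, regarded as a function of the initial value $Y$. Differentiating,
\[
G'(t)=\hat Y(t)U(t)e^{tX}+U(t)Xe^{tX},
\qquad
XU(t)=U(t)X+\varphi(Y)\,\partial_Y U(t).
\]
The goal $G'=(X+Y)G$ therefore reduces to the identity
\[
\hat Y(t)U(t)=YU(t)+\varphi(Y)\,\partial_YU(t),
\]
which, since $\partial_Y U=(\partial_Y W)U$ by commutativity, is equivalent to $\hat Y(t)-Y=\varphi(Y)\,\partial_Y W(t)$.

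This last identity is the main step. Write $\hat Y(t)=\Phi(t,Y)$ for the flow of $\varphi$. The flow property $\Phi(t,\Phi(\sigma,Y))=\Phi(t+\sigma,Y)$ gives $\partial_Y\Phi(t,Y)\,\varphi(Y)=\partial_t\Phi(t,Y)$, obtained by differentiating in $\sigma$ at $\sigma=0$. Integrating in $t$ from $0$ to $t$ then yields $\varphi(Y)\,\partial_Y W(t)=\Phi(t,Y)-Y$, as required. To stay purely formal, we prove $\varphi(Y)\partial_Y\Phi=\partial_t\Phi$ directly: both sides satisfy the linearized equation $\partial_t Z=\varphi'(\Phi)Z$ (using that $\partial_Y$ and $\partial_t$ commute) with the same initial value $\varphi(Y)$. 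Uniqueness for this linear formal ODE then gives equality.
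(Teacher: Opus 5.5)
The paper does not prove this proposition. It is quoted from Viskov~\cite{OVV1996} and used only as a black box, via $(X+Y)^n=n![t^n]\exp(t(X+Y))$. There is therefore no in-paper argument to compare with, and your proposal has to stand on its own.

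It does stand: the argument is correct. The reduction of $G'=(X+Y)G$ to $\hat Y-Y=\varphi(Y)\,\partial_Y W$ is right; you only need the sufficiency direction, obtained by multiplying on the right by $U$ and then $e^{tX}$. The formal proof of $\varphi(Y)\,\partial_Y\Phi=\partial_t\Phi$ also checks out: both sides solve $\partial_t Z=\varphi'(\Phi)Z$ with initial value $\varphi(Y)$, and integrating in $t$ then gives the needed identity.

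One point to tidy is notational. $\partial_Y$ is a well-defined derivation on $\mathbb{C}[Y]\subset A$ only if $Y$ satisfies no polynomial relation in the ambient algebra $A$. It is cleaner to do all flow computations in $\mathbb{C}[y][[t]]$ with a formal variable $y$ and then evaluate at $y=Y$, reading $Xg(Y)-g(Y)X=\varphi(Y)g'(Y)$ for formal polynomials $g$. Alternatively, argue in the universal algebra $\mathbb{C}\langle X,Y\rangle/(XY-YX-\varphi(Y))$ and push the identity forward.

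Your route verifies the differential equation for $\exp(\int_0^t\hat Y)\,e^{tX}$ directly. That costs two lemmas: the derivation formula for $[X,g(Y)]$, and the fact that the flow of $\varphi$ commutes with its generator. Their combined content is the commutator identity $[X,U(t)]=(\hat Y(t)-Y)U(t)$.

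A shorter standard argument conjugates instead. Set $F(t)=e^{t(X+Y)}e^{-tX}$; then $F'=e^{t(X+Y)}Ye^{-tX}=F(t)V(t)$ with $V(t)=e^{tX}Ye^{-tX}$. Conjugation by $e^{tX}$ is an automorphism of $A[[t]]$, so $V'=e^{tX}[X,Y]e^{-tX}=\varphi(V)$ with $V(0)=Y$. Uniqueness of formal solutions gives $V=\hat Y$, and since the $\hat Y(\tau)$ commute, $F=\exp(\int_0^t\hat Y)$.

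That version never differentiates with respect to $Y$, so the issue above does not arise, and it gets the flow property for free from the automorphism property. Yours has the merit of showing explicitly how $X$ acts on the factor $\exp(\int_0^t\hat Y)$.
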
 
The commutation relation assumed in Proposition~\ref{PBinVis} is exactly the case $p=q=1$ of \eqref{pqcr1}. Since $(X+Y)^n=n! [t^n] \exp \left(t(X+Y)\right)$ one obtains from Proposition~\ref{PBinVis} the following result. 
\begin{corollary} Let $X$ and $Y$ satisfy $XY-YX=\varphi(Y)$ where $\varphi$ is a given polynomial. Then
\begin{equation}\label{BinViskovCor}
(X+Y)^n=n! [t^n]  \left\{\exp \left( \int_0^t \hat{Y}(\tau) \, d\tau \right) \exp\left(tX \right) \right\},
\end{equation}
where $ \hat{Y}(t)$ is a solution of the Cauchy problem $\frac{d \hat{Y}}{dt}=\varphi(\hat{Y}), \hat{Y}|_{t=0}=Y$.
\end{corollary}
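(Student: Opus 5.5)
The plan is to read the corollary directly off Proposition~\ref{PBinVis} by extracting the coefficient of $t^n$ in \eqref{BinViskov}. Everything rests on treating both sides of \eqref{BinViskov} as formal power series in a commuting indeterminate $t$, with coefficients in the algebra generated by $X$ and $Y$.

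First, on the left-hand side, $X+Y$ is a fixed element of the algebra and $t$ is a central scalar. So the exponential is simply the formal series
$$
\exp\left(t(X+Y)\right)=\sum_{n\ge 0}\frac{t^n}{n!}(X+Y)^n .
$$
No reordering is needed here, and $[t^n]\exp(t(X+Y))=(X+Y)^n/n!$.

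Second, the right-hand side $\exp\left(\int_0^t \hat Y(\tau)\,d\tau\right)\exp(tX)$ must also be a well-defined formal power series in $t$. The Cauchy problem $\frac{d\hat Y}{dt}=\varphi(\hat Y)$, $\hat Y(0)=Y$, has a unique formal solution $\hat Y(t)=\sum_j c_j(Y)t^j$. This is because the coefficients are determined recursively by $(j+1)c_{j+1}=[t^j]\varphi(\hat Y)$. Each $c_j$ is a polynomial in $Y$, so all of them commute with one another. It follows that $\int_0^t\hat Y$ is a series with zero constant term, and its exponential is well defined. The product with $\exp(tX)=\sum_k t^kX^k/k!$ is then a formal series whose $t^n$-coefficient is a finite sum of terms (polynomial in $Y$) times $X^k$, with the order fixed as written.

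Finally, Proposition~\ref{PBinVis} asserts that the two series are equal. Equal formal series have equal coefficients, so comparing $[t^n]$ on both sides and multiplying by $n!$ gives \eqref{BinViskovCor}.

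The only delicate point is the second step: justifying that Viskov's identity is an identity of formal power series, and not merely an analytic statement. Since this is assumed in Proposition~\ref{PBinVis}, I would only remark that the coefficientwise interpretation is legitimate. It is legitimate because every coefficient of $t^n$ involves finitely many terms, and because the $Y$-dependent factors all lie in the commutative subalgebra $\mathbb{C}[Y]$.
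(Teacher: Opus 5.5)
Your proposal is correct and follows the paper's argument: the paper likewise just observes that $(X+Y)^n=n!\,[t^n]\exp\left(t(X+Y)\right)$ and compares coefficients of $t^n$ in Viskov's identity \eqref{BinViskov}. Your additional remarks on why the right-hand side is a well-defined formal power series with coefficients in $\mathbb{C}[Y]\cdot X^k$ are a sound, if optional, justification the paper leaves implicit.
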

\begin{example}[Weyl algebra] Consider $\varphi(Y)=hI$, i.e., $X,Y$ satisfy the commutation relation $XY-YX=hI$ of the Weyl algebra. The Cauchy problem becomes $\frac{d \hat{Y}}{dt}=hI, \hat{Y}|_{t=0}=Y$, with solution $\hat{Y}(t)=Y+htI$. Thus, $\int_0^t \hat{Y}(\tau) \, d\tau =Yt+\frac{h}{2}It^2$, and \eqref{BinViskov} implies that (note that there is a small typo in \cite{OVV1996})
$$
\exp \left(t(X+Y)\right)=\exp\left(\frac{ht^2}{2}\right) \exp\left(tY\right) \exp\left(tX \right),
$$
which is a well-known result (sometimes called ``splitting formula''), see \cite[Section 6.1.5]{TMMS2016} for a discussion and references. Using \eqref{BinViskovCor}, and expanding the exponentials, yields
$$
(X+Y)^n=n! [t^n]  \left\{ \sum_{k=0}^{\infty}\sum_{r=0}^{\infty}\sum_{s=0}^{\infty}\frac{h^k}{2^k k! r! s !}Y^rX^{s}t^{2k+r+s}\right\}.
$$
Using $s=\ell-k$ and $r=n-\ell -k$, one has $2k+r+s=2k+(n-\ell -k)+(\ell-k)=n$, hence
\begin{equation}\label{Yamazaki}
(X+Y)^n =\sum_{\ell=0}^{n}\sum_{k=0}^{\ell}h^k \frac{ n!}{2^k k! (\ell -k)! (n-\ell -k)!}Y^{n-\ell -k}X^{\ell-k},
\end{equation}
which is a well-known identity in the Weyl algebra, see \cite[Section 6.1.1]{TMMS2016} for some history of this remarkable formula (and some close relatives). We will return to it from a different point of view in Section~\ref{SecWeylBin}. 
\end{example}
Let us specialize the commutation relation assumed in the above results to $XY-YX=hY^s$ for $s\in \mathbb{N}_0$, i.e., $\varphi(Y)=hY^s$, is a monomial. The case $s=0$ corresponds to the preceding example, and for the cases $s=1$ and $s=2$ the solution of the corresponding Cauchy problems can be found in \cite{OVV1996}, see also the discussion in \cite[Section 8.4.1]{TMMS2016}.
\begin{example}[Shift algebra] Let $s=1$, i.e., $X$ and $Y$ satisfy the relation $XY-YX=hY$ of the shift algebra. The solution of the Cauchy problem $\frac{d \hat{Y}}{dt}=h\hat{Y}(t), \hat{Y}|_{t=0}=Y$ is given by $\hat{Y}(t)=Ye^{ht}$, implying $ \int_0^t \hat{Y}(\tau) \, d\tau =\frac{1}{h}(e^{ht}-1)Y$. Thus, \eqref{BinViskov} becomes an identity due to Sack \cite{RAS1958},
\begin{equation}\label{Sack}
\exp \left(t(X+Y)\right)=\exp\left(\frac{1}{h}(e^{ht}-1)Y\right) \exp\left(tX \right).
\end{equation}
Similar as in the preceding example, one finds by expanding the exponentials 
\begin{align*}
(X+Y)^n=&n! [t^n]  \left\{ \sum_{k=0}^{\infty} \sum_{r=0}^{\infty} \frac{1}{k!}\frac{1}{r !} \left(\frac{1}{h}(e^{ht}-1)\right)^kY^k X^{r} t^{r} \right\}\\=&n! [t^n]  \left\{ \sum_{k=0}^{\infty} \sum_{r=0}^{\infty}\sum_{m=0}^{\infty}\frac{1}{r !}\frac{1}{m!} h^{m-k} S(m,k)Y^k X^{r} t^{r+m} \right\},
\end{align*}
where we used in the second equation the exponential generating function of the Stirling numbers of the second kind. Letting $r=n-m$, and using the Bell polynomials $B_m(z)=\sum_{k} S(m,k) z^{k}$, one finds
\begin{equation}\label{VisShift}
(X+Y)^n=\sum_{m=0}^{n} \binom{n}{m} h^{m} B_m(Y/h) X^{n-m}.
\end{equation}
This result and line of argument can be found in \cite{OVV1995} (note that there is a slight typo in \cite{OVV1995}).
\end{example}

\begin{example}[Jordan plane] Let $s=2$, i.e., $X$ and $Y$ satisfy the relation $XY-YX=hY^2$ of the Jordan plane. The solution of the Cauchy problem $\frac{d \hat{Y}}{dt}=h(\hat{Y}(t))^2, \hat{Y}|_{t=0}=Y$ is given by $\hat{Y}(t)=\frac{Y}{(1-hYt)}$, implying $ \int_0^t \hat{Y}(\tau) \, d\tau =(1-htY)^{-\frac{1}{h}}$. Thus, \eqref{BinViskov} becomes an identity due to Berry \cite{MVB1966},
\begin{equation}\label{Berry}
\exp \left(t(X+Y)\right)=(1-htY)^{-\frac{1}{h}} \exp\left(tX \right).
\end{equation}
Expanding the exponential and using Newton's binomial series, we find
\begin{align*}
(X+Y)^n=n! [t^n]  \left\{\sum_{k=0}^{\infty} \sum_{m=0}^{\infty} \binom{-\frac{1}{h}}{k} \frac{1}{m!} (-h)^k Y^k X^mt^{m+k}\right\}.
\end{align*}
Thus, if $m=n-k$, this yields
$$
(X+Y)^n= \sum_{k=0}^{n} \binom{-\frac{1}{h}}{k} \frac{n!}{(n-k)!} (-h)^k Y^k X^{n-k}.
$$
Using $\binom{-\frac{1}{h}}{k} \frac{n!}{(n-k)!}=\binom{n}{k}\frac{1}{(-h)^k}\prod_{j=1}^{k-1}(1+hj)$, we recover the binomial formula due to Benaoum \cite{HBB1998},
\begin{equation}\label{ViskovJordan}
(X+Y)^n= \sum_{k=0}^{n} \binom{n}{k}\prod_{j=1}^{k-1}(1+hj)Y^k X^{n-k}.
\end{equation}
Defining the {\em $h$-binomial coefficients}, $\binom{n}{k}_h:=\binom{n}{k}\prod_{j=1}^{k-1}(1+hj)$, it was mentioned in the Introduction. We will return to it in Section~\ref{SectBinomJordan} where also an interpretation and a $q$-deformation will be discussed.
\end{example}
Following the above examples, it is in principle clear how to proceed for the case $XY-YX=hY^s$ for $s\in \mathbb{N}$ with $s\geq 3$. According to \cite[Corollary 8.57]{TMMS2016}, one has in this case
\begin{equation}\label{VisGen}
\exp \left(t(X+Y)\right)=\exp \left\{\frac{1-\left(1-t(s-1)hY^{s-1} \right)^{\frac{s-2}{s-1}}}{h(s-2)Y^{s-2}} \right\} \exp\left(tX \right).
\end{equation}
An equivalent expression is given by Beauduin \cite[Theorem 6.1]{B2024}. At the same place, one can also find the resulting binomial formula as follows. 
\begin{theorem}\cite[Theorem 6.1]{B2024} Let $X$ and $Y$ satisfy $XY-YX=hY^s$ with $s\in \mathbb{N}$. Then one has
\begin{equation}\label{BinBeau}
(X+Y)^n=\sum_{k=0}^n \sum_{j=0}^k \binom{n}{k} \mathfrak{S}_{s-1;h}(k,j) Y^{(s-1)(k-j)+j}X^{n-k},
\end{equation}
with the generalized Stirling numbers $\mathfrak{S}_{s;h}(k,j|1,1)\equiv \mathfrak{S}_{s;h}(k,j)=h^{k-j}\mathfrak{S}_{s;1}(k,j)$ considered in \eqref{StirGen}, for $p=q=1$.
\end{theorem}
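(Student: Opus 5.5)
The plan is to combine Viskov's factorization, Proposition~\ref{PBinVis} with $\varphi(Y)=hY^s$, with a coefficient recursion in the commutative variable $Y$. By \eqref{BinViskovCor}, and since $\exp(\int_0^t\hat Y(\tau)\,d\tau)$ is a power series in $t$ whose coefficients are functions of $Y$ alone, we have
$$
(X+Y)^n=\sum_{k=0}^n\binom{n}{k}F_k(Y)\,X^{n-k},\qquad F_k(Y):=k!\,[t^k]\,G(t,Y),\qquad G(t,Y):=\exp\Big(\int_0^t\hat Y(\tau)\,d\tau\Big).
$$
Here the product of the two exponentials gives the binomial convolution. So it suffices to show
$$
F_k(Y)=\sum_{j=0}^k \mathfrak{S}_{s-1;h}(k,j)\,Y^{(s-1)(k-j)+j}.
$$

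\emph{Step 1: a first-order PDE for $G$.} Put $u(t,Y)=\int_0^t\hat Y(\tau)\,d\tau$, where $\hat Y(\tau)=\Phi_\tau(Y)$ is the flow of the vector field $hY^s\partial_Y$. The flow property $\Phi_\tau(\Phi_\varepsilon(Y))=\Phi_{\tau+\varepsilon}(Y)$ gives $u(t,\Phi_\varepsilon(Y))=u(t+\varepsilon,Y)-u(\varepsilon,Y)$. Differentiating at $\varepsilon=0$ yields $hY^s\partial_Yu=\partial_tu-Y$, that is, $\partial_tu=Y+hY^s\partial_Yu$. Consequently
$$
\partial_tG=(Y+hY^s\partial_Y)G,\qquad G(0,Y)=1,
$$
and comparing coefficients of $t^k$ gives $F_0=1$ and $F_{k+1}=YF_k+hY^sF_k'$. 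I expect this to be the main obstacle, namely justifying the flow identity rigorously. I would do it at the level of formal power series in $t$ with polynomial coefficients in $Y$. One can check directly that $\hat Y(t)$ is the unique series solution of the Cauchy problem and that the series $\Phi_{\tau}\circ\Phi_\varepsilon$ and $\Phi_{\tau+\varepsilon}$ solve the same ODE in $\tau$ with the same initial value.

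\emph{Step 2: recursion for $F_k$.} Write $e(k,j)=(s-1)(k-j)+j$ and $F_k=\sum_j c(k,j)Y^{e(k,j)}$. We have $e(k,j-1)+1=e(k+1,j)$ and $e(k,j)+s-1=e(k+1,j)$. Hence the recursion from Step~1 gives
$$
c(k+1,j)=c(k,j-1)+h\,e(k,j)\,c(k,j),\qquad c(0,0)=1.
$$

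\emph{Step 3: the same recursion for the Stirling numbers.} In the algebra with $XY-YX=hY^{s-1}$ (the case $p=q=1$ of \eqref{StirGen} with $s$ replaced by $s-1$), the commutator $[X,\cdot\,]$ is a derivation and $Y^{s-1}$ commutes with $Y$. Therefore $XY^e=Y^eX+h\,e\,Y^{e+s-2}$. Multiplying on the left, we get
$$
YX\cdot Y^{e}X^j=Y^{e+1}X^{j+1}+h\,e\,Y^{e+s-1}X^j.
$$
Applying this to $(YX)^{k+1}=YX\,(YX)^k$ with $e=e(k,j)$ shows that $\mathfrak{S}_{s-1;h}(k,j)$ satisfies exactly the recursion of Step~2, with $\mathfrak{S}_{s-1;h}(0,0)=1$. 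By induction on $k$ we get $c(k,j)=\mathfrak{S}_{s-1;h}(k,j)$. Substituting into the expansion above yields \eqref{BinBeau}.
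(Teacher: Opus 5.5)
Your proof is correct. The paper itself does not prove this statement: it quotes it from Beauduin. The surrounding text indicates the intended route, namely solving Viskov's Cauchy problem explicitly and expanding the closed form \eqref{VisGen} coefficientwise, as in the examples for $s=1,2,3$.

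You take a different route. From Viskov's factorization you extract only the transport equation $\partial_t G=(Y+hY^s\partial_Y)G$, i.e.\ the recursion $F_{k+1}=YF_k+hY^sF_k'$. You then match it with the recursion $\mathfrak{S}_{s-1;h}(k+1,j)=\mathfrak{S}_{s-1;h}(k,j-1)+h\,e(k,j)\,\mathfrak{S}_{s-1;h}(k,j)$ obtained from $(YX)^{k+1}=YX\,(YX)^k$. This buys two things:
\begin{itemize}
\item it treats all $s\in\mathbb{N}$ uniformly, whereas \eqref{VisGen} is stated only for $s\geq 3$ and degenerates at $s=1,2$;
\item it never needs a generating function for the generalized Stirling numbers.
\end{itemize}

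Your self-declared main obstacle can in fact be removed altogether. Since $[X,\cdot\,]$ is a derivation and $[X,Y]=hY^s$ commutes with $Y$, one has $XF(Y)=F(Y)X+hY^sF'(Y)$ for every polynomial $F$. Multiply $(X+Y)^n=\sum_k\binom{n}{k}F_kX^{n-k}$ on the left by $X+Y$ and apply Pascal's rule; this proves the expansion by induction on $n$, with $F_k$ defined by your recursion. Neither Viskov's formula nor the flow identity is needed. If you keep Step~1, the identity $hY^s\partial_Y\hat Y(t)=h\hat Y(t)^s$ is quickest via uniqueness: both sides solve $\partial_tA=hs\hat Y^{s-1}A$ with initial value $hY^s$.

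There is one small point of logic in Step~2. For $s=2$ all exponents $e(k,j)=k$ coincide, so the $c(k,j)$ are not determined by $F_k$, and their recursion cannot be read off from $F_k$. Run the argument in the other direction: put $c(k,j):=\mathfrak{S}_{s-1;h}(k,j)$ and check $F_k=\sum_j c(k,j)Y^{e(k,j)}$ by induction using both recursions. No such issue arises on the Stirling side, because there the coefficients are indexed by the power of $X$ and the monomials $Y^aX^b$ form a basis.
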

Let us specialize to $s=3$. From \eqref{VisGen}, we obtain 
\begin{equation}\label{VisS3}
\exp \left(t(X+Y)\right)=\exp \left\{\frac{1-\sqrt{1-2hY(tY)}}{hY} \right\} \exp\left(tX \right).
\end{equation}
The Bessel polynomials are defined, for $\ell \in \mathbb{N}_0$, by
$$
y_{\ell}(x):=\sum_{k=0}^{\ell} \frac{(\ell +k)!}{2^k k! (\ell -k)!}x^k,
$$
and they have the generating function 
$$
\sum_{n=0}^{\infty}\frac{t^n}{n!}y_{n-1}(x)=e^{\frac{1-\sqrt{1-2xt}}{x}}.
$$
Thus, we conclude that
$$
\exp \left(t(X+Y)\right)=\sum_{n=0}^{\infty}\frac{(Yt)^n}{n!}y_{n-1}(hY) e^{tX}.
$$
By expanding the exponentials and comparing coefficients, we recover the following binomial formula \cite[Theorem 2.3]{TMMS2013}.
\begin{proposition}\label{Bessel} Let $X$ and $Y$ satisfy the relation $XY-YX=-Y^3$. Then one has 
\begin{equation}\label{bf2823}
(X+Y)^n= \sum_{k=0}^n \binom{n}{k} Y^{k}y_{k-1}(-Y)X^{n-k},
\end{equation}
where $y_{\ell}(x)$ are Bessel polynomials defined above.
\end{proposition}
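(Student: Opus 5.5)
The plan is to specialize Viskov's exponential identity, Proposition~\ref{PBinVis}, to $\varphi(Y)=hY^3$ and then set $h=-1$ at the end. All the manipulations take place in the commutative subalgebra generated by $Y$, tensored with formal power series in $t$. Only at the last step does the factor $e^{tX}$ appear, and it always stands on the right. Hence no commutation relation is needed beyond the one already used inside Proposition~\ref{PBinVis}.

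\emph{Step 1.} I will first re-derive \eqref{VisS3} directly rather than only quoting \eqref{VisGen}. The Cauchy problem $\frac{d\hat Y}{dt}=h\hat Y^3$, $\hat Y(0)=Y$, has the solution $\hat Y(t)=Y(1-2hY^2t)^{-1/2}$, understood as a formal power series in $t$ with coefficients polynomial in $Y$. One checks this by differentiating: the derivative is $hY^3(1-2hY^2t)^{-3/2}=h\hat Y^3$. Integrating term by term gives
\[
\int_0^t\hat Y(\tau)\,d\tau=\frac{1-\sqrt{1-2hY^2t}}{hY}.
\]
This is a genuine power series in $t$ with polynomial coefficients in $Y$, because the constant term of the numerator cancels and every remaining term carries at least one factor $Y^2$. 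Proposition~\ref{PBinVis} then yields \eqref{VisS3}.

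\emph{Step 2.} Next I substitute $x=hY$ and replace $t$ by $tY$ in the Bessel generating function. This is legitimate because $Y$ commutes with itself, so we can work formally in $\mathbb{C}[Y][[t]]$. The substitution gives
\[
\exp\left\{\frac{1-\sqrt{1-2hY(tY)}}{hY}\right\}=\sum_{k\ge0}\frac{(tY)^k}{k!}\,y_{k-1}(hY).
\]
Here the convention $y_{-1}\equiv1$ accounts for the constant term. It matches the $k=0$ term of \eqref{bf2823}, which must equal $X^n$.

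\emph{Step 3.} I then multiply on the right by $e^{tX}=\sum_r t^rX^r/r!$ and extract $(X+Y)^n=n![t^n]\exp(t(X+Y))$. Taking $r=n-k$ gives
\[
(X+Y)^n=\sum_{k=0}^n\binom{n}{k}Y^k y_{k-1}(hY)X^{n-k}.
\]
Setting $h=-1$ gives exactly \eqref{bf2823}.

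The main point needing care is Step 1. Proposition~\ref{PBinVis} has to be read as an identity of formal power series in $t$, and we must confirm that $\frac{1-\sqrt{1-2hY^2t}}{hY}$ contains no genuine division by $Y$. I would handle this by expanding the square root binomially: every term of order $t^j$ with $j\ge1$ carries $Y^{2j}$, so after dividing by $hY$ each coefficient is the polynomial $Y^{2j-1}$ times a constant. The same expansion also shows that the Bessel generating function identity can be applied coefficientwise. A useful sanity check is $n=2$, where the formula gives $X^2+2YX+Y^2y_1(-Y)=X^2+2YX+Y^2-Y^3$. This agrees with $XY=YX-Y^3$.
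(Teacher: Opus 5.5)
Your proposal is correct and follows the paper's route. You specialize Viskov's identity to $\varphi(Y)=hY^3$ to obtain \eqref{VisS3}, substitute $x=hY$ and $t\mapsto tY$ in the Bessel generating function, multiply on the right by $e^{tX}$, extract $n![t^n]$, and set $h=-1$. Your explicit solution of the Cauchy problem, your check that no genuine division by $Y$ occurs, and your convention $y_{-1}\equiv 1$ are details the paper leaves implicit.
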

The equivalent version given in \cite[Theorem 2.3]{TMMS2013} can be obtained by letting $m = n-k$,
\begin{equation}\label{bf2823b}
(X+Y)^n= \sum_{m=0}^n \binom{n}{m} Y^{n-m}y_{n-m-1}(-Y)X^{m}.
\end{equation}
\begin{remark}\label{RemFrac}
To treat the Stirling numbers of order $\frac{1}{2}$, a generalized Weyl algebra $\mathcal{W}^{\frac{1}{2}}$ was introduced in \cite[Definition 4.1]{MS2024} with generators $U,V,W$ (representing abstractly $D,X^{\frac{1}{2}},X^{-\frac{1}{2}}$) where in particular $UW-WU=-\frac{1}{2}W^3$. Thus, \eqref{bf2823b} gives an expression for $(U+W)^n$ in $\mathcal{W}^{\frac{1}{2}}$.
\end{remark}
For arbitrary $s\in \mathbb{N}$, we can write \eqref{BinBeau} in an equivalent form similar to \eqref{bf2823b}. This is the straightforward generalization of Viskov's result \eqref{VisShift}. 
\begin{proposition} Let $X$ and $Y$ satisfy $XY-YX=hY^s$ with $s\in \mathbb{N}$. Then one has
\begin{equation}\label{ViskovS}
(X+Y)^n= \sum_{m=0}^n \binom{n}{m} h^{n-m}\, Y^{(n-m)(s-1)} \mathfrak{B}^{(s-1)}_{n-m}(Y^{2-s}/h) X^{m} ,
\end{equation}
where 
\begin{equation}\label{GBPpq1}
	\mathfrak{B}^{(s)}_{\ell}(x)\equiv \mathfrak{B}^{(s;1)}_{\ell|1,1}(x)=\sum_{k=0}^{\ell}\mathfrak{S}_{s;1}(\ell,k|1,1)x^k\equiv\sum_{k=0}^{\ell}\mathfrak{S}_{s;1}(\ell,k)x^k,
\end{equation} 
are generalized Bell polynomials defined in \cite{LO2021}, for $p=q=1$, and satisfy
\begin{equation}\label{IdGBPpq1}
\mathfrak{B}^{(s;h)}_{\ell|1,1}(x)=h^\ell\mathfrak{B}^{(s;1)}_{\ell|1,1}(x/h).	
\end{equation}
\end{proposition}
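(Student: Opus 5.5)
The plan is to derive \eqref{ViskovS} directly from Beauduin's formula \eqref{BinBeau}. All the work is in rewriting the inner sum over $j$ as a generalized Bell polynomial and then reindexing the outer sum. No new normal ordering is needed.

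First, I would establish the scaling identity \eqref{IdGBPpq1}, since it explains how $h$ enters. By the relation $\mathfrak{S}_{s;h}(\ell,k)=h^{\ell-k}\mathfrak{S}_{s;1}(\ell,k)$ recalled in the theorem from \cite{B2024}, one has
$$
\mathfrak{B}^{(s;h)}_{\ell|1,1}(x)=\sum_{k=0}^{\ell}h^{\ell-k}\mathfrak{S}_{s;1}(\ell,k)x^k=h^{\ell}\sum_{k=0}^{\ell}\mathfrak{S}_{s;1}(\ell,k)(x/h)^k=h^\ell\mathfrak{B}^{(s;1)}_{\ell|1,1}(x/h).
$$

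Next, fix $k$ in \eqref{BinBeau}, with $s$ replaced by $s-1$ in the Stirling numbers as stated there. Since powers of $Y$ commute with each other, I would factor
$$
Y^{(s-1)(k-j)+j}=Y^{(s-1)k}\,\bigl(Y^{2-s}\bigr)^{j}.
$$
Using $\mathfrak{S}_{s-1;h}(k,j)=h^{k-j}\mathfrak{S}_{s-1;1}(k,j)$, the inner sum becomes
$$
\sum_{j=0}^k \mathfrak{S}_{s-1;h}(k,j)Y^{(s-1)(k-j)+j}=h^{k}Y^{(s-1)k}\sum_{j=0}^k\mathfrak{S}_{s-1;1}(k,j)\bigl(Y^{2-s}/h\bigr)^{j}=h^kY^{(s-1)k}\mathfrak{B}^{(s-1)}_{k}(Y^{2-s}/h),
$$
where the last step is the definition \eqref{GBPpq1}. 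Finally, I would substitute $m=n-k$ and use $\binom{n}{k}=\binom{n}{m}$, which gives \eqref{ViskovS}.

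The only delicate point is that for $s\ge 3$ the argument $Y^{2-s}$ is a negative power of $Y$, which is not an element of the algebra. I would handle this by reading \eqref{ViskovS} as shorthand: one expands the Bell polynomial and multiplies through by $Y^{(n-m)(s-1)}$ before interpreting. Each resulting monomial is then $Y^{(s-1)(k-j)+j}$ with a nonnegative exponent, so the expression is a genuine polynomial in $Y$. Equivalently, one may work in the commutative ring $\mathbb{C}[Y,Y^{-1}]$ generated by $Y$ alone and observe that the result lies in $\mathbb{C}[Y]$. I would state this convention explicitly.
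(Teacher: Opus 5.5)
Your proposal is correct and follows essentially the same route as the paper. Both arguments substitute $m=n-k$ in Beauduin's formula \eqref{BinBeau}, factor out $Y^{(n-m)(s-1)}$ via $(s-1)(k-j)+j=(s-1)k+(2-s)j$, and identify the inner sum as $\mathfrak{B}^{(s-1;h)}_{n-m|1,1}(Y^{2-s})=h^{n-m}\mathfrak{B}^{(s-1)}_{n-m}(Y^{2-s}/h)$ using the scaling identity. Two points in your version are clarifications the paper leaves implicit: you derive \eqref{IdGBPpq1} explicitly from $\mathfrak{S}_{s;h}(\ell,k)=h^{\ell-k}\mathfrak{S}_{s;1}(\ell,k)$, and you read $Y^{2-s}$ as a formal Laurent expression when $s\ge 3$.
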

\begin{proof}
Let us introduce in \eqref{BinBeau} $m = n-k$, hence
$$
(X+Y)^n=\sum_{m=0}^n \binom{n}{m} Y^{(n-m)(s-1)}\left( \sum_{j=0}^{m}  \mathfrak{S}_{s-1;h}(n-m,j) Y^{-(s-2)j}\right) X^{m},
$$
where we used that $(s-1)(n-m-j)+j=(n-m)(s-1)-(s-2)j$. Observe that
$$
\sum_{j=0}^{m}  \mathfrak{S}_{s-1;h}(n-m,j) Y^{-(s-2)j}=\mathfrak{B}^{(s-1;h)}_{n-m|1,1}(Y^{2-s})=h^{n-m}\, \mathfrak{B}^{(s-1;1)}_{n-m|1,1}(Y^{2-s}/h),
$$
where we used in the first equation \eqref{GBPpq1} and in the second equation \eqref{IdGBPpq1}. Abbreviating $\mathfrak{B}^{(s-1)}_{n-m}(x)\equiv \mathfrak{B}^{(s-1;1)}_{n-m|1,1}(x)$, this shows the assertion.
\end{proof}
Note that choosing $s=1$ in \eqref{ViskovS} and switching to $k=n-m$, one recovers \eqref{VisShift} since $\mathfrak{B}^{(0)}_{k}(x) \equiv B_k(x)$. Choosing $s=2$ in \eqref{ViskovS} and switching to $k=n-m$, one finds
$$
(X+Y)^n= \sum_{k=0}^n \binom{n}{k} h^{k}\, \mathfrak{B}^{(1)}_{k}(h^{-1}) Y^{k} X^{n-k}.
$$ 
Recalling $\mathfrak{S}_{1;1}(k,j)=c(k,j)$, one has $h^{k}\, \mathfrak{B}^{(1)}_{k}(h^{-1})=\prod_{j=0}^{k-1}(1+hj)$, thereby recovering \eqref{ViskovJordan}. 

Furthermore, choosing $s=3$ and $h=-1$ in \eqref{ViskovS} yields upon comparison with \eqref{bf2823b} the identity $ (-Y)^{(n-m)} \mathfrak{B}^{(2)}_{n-m}(-Y^{-1})=y_{n-m-1}(-Y)$, or, 
$$ \mathfrak{B}^{(2)}_{\ell}(x)=y_{\ell -1}(x^{-1})x^{\ell}.$$

In the following sections, we consider \eqref{BinomMain} for $s \in \{0,1,2,3\}$, corresponding to $(p,q)$-deformations of well-known structures. Before we do this, we consider briefly the case $h=0$ corresponding to $q$-commuting variables, i.e., the quantum plane.

\subsection{The quantum plane}\label{BinomPS}
As first example, let us recover the conventional $q$-binomial formula \eqref{BFPS} due to Potter \cite{HSAP1950} and Schützenberger \cite{MPS1953}.
\begin{corollary}\label{Potter} Let $h=0$, i.e., one has $XY=qYX$. Then one has, for $m \in \mathbb{N}$, that
$$
(X+Y)^m=\sum_{\ell=0}^m  {m\choose \ell}_{q} \,Y^{m-\ell}X^{\ell}.
$$
\end{corollary}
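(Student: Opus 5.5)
Setting $h=0$ in Theorem~\ref{thmbinomial}, every term with $k\geq 1$ is killed by the factor $h^{k}$. Only $k=0$ survives, and we get
$$
(X+Y)^m=\sum_{\ell=0}^m {\mathcal J}_{s,p,q}(m,\ell,0)\,Y^{m-\ell}X^{\ell}.
$$
Here $Z_p$ appears with exponent $0$, so $p$ and $s$ play no role. It therefore suffices to show ${\mathcal J}_{s,p,q}(m,\ell,0)={m\choose \ell}_q$. For $\ell=0$ both sides equal $1$, so assume $\ell\geq 1$.

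For $k=0$ the only admissible vector in \eqref{eqBinoMain} is ${\bf k}={\bf 0}$. Then ${\mathcal P}^{\ast}_{\ell}({\bf 0})=0$, and ${\mathcal A}_{\ell|s;p,q}({\bf n}_\lambda,{\bf 0})=1$ as an empty product. From the explicit form of ${\mathcal Q}^{\ast}_{\ell|s}$ given after Corollary~\ref{qBinomialRook},
$$
{\mathcal Q}^{\ast}_{\ell|s}({\bf n}_\lambda,{\bf 0})=\lambda_\ell+\lambda_{\ell-1}+\cdots+\lambda_1=|\lambda|.
$$
Hence
$$
{\mathcal J}_{s,p,q}(m,\ell,0)=\sum_{\lambda\in\mathcal{I}_{m-\ell,\ell}}q^{|\lambda|}.
$$
This is consistent with the direct picture: each word in \eqref{binomgen} is normal ordered by moving every $X$ past the $Y$'s to its right, with one factor $q$ per transposition, and $|\lambda|$ counts exactly these inversions.

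It remains to invoke the classical fact that the generating function by size of partitions fitting in an $(m-\ell)\times\ell$ box is the Gaussian binomial:
$$
\sum_{\lambda\in\mathcal{I}_{m-\ell,\ell}}q^{|\lambda|}={m\choose \ell}_q .
$$
If this is to be proved rather than quoted, I would induct on $m$, splitting according to whether $\lambda_1=m-\ell$ or $\lambda_1<m-\ell$ (equivalently, whether the first letter of the word in \eqref{eqbij1} is $X$ or $Y$). This gives the $q$-Pascal recursion ${m\choose \ell}_q=q^{m-\ell}{m-1\choose \ell-1}_q+{m-1\choose \ell}_q$.

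The only genuine point of care is bookkeeping: checking that ${\mathcal Q}^{\ast}$ at ${\bf k}={\bf 0}$ really reduces to $|\lambda|$, using $\sum_{l\le j}n_l=\lambda_{\ell-j+1}$ from the proof of the corollary for $p=q=1$. After that, the identification with ${m\choose\ell}_q$ is standard.
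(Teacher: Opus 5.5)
Your proposal is correct and follows the paper's proof essentially step for step. Setting $h=0$ leaves only the $k=0$ terms. With ${\bf k}={\bf 0}$ one has ${\mathcal P}^{\ast}_{\ell}({\bf 0})=0$, ${\mathcal A}_{\ell|s;p,q}({\bf n}_\lambda,{\bf 0})=1$ and ${\mathcal Q}^{\ast}_{\ell|s}({\bf n}_\lambda,{\bf 0})=|\lambda|$, so ${\mathcal J}_{s,p,q}(m,\ell,0)=\sum_{\lambda\in\mathcal{I}_{m-\ell,\ell}}q^{|\lambda|}=\binom{m}{\ell}_q$. Your only additions are the correct $q$-Pascal sketch $\binom{m}{\ell}_q=q^{m-\ell}\binom{m-1}{\ell-1}_q+\binom{m-1}{\ell}_q$ for this Gaussian generating function, which the paper simply quotes, and the inversion-count sanity check.
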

\begin{proof}
From \eqref{BinomMain}, one finds for $h=0$ that only the summand $k=0$ remains, i.e.,
$$
(X+Y)^m=\sum_{\ell=0}^m  {\mathcal J}_{s,p,q}(m,\ell,0) \,Y^{m-\ell}X^{\ell},
$$
where  we have from \eqref{eqBinoMain} that
$$
{\mathcal J}_{s,p,q}(m,\ell,0)= p^{{\mathcal P}_{\ell}^{\ast}({\bf 0})}\sum\limits_{\lambda \in \mathcal{I}_{m-\ell,\ell}}  q^{{\mathcal Q}_{\ell|s}^{\ast}({\bf n}_{\lambda},{\bf 0})} {\mathcal A}_{\ell|s;p,q}({\bf n}_{\lambda},{\bf 0}),
$$
and where ${\bf n}_{\lambda}=(\lambda_1-\lambda_2,\lambda_2-\lambda_3,\ldots,\lambda_{\ell-1}-\lambda_{\ell},\lambda_{\ell} )$ for $\lambda \in \mathcal{I}_{m-\ell,\ell}$. Now, from the definitions it follows immediately that ${\mathcal P}_{\ell}^{\ast}({\bf 0})=0$ as well as ${\mathcal A}_{\ell|s;p,q}({\bf n}_{\lambda},{\bf 0})=1$, yielding
$$
{\mathcal J}_{s,p,q}(m,\ell,0)= \sum\limits_{\lambda \in \mathcal{I}_{m-\ell,\ell}}  q^{{\mathcal Q}_{\ell|s}^{\ast}({\bf n}_{\lambda},{\bf 0})}.
$$
The definition of ${\mathcal Q}_{r|s}^{\ast}({\bf n},{\bf k})$ shows that ${\mathcal Q}_{r|s}^{\ast}({\bf n}_{\lambda},{\bf 0})=\sum_{j=1}^{\ell} \sum_{l=1}^{j}n_l $, where $n_l=(\lambda_{\ell -l +1}-\lambda_{\ell -l +2})$ with $\lambda_{\ell +1}=0$. Thus, $\sum_{l=1}^{j}n_l =\lambda_{\ell - j +1}$. It follows that ${\mathcal Q}_{r|s}^{\ast}({\bf n}_{\lambda},{\bf 0})=\lambda_1+\cdots+\lambda_{\ell}=|\lambda|$, hence
$$
{\mathcal J}_{s,p,q}(m,\ell,0)= \sum\limits_{\lambda \in \mathcal{I}_{m-\ell,\ell}}  q^{|\lambda|}={m\choose \ell}_{q},
$$
showing the assertion.
\end{proof}

\subsection{The Weyl algebra, the $q$-Weyl algebra and the $(p,q)$-analog}\label{BinomWeyl}
\subsubsection{The Weyl algebra}\label{SecWeylBin} For the Weyl algebra, we can recover the following well-known result (see, e.g., \cite[Equation (6.28)]{TMMS2016} or \cite[Section 6.1.1]{TMMS2016} for some references) which we already observed above \eqref{Yamazaki}.
\begin{corollary}\label{WeylBinCor} Let $s=0, p=1,q=1$. Then $Z_1=I$ and \eqref{pqdefgenWeyl} reduce to the commutation relation $XY=YX+hI$ of the Weyl algebra, and one has 
$$
(X+Y)^m=\sum_{\ell=0}^m \sum_{k=0}^{\ell} h^{k} \left\{ m \atop \ell \right\}_k \,Y^{m-\ell-k}X^{\ell-k},
$$
where the {\em Weyl binomial coefficients} $\left\{ m \atop \ell \right\}_k$ are given by (see \cite[Equation (6.30)]{TMMS2016})
\begin{equation}\label{WeylBinCoEx}
\left\{ m \atop \ell \right\}_k=\frac{m!}{2^k k! (\ell -k)!(m-\ell - k)!}.
\end{equation}
\end{corollary}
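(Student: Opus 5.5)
We specialize Corollary~\ref{BinomialRook} to $s=0$. For $p=q=1$ and $s=0$ it gives
\[
(X+Y)^m=\sum_{\ell=0}^m\sum_{k=0}^{\ell}h^k\Big(\sum_{\lambda\in\mathcal I_{m-\ell,\ell}}r_k(B_\lambda)\Big)Y^{m-\ell-k}X^{\ell-k},
\]
where we use $r_k^{(0)}=r_k$ and $Z_1=I$. So it suffices to show
\[
\sum_{\lambda\in\mathcal I_{m-\ell,\ell}} r_k(B_\lambda)=\frac{m!}{2^kk!(\ell-k)!(m-\ell-k)!}.
\]
Alternatively, we could simply compare with \eqref{Yamazaki}, which was already derived from Viskov's formula for exactly this commutation relation. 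Normal ordered expansions are unique: the monomials $Y^aX^b$ form a basis of the Weyl algebra. Hence the coefficients must agree, and that alone proves the corollary. I would present this short argument as the main proof, and add the combinatorial identity as an independent check.

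For the combinatorial route, I would count pairs $(\lambda,\phi)$, where $\lambda\in\mathcal I_{m-\ell,\ell}$ and $\phi$ is a placement of $k$ non-attacking rooks on $B_\lambda$. Under the bijection of Theorem~\ref{thr1}, $\lambda$ corresponds to a word with $\ell$ letters $X$ and $m-\ell$ letters $Y$. A cell of $B_\lambda$ in column $j$ corresponds to a pair consisting of the $j$-th $X$ from the left and a $Y$ standing to its right. A rook placement is therefore a set of $k$ disjoint pairs $(X,Y)$ with the $X$ before the $Y$; these are exactly the contractions in Wick's theorem. So the sum counts words of length $m$ together with a partial matching of $k$ pairs, each pair being an $X$ followed later by a $Y$.

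We count these directly. Choose the $2k$ positions occupied by matched letters, and the positions of the $\ell-k$ unmatched $X$'s; the remaining $m-\ell-k$ positions hold the unmatched $Y$'s. This gives the multinomial $\frac{m!}{(2k)!(\ell-k)!(m-\ell-k)!}$. The $2k$ matched positions are then split into $k$ pairs, which can be done in $(2k-1)!!=\frac{(2k)!}{2^kk!}$ ways. In each pair the earlier position is labelled $X$ and the later one $Y$, so the labels are forced. Multiplying gives $\frac{m!}{2^kk!(\ell-k)!(m-\ell-k)!}$, as required.

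The main obstacle is to check that this correspondence between cells and (X, later Y) pairs respects the orientation conventions of the paper. In particular, the columns of $B_\lambda$ have length $\lambda_j$, which is the number of $Y$'s to the right of the $j$-th $X$ in \eqref{eqbij1}. We also need the non-attacking condition to match disjointness of the pairs: distinct columns means distinct $X$'s, and distinct rows means distinct $Y$'s. Row $i$ corresponds to the $i$-th $Y$ counted from the right, so this holds. With these checks the identity follows, and the uniqueness argument via \eqref{Yamazaki} serves as a consistency check.
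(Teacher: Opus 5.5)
Your argument is correct, but it reaches the closed form by a different route from the paper. The paper's proof shares your first step: Corollary~\ref{BinomialRook} at $s=0$ gives $\mathcal{J}_{0,1,1}(m,\ell,k)=\sum_{\lambda\in\mathcal{I}_{m-\ell,\ell}}r_k(B_\lambda)$. It then imports the value of this rook sum from Varvak's Theorem~5.1 \cite{AV2005}, i.e.\ \eqref{WBinRook} combined with the known closed form \eqref{WeylBinCoEx}, and states the identity \eqref{IDWeylVarvak} only afterwards as a consequence.

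You avoid that external citation in two ways. First, once \eqref{WeylBinCoEx} is substituted, the displayed formula is \eqref{Yamazaki} word for word (up to renaming $n$ as $m$), and the paper already derived \eqref{Yamazaki} from the splitting formula. So the corollary as stated needs neither Corollary~\ref{BinomialRook} nor uniqueness. Uniqueness of normal ordered forms (the $Y^aX^b$ form a basis, legitimately since $h\neq 0$) is what then turns your first step into \eqref{IDWeylVarvak}, by an algebraic rather than combinatorial argument.

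Second, your contraction count is a self-contained bijective proof of \eqref{IDWeylVarvak}, and your orientation checks are right. The $j$-th $X$ from the left has exactly $\lambda_j$ letters $Y$ to its right. The cells of column $j$ correspond to the last $\lambda_j$ of the $Y$'s, with row $i$ being the $i$-th $Y$ from the right. Non-attacking then means the pairs are disjoint.

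The paper's route buys the conceptual identification of $\mathcal{J}_{0,1,1}$ with Varvak's rook-theoretic Weyl binomial coefficients, which is the template it later deforms in the $q$ and $(p,q)$ cases. Yours buys independence from \cite{AV2005}. The contraction picture is also exactly the kind of argument the paper asks for in its remark after \eqref{IDShiftVarvak}. If you allow several $X$'s to attach to the same later $Y$ (file placements), your matchings become set partitions of the $m-\ell+k$ marked positions into $m-\ell$ blocks whose maxima are the $Y$'s. That directly gives the count $\binom{m}{m-\ell+k}S(m-\ell+k,m-\ell)$.
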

\begin{proof}
In this case $Z_1=I$ and \eqref{pqdefgenWeyl} reduce to the commutation relation $XY=YX+hI$ of the Weyl algebra. From \eqref{BinomMain}, one finds
\begin{equation}\label{WeylBinom}
(X+Y)^m=\sum_{\ell=0}^m \sum_{k=0}^{\ell} h^{k} {\mathcal J}_{0,1,1}(m,\ell,k) \,Y^{m-\ell-k}X^{\ell-k}.
\end{equation}
The structure is already as expected. It remains to be shown that ${\mathcal J}_{0,1,1}(m,\ell,k)=\left\{ m \atop \ell \right\}_k$. Recall from \cite[Theorem 5.1]{AV2005} that $\left\{ m \atop \ell \right\}_k$ can be expressed as the sum of the $k$-th rook numbers over all Ferrers boards $B$ contained in $[m-\ell ] \times [\ell ]$, i.e., 
\begin{equation}\label{WBinRook}
\left\{ m \atop \ell \right\}_k=\sum_{B \subset [m-\ell ] \times [\ell ]} r_k(B).
\end{equation}
For the right-hand side, we can equivalently sum over all partitions $\lambda \in \mathcal{I}_{m-\ell,\ell}$ and consider the associated Ferrers boards $B_{\lambda}$ having $\lambda_j$ boxes in the $j$-th column, for $1\leq j \leq \ell$. Thus, it remains to show that
\begin{equation}\label{WeylBinomRel}
{\mathcal J}_{0,1,1}(m,\ell,k) =\sum_{\lambda \in \mathcal{I}_{m-\ell,\ell}} r_k(B_{\lambda}).
\end{equation}
This is exactly the contents of Corollary~\ref{BinomialRook} for $s=0$.
\end{proof}

Combining \eqref{WeylBinCoEx} and \eqref{WBinRook} yields (note that Varvak \cite[Theorem 5.1]{AV2005} gave a combinatorial proof) 
\begin{equation}\label{IDWeylVarvak}
\sum_{B \subset [m-\ell ] \times [\ell ]} r_k(B)=\frac{m!}{2^k k! (\ell -k)!(m-\ell - k)!}.
\end{equation}
 
Let us consider the concrete representation of the Weyl algebra by operators $X$ and $D$ with $(Xf)(x)=xf(x)$ and $(Df)(x)=f'(x)$, thus $X\rightsquigarrow D$ and $Y \rightsquigarrow X$. If we consider $X$ and $hD$, we have $(hD)X-X(hD)=hI$. In this concrete representation, \eqref{WeylBinom} reads as
\begin{equation}\label{WeylBinCon}
(X+hD)^n=\sum_{m=0}^n \sum_{k=0}^{m} h^{m} \left\{ n \atop m \right\}_k \,X^{n-m-k}D^{m-k}.
\end{equation}
Using $m'=n-m$ instead of $m$, and using the symmetry $\left\{ n \atop n-m \right\}_k=\left\{ n \atop m \right\}_k$, this can be found as \cite[Corollary 1]{JC2010}. Writing $\ell = m-k$, and changing the order of summation, we have
$$
(X+hD)^n=\sum_{\ell=0}^n \left\{ \sum_{m=\ell}^{n} h^{m-\ell} \left\{ n \atop m \right\}_{m-\ell} \,X^{n-2m+\ell} \right\} h^{\ell }D^{\ell}.
$$
Using $k=m-\ell$, the inner sum equals, upon using $\left\{ n \atop k+\ell \right\}_{k}={n\choose \ell} \left\{ n -\ell \atop k \right\}_{k}$, 
$$
\sum_{k=0}^{n-\ell} h^{k} \left\{ n \atop k+\ell \right\}_{k} \,X^{n-\ell-2k}={n\choose \ell}\sum_{k=0}^{n-\ell} h^{k} \left\{ n- \ell \atop k \right\}_{k} \,X^{n-\ell-2k}={n\choose \ell}H_{n-\ell}(X,h),
$$where we introduced the following variant of the Hermite polynomials \cite[Equation (1.14)]{JC2010},
\begin{equation}\label{DefHermite}
H_n(x,h)=\sum_{k=0}^{\lfloor \frac{n}{2}\rfloor } \left\{ n \atop k \right\}_k h^kx^{n-2k}=\sum_{k=0}^{\lfloor \frac{n}{2}\rfloor }\frac{n!}{2^k k! (n-2k)!}h^kx^{n-2k}.
\end{equation}
Thus, \eqref{WeylBinCon} is equivalent to the following variant of the {\em Burchnall identity} \cite[Theorem 1]{JC2010},
\begin{equation}\label{WeylBurch}
(X+hD)^n= \sum_{k=0}^{n} \left({n \atop k}\right) H_{n-k}(X,h) (hD)^{k},
\end{equation}
see \cite{BU1941} for the original version of this identity, and \cite{PRIT2009} for some discussions. Note that we can equivalently write $H_n(x,h)=\sum_{2k \leq n} {\mathcal J}_{0,1,1}(n,k,k) h^kx^{n-2k}$.

\subsubsection{The $q$-Weyl algebra}
As generalization of the situation considered in the preceding corollary, we can choose $s=0, p=1$ and $q$ arbitrary, implying that \eqref{pqdefgenWeyl} reduce to the commutation relation $XY=qYX+hI$ of the $q$-Weyl algebra. From \eqref{BinomMain}, one finds
$$
(X+Y)^m=\sum_{\ell=0}^m \sum_{k=0}^{\ell} h^{k} {\mathcal J}_{0,1,q}(m,\ell,k) \,Y^{m-\ell-k}X^{\ell-k},
$$
so that we identify ${\mathcal J}_{0,1,q}(m,\ell,k)$ with the $q$-Weyl binomial coefficient $\left\{ m \atop \ell \right\}_{k|q}$ defined by Varvak \cite[Definition 4]{AV2005} and determined explicitly by Cigler \cite[Equation (4.13)]{JC2010}. As a $q$-analog to \eqref{WBinRook} we find, by using \eqref{qBinomCoeff}, that
\begin{equation}\label{qWBinRook}
\left\{ m \atop \ell \right\}_{k|q}=\sum_{B \subset [m-\ell ] \times [\ell ]} R_{0,1;q}[B,k].
\end{equation}
Similar to the preceding section, we can consider the concrete representation of the $q$-Weyl algebra $X\rightsquigarrow hD_q$ and $Y \rightsquigarrow X$ with $(hD_q)X-qX(hD_q)=hI$ to obtain the $q$-analog to \eqref{WeylBinCon},
$$
(X+hD_q)^n=\sum_{m=0}^n \sum_{k=0}^{m} h^{m} \left\{ n \atop m \right\}_{k|q} \,X^{n-m-k}D_q^{m-k},
$$
which is exactly \cite[Equation (4.12)]{JC2010}. In this context, Cigler \cite[Equation (4.14)]{JC2010} defined a $q$-deformed version of the Hermite polynomials given in \eqref{DefHermite} by
\begin{equation}\label{qHermite}
H_n(x,h|q)=\sum_{k=0}^{\lfloor \frac{n}{2}\rfloor } \left\{ n \atop k \right\}_{k|q} h^kx^{n-2k}.
\end{equation}
Note that we can write this equivalently as $H_n(x,h|q)=\sum_{2k \leq n} {\mathcal J}_{0,1,q}(n,k,k) h^kx^{n-2k}$. Similar to the undeformed case, we can write
\begin{equation}\label{qBurch}
(X+hD_q)^n=\sum_{\ell=0}^n \left\{ \sum_{k=0}^{n-\ell} h^{k} \left\{ n \atop k+\ell  \right\}_{k|q} \,X^{n-\ell -2k}\right\} (hD_q)^{\ell}.
\end{equation}
In the undeformed case, we used the identity $\left\{ n \atop k+\ell \right\}_{k}={n\choose \ell} \left\{ n -\ell \atop k \right\}_{k}$ to express the inner bracket as Hermite polynomial $H_{n-\ell}(X,h)$. However, in the $q$-deformed case, it seems that no simple relation holds between $\left\{ n \atop k+\ell \right\}_{k|q}$ and $\left\{ n -\ell \atop k \right\}_{k|q}$. Thus, there is no  immediate $q$-analog to \eqref{WeylBurch} and Cigler's $q$-analog \cite[Theorem 4]{JC2010} involves slightly different polynomials. However, we can introduce polynomials depending on two parameters $n$ and $\ell$ (with $ 0 \leq \ell \leq n$) by
\begin{equation}\label{qHermite2}
\hat{H}_{n,\ell}(x,h|q):=\sum_{k=0}^{n-\ell} \left\{ n \atop k + \ell \right\}_{k|q} h^k x^{n-\ell-2k}.
\end{equation}
\begin{corollary}[A $q$-analog to Burchnall identity] Using the polynomials $\hat{H}_{n,\ell}(x,h|q)$ defined in \eqref{qHermite2}, identity \eqref{qBurch} can be written equivalently as
\begin{equation}\label{qBurch2}
(X+hD_q)^n=\sum_{\ell=0}^n \hat{H}_{n,\ell}(X,h|q) (hD_q)^{\ell}.
\end{equation}
\end{corollary}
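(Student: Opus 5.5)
The claim is a reindexing of \eqref{qBurch}, so I will start from that identity and match its inner bracket with the definition \eqref{qHermite2}.

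First I will re-derive \eqref{qBurch} from the $q$-analog of \eqref{WeylBinCon}:
$$
(X+hD_q)^n=\sum_{m=0}^n \sum_{k=0}^{m} h^{m} \left\{ n \atop m \right\}_{k|q} X^{n-m-k}D_q^{m-k}.
$$
This $q$-analog comes from \eqref{BinomMain} with $s=0$, $p=1$, together with the identification ${\mathcal J}_{0,1,q}(m,\ell,k)=\left\{ m \atop \ell \right\}_{k|q}$. Here $X\rightsquigarrow hD_q$ and $Y\rightsquigarrow X$, and I use that $(hD_q)^{m-k}=h^{m-k}D_q^{m-k}$.

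Next I substitute $\ell=m-k$, so that $m=k+\ell$. The power of $h$ splits as $h^m=h^k h^{\ell}$, and the power of $X$ becomes $X^{n-m-k}=X^{n-\ell-2k}$. The summation range $0\le k\le m\le n$ turns into $0\le \ell\le n$ together with $0\le k\le n-\ell$. After interchanging the two sums, the inner sum over $k$ is exactly the bracket in \eqref{qBurch}, multiplied on the right by $h^{\ell}D_q^{\ell}=(hD_q)^{\ell}$.

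By definition \eqref{qHermite2}, that bracket equals $\hat{H}_{n,\ell}(X,h|q)$, with $x$ replaced by the multiplication operator $X$. Since all powers of $X$ stand to the left of $D_q$, this substitution is legitimate, and \eqref{qBurch2} follows.

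The only point needing care is the boundary terms. For $k>\min(\ell',\,n-\ell')$ the coefficient $\left\{ n \atop \ell'\right\}_{k|q}$ vanishes, where $\ell'=k+\ell$. This holds because the rook number $R_{0,1;q}[B,k]$ is zero on boards inside $[n-\ell']\times[\ell']$ whenever $k>\min(\ell',\,n-\ell')$. Consequently the extended range of $k$ in \eqref{qHermite2} adds only zero terms, and negative exponents of $X$ never actually occur.
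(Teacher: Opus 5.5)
Your proposal is correct and is essentially the paper's argument. The corollary is just \eqref{qBurch}, which comes from the $q$-analog of \eqref{WeylBinCon} via the substitution $\ell=m-k$ and an interchange of sums, with the inner bracket renamed $\hat{H}_{n,\ell}(X,h|q)$ using \eqref{qHermite2}. Your extra remark is a useful detail the paper leaves implicit: the terms with $2k>n-\ell$ (those that would carry negative powers of $X$) vanish because the non-attacking rook numbers $R_{0,1;q}[B,k]$ are zero there.
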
 
This can be regarded as a $q$-analog to Burchnall's identity \eqref{WeylBurch}. As mentioned above, for $q \neq 1$, there seems to be no simple relation between $\hat{H}_{n,\ell}(x,h|q)$ and $H_n(x,h|q)$, but for $q=1$ one has $\hat{H}_{n,\ell}(x,h|1)={n\choose \ell} H_{n-\ell}(x,h|1)={n\choose \ell} H_{n-\ell}(x,h)$, recovering \eqref{WeylBurch} from \eqref{qBurch2}. 

It is worthwhile to mention that another generalized $q$-deformation of Burchnall's identity was introduced in \cite[Equation (6.4)]{TMMS2011} as follows,
\begin{equation}\label{qHermite3}
(X+h\mathcal{E}_{s;1|q})^n=\sum_{j=0}^{n}\binom{n}{j}_q\mathcal{H}_{n;j}^{(s)}(X,h;q)h^j\mathcal{E}_{s;1|q}^{j},
\end{equation}
where $\mathcal{E}_{s;1|q}:=X^sD_{q}$ for $s\in \mathbb{N}_0$ and the polynomials $\mathcal{H}_{n;j}^{(s)}(X,h;q)$ are given in \cite[Equation (6.5)]{TMMS2011}. The operators $X$ and $\mathcal{E}_{s;1|q}$ satisfy the $q$-commutation relation (see \cite{TMMS2011} for more details) 
$$(h\mathcal{E}_{s;1|q})X-qX(h\mathcal{E}_{s;1|q})=hX^s.$$
For $s=0$, one has $\mathcal{E}_{0;1|q}=D_q$, and by comparing \eqref{qBurch2} with \eqref{qHermite3}, it follows that 
$$
\hat{H}_{n,\ell}(X,h|q)=\binom{n}{\ell}_q\mathcal{H}_{n;\ell}^{(0)}(X,h;q).
$$
On the other hand, for $q=1$ but arbitrary $s\in \mathbb{N}$, we note that $\mathcal{E}_{s}:=X^sD$ and $X$ satisfy $(h\mathcal{E}_{s})X-X(h\mathcal{E}_{s})=hX^s$ when acting on functions. Thus, one may use \eqref{ViskovS} to write
\begin{equation}\label{BurchS}
(X+h\mathcal{E}_{s})^n= \sum_{m=0}^n \binom{n}{m} h^{n-m}\, X^{(n-m)(s-1)} \mathfrak{B}^{(s-1)}_{n-m}(X^{2-s}/h) \mathcal{E}_{s}^{m},
\end{equation}
providing also a relation between the polynomials $\mathcal{H}_{n;ml}^{(s)}(X,h;1)$ and $\mathfrak{B}^{(s-1)}_{n-m}(X^{2-s}/h)$.
\subsubsection{A $(p,q)$-analog to the Weyl algebra}
Now, let us turn to the case $s=0$, but where $p$ and $q$ are not equal to 1. Then \eqref{pqdefgenWeyl} reduce to
\begin{equation}\label{pqWeyl}
XY-qYX = h Z_p, \,\, Z_pY=pYZ_p, \,\, XZ_p=pZ_pX, 
\end{equation}
which is a kind of $(p,q)$-deformation of the Weyl algebra. In fact, the relations \eqref{pqWeyl} (for $h=1$) are exactly the defining relations of the algebra $H_{p^{-1},q}$ considered by Gaddis \cite{JG2016} in an algebraic fashion. Specializing the parameter $p$ further, one obtains other algebras considered before. For example, letting $p=q$, we obtain (for $h=1$) the relations $XY-qYX = Z, \,ZY=qYZ, \,XZ=qZX$ discussed by Kirkman and Small \cite{EKLS1993}.

For this $(p,q)$-deformed Weyl algebra we have from \eqref{eqBinoMain} that
\begin{equation}\label{pqBurch0}
(X+Y)^m=\sum_{\ell=0}^m \sum_{k=0}^{\ell} h^{k} {\mathcal J}_{0,p,q}(m,\ell,k) \,Y^{m-\ell-k}Z_{p}^{k}X^{\ell-k}
\end{equation}
with ${\mathcal J}_{0,p,q}(m,\ell,k) $ given by \eqref{eqBinoMain}. Since ${\mathcal J}_{0,1,q}(m,\ell,k) =\left\{ n \atop \ell \right\}_{k|q}$ this motivates the introduction of $(p,q)$-{\em deformed Weyl binomial coefficients} by letting
$$
\left\{ m \atop \ell \right\}_{k|p;q}:={\mathcal J}_{0,p,q}(m,\ell,k),
$$
reducing for $p=1$ to the $q$-Weyl binomial coefficients, i.e., $\left\{ m \atop \ell \right\}_{k|1;q}=\left\{ m \atop \ell \right\}_{k|q}$. By \eqref{pqBinomCoeff}, one has
\begin{equation}
\left\{ m \atop \ell \right\}_{k|p;q} = \sum\limits_{\lambda \in \mathcal{I}_{m-\ell,\ell}} \mathscr{R}_{1; p,q}[B_{\lambda},k].
\end{equation}
In the same fashion, one can define in analogy to \eqref{qHermite} certain $(p,q)$-deformed Hermite polynomials,
$$
H_n(x,h|p;q):=\sum_{k=0}^{\lfloor \frac{n}{2}\rfloor } \left\{ n \atop k \right\}_{k|p;q} h^kx^{n-2k}=\sum_{k=0}^{\lfloor \frac{n}{2}\rfloor } {\mathcal J}_{0,p,q}(n,k,k) h^kx^{n-2k},
$$
where $H_n(x,h|1;q)=H_n(x,h|q)$ from above. However, as we saw already in the $q$-deformed case, the $q$-deformed Hermite polynomials do not appear in a $q$-analog to Buchnall's identity \eqref{qBurch2}. Let us point out that recently three slightly different variants of $(p,q)$-Hermite polynomials have been introduced in \cite{DAEA2018,AR2019,PNS2019}

A concrete representation of the $(p,q)$-Weyl algebra is given by $X\rightsquigarrow hD_{p,q}, Y \rightsquigarrow X$ and $Z_p\rightsquigarrow \epsilon_p$ with $(\epsilon_pf)(x)=f(px)$. Thus, \eqref{pqBurch0} becomes in this concrete representation
\begin{equation}\label{pqBurch}
(X+hD_{p,q})^n=\sum_{\ell=0}^n \sum_{k=0}^{\ell} h^{\ell} \left\{ n \atop \ell \right\}_{k|p;q} \,X^{n-\ell-k}\epsilon_{p}^{k}D_{p,q}^{\ell-k}.
\end{equation}
Doing the same manipulations as in the $q$-deformed case, this is equivalent to
$$
(X+hD_{p,q})^n=\sum_{\ell=0}^n \left\{ \sum_{k=0}^{n-\ell} h^{k} \left\{ n \atop k+\ell \right\}_{k|p;q} \,X^{n-\ell-2k}\epsilon_{p}^{k} \right\} (hD_{p,q})^{\ell}.
$$
Let us introduce in analogy to \eqref{qHermite2} the {\em operator-valued polynomials}
\begin{equation}\label{pqHermite}
\hat{H}_{n,\ell}(X,\epsilon_{p},h|p;q):=\sum_{k=0}^{n-\ell} \left\{ n \atop k + \ell \right\}_{k|p;q} h^k X^{n-\ell-2k} \epsilon_{p}^{k}=\sum_{k=0}^{n-\ell}  {\mathcal J}_{0,p,q}(n,k+\ell,k) h^k X^{n-\ell-2k} \epsilon_{p}^{k}.
\end{equation}
Then we have the following $(p,q)$-analog to \eqref{qBurch2}.
\begin{corollary}[A $(p,q)$-analog to Burchnall identity] Using the polynomials $\hat{H}_{n,\ell}(X,\epsilon_{p},h|p;q)$ defined in \eqref{pqHermite}, identity \eqref{pqBurch} can be written equivalently as
\begin{equation}
(X+hD_{p,q})^n=\sum_{\ell=0}^n \hat{H}_{n,\ell}(X,\epsilon_{p},h|p;q) (hD_{p,q})^{\ell}.
\end{equation}
\begin{remark}
It is not unusual to introduce in the $(p,q)$-deformed situation certain operator valued coefficients, in particular in the physical literature. Denoting by $S_{p,q}(n,k)$ the $(p,q)$-deformed Stirling numbers of the second kind, one has in the present context the normal ordering result (see \cite{TLM1,LO2020,LO2021})
$$
(XD_{p,q})^n=\sum_{k=0}^n S_{p,q}(n,k)X^k \varepsilon_p^{n-k}D_{p,q}^k.
$$ 
Using $X^k \varepsilon_p^{n-k}=p^{-k(n-k)}\varepsilon_p^{n-k}X^k $ and introducing {\em operator valued Stirling numbers} 
$$\hat{S}_{p,q}(n,k; \varepsilon_p)=p^{-k(n-k)}S_{p,q}(n,k)\varepsilon_p^{n-k},$$
we can write this in standard form $(XD_{p,q})^n=\sum_{k=0}^n \hat{S}_{p,q}(n,k; \varepsilon_p) X^k D_{p,q}^k$. This point of view was introduced by Katriel and Kibler \cite{KK1992}, see also the recent papers \cite{HOT2026,OT2024} and the literature given therein.
\end{remark}
\end{corollary}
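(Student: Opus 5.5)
The identity is obtained by rewriting the concrete form \eqref{pqBurch} of \eqref{pqBurch0}. Regrouping is justified because both sides act on the same space of functions and we only move finitely many terms around.

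\textbf{Step 1: check the representation.} First I verify that $X\rightsquigarrow hD_{p,q}$, $Y\rightsquigarrow X$ (multiplication by $x$) and $Z_p\rightsquigarrow \epsilon_p$ satisfy \eqref{pqWeyl}. The first relation follows from the $(p,q)$-Leibniz rule
$$D_{p,q}(xf)=f(px)+qx\,(D_{p,q}f)(x),$$
which gives $hD_{p,q}X-qX\,hD_{p,q}=h\epsilon_p$. The other two relations are $\epsilon_p X=pX\epsilon_p$ and $D_{p,q}\epsilon_p=p\,\epsilon_p D_{p,q}$, both immediate on monomials. Theorem~\ref{thmbinomial} only uses the defining relations, so \eqref{pqBurch0} transfers to operators. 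Replacing $X^{\ell-k}$ by $(hD_{p,q})^{\ell-k}$ gives the factor $h^{k}\cdot h^{\ell-k}=h^\ell$, and this yields \eqref{pqBurch}.

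\textbf{Step 2: reindex.} Set $j=\ell-k$, the power of $D_{p,q}$, and keep $k$. Then the monomial reads $X^{n-j-2k}\epsilon_p^{k}D_{p,q}^{j}$ with coefficient
$$h^{j+k}\left\{ n \atop k+j \right\}_{k|p;q}.$$
The range $0\le k\le \ell\le n$ becomes $0\le j\le n$, $0\le k\le n-j$. Terms with $n-j-2k<0$ vanish, since ${\mathcal J}_{0,p,q}$ is a sum over $\mathcal{I}_{n-\ell,\ell}$ of rook numbers $\mathscr{R}_{1;p,q}[B_\lambda,k]$, which are zero when $k$ exceeds the number of rows $n-\ell$. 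Factor $h^{j}D_{p,q}^{j}=(hD_{p,q})^{j}$ to the right, which is legitimate because it already stands rightmost. What remains in front is exactly the inner sum
$$\sum_{k} h^k\left\{ n \atop k+j \right\}_{k|p;q} X^{n-j-2k}\epsilon_p^k=\hat{H}_{n,j}(X,\epsilon_p,h|p;q)$$
by \eqref{pqHermite}. Renaming $j\to\ell$ gives the claim.

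The only delicate point is the vanishing of out-of-range coefficients, which makes the summation bounds in \eqref{pqHermite} harmless. It is settled by the rook interpretation \eqref{pqBinomCoeff}: a board inside $[n-\ell]\times[\ell]$ admits no placement of more than $n-\ell$ non-attacking rooks.
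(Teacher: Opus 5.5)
Your proposal is correct and follows the paper's route: substitute the representation $X\rightsquigarrow hD_{p,q}$, $Y\rightsquigarrow X$, $Z_p\rightsquigarrow\epsilon_p$ into \eqref{pqBurch0}, take the power $\ell-k$ of $D_{p,q}$ as the new outer index, and factor $(hD_{p,q})^{\ell}$ to the right, so that the remaining inner sum is exactly \eqref{pqHermite}. Your check of the representation via the $(p,q)$-Leibniz rule, and your rook-theoretic argument that ${\mathcal J}_{0,p,q}(n,\ell,k)=0$ for $k>n-\ell$, spell out points the paper leaves implicit. The relation $\epsilon_pX=pX\epsilon_p$ that you verify also gives the commutation $X^k\varepsilon_p^{n-k}=p^{-k(n-k)}\varepsilon_p^{n-k}X^k$ used in the accompanying remark.
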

Let us note that by choosing $p=q^2$ and $h=1$ in this case, the commutation relations \eqref{pqWeyl} become
\begin{equation}\label{Blumen}
XY-qYX = Z, \,\, ZY=q^2YZ, \,\, XZ=q^2ZX.
\end{equation}
For variables satisfying these relations, Blumen \cite[Lemma 2.1]{SB2006} derived the following binomial formula,
\begin{equation}\label{BinBlumen}
(X+Y)^n=\sum_{\alpha,\beta,\gamma \in \mathbb{Z}_+ \atop \alpha+2\beta+\gamma=n}\frac{[n]_q!}{[\alpha]_q![\gamma]_q! [2]_q[4]_q\cdots [2\beta]_q}Y^{\alpha}Z^{\beta}X^{\gamma}. 
\end{equation}
Note that this reduces for $Z=0$ to \eqref{BFPS}. For $p=q^2$ and $h=1$ we have from \eqref{pqBurch0} (with $Z \equiv Z_{q^2}$)
\begin{equation}\label{Binpqqh1}
(X+Y)^n=\sum_{\ell=0}^n \sum_{k=0}^{\ell} {\mathcal J}_{0,q^2,q}(n,\ell,k) \,Y^{n-\ell-k}Z^{k}X^{\ell-k}.
\end{equation}
Observe that the exponents in Equation \eqref{Binpqqh1} satisfy $(n-\ell-k)+2k+(\ell -k)=n$, i.e., the same condition as in \eqref{BinBlumen}. Hence, by assuming that ${\mathcal J}_{0,p,q}(n,\ell,k)=0$ for $n-l-k<0$, we can write \eqref{Binpqqh1} as
\begin{equation}\label{Binpqqh2}
(X+Y)^n=\sum_{k, (\ell-k)\in \mathbb{Z}_+ \atop (n-\ell-k) \in\mathbb{Z}_+}{\mathcal J}_{0,q^2,q}(n,\ell,k)Y^{n-\ell-k}Z^kX^{\ell-k}.
\end{equation}
Comparing \eqref{BinBlumen} with \eqref{Binpqqh2}, it follows that 
\begin{equation}\label{NOCBlumen}
{\mathcal J}_{0,q^2,q}(n,\ell,k)=\frac{[n]_q!}{[n-\ell-k]_q![\ell-k]_q![2]_q[4]_q\cdots[2k]_q}.
\end{equation}
\begin{remark}
Recalling the connection between ${\mathcal J}_{0,q^2,q}(n,\ell,k)$ and the $(p,q)$-deformed rook numbers $\mathscr{R}_{1; q^2,q}[B_{\lambda},k]$, see \eqref{pqBinomCoeff}, it would be interesting to give a combinatorial proof of \eqref{NOCBlumen}.
\end{remark}
\subsection{The shift algebra, the $q$-shift algebra and the $(p,q)$-analog}\label{BinomShift}
For $s=1$, one has from \eqref{BinomMain}
\begin{equation}\label{pqShiftBin}
(X+Y)^m=\sum_{\ell=0}^m \sum_{k=0}^{\ell} h^{k} {\mathcal J}_{1,p,q}(m,\ell,k) \,Y^{m-\ell}Z_{p}^{k}X^{\ell-k},
\end{equation}
where ${\mathcal J}_{1,p,q}(m,0,k)=1$ and, for $\ell \neq 0$, 
\begin{equation}\label{pqShiftBinC}
{\mathcal J}_{1,p,q}(m,\ell,k)=\sum\limits_{{\bf k}\in \{0,1\}^{\ell} \atop |{\bf k}|=k}  p^{{\mathcal P}_{\ell}^{\ast}({\bf k})}\sum\limits_{\lambda \in \mathcal{I}_{m-\ell,\ell}}  q^{{\mathcal Q}_{\ell|1}^{\ast}({\bf n}_{\lambda},{\bf k})} {\mathcal A}_{\ell|1;p,q}({\bf n}_{\lambda},{\bf k}),
\end{equation}
with 
$$
{\mathcal P}_{\ell}^{\ast}({\bf k})=\sum_{j=1}^{\ell-1}(1-k_{j+1})\sum_{l=1}^{j}k_l, \,\,\, {\mathcal Q}_{\ell|1}^{\ast}({\bf n}_{\lambda},{\bf k})=|\lambda|-\sum_{j=1}^{\ell}k_j\lambda_{\ell-j+1}, \,\,\, {\mathcal A}_{\ell|1;p,q}({\bf n}_{\lambda},{\bf k})=\prod_{j=1}^{\ell}\left[\lambda_{\ell - j+1} \right]_{p, q}^{k_j}.
$$
\subsubsection{The shift algebra} Let us specialize to $p=q=1$. From Corollary~\ref{BinomialRook}, we obtain for $s=1$ that 
$$
{\mathcal J}_{1,1,1}(m,\ell,k)=\sum\limits_{\lambda \in \mathcal{I}_{m-\ell,\ell}}r_k^{(1)}(B_{\lambda})=\sum\limits_{\lambda \in \mathcal{I}_{m-\ell,\ell}} f_k(B_{\lambda}),
$$
where we used that the 1-rook numbers are equal to the file numbers, $r_k^{(1)}(B_{\lambda})=f_k(B_{\lambda})$, see \eqref{RookRed}. Similar to above, we can write this equivalently as a sum over all boards $B$ contained in $[m-\ell]\times [\ell]$. If we define in analogy to \eqref{WBinRook} the {\em shift binomial coefficients} by
\begin{equation}\label{WBinFile}
\left \langle m \atop \ell \right\rangle_k :=\sum_{B \subset [m-\ell ] \times [\ell ]} f_k(B),
\end{equation}
then we find in analogy to \eqref{WeylBinomRel} that
\begin{equation}\label{ShiftBinomRel}
{\mathcal J}_{0,1,1}(m,\ell,k) =\sum_{\lambda \in \mathcal{I}_{m-\ell,\ell}} f_k(B_{\lambda})=\left \langle m \atop \ell \right\rangle_k .
\end{equation}
Thus, we have shown the following analog to Corollary~\ref{WeylBinCor} holding in the Weyl algebra.
\begin{corollary}\label{ShiftBinCor} Let $s=1, p=1,q=1$. Then $Z_1=I$ and \eqref{pqdefgenWeyl} reduce to the commutation relation $XY=YX+hY$ of the shift algebra, and one has 
$$
(X+Y)^m=\sum_{\ell=0}^m \sum_{k=0}^{\ell} h^{k} \left \langle m \atop \ell \right\rangle_k \,Y^{m-\ell} X^{\ell-k},
$$
where $\left \langle m \atop \ell \right\rangle_k$ are the shift binomial coefficients defined in \eqref{WBinFile}.
\end{corollary}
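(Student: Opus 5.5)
The plan is to specialize Theorem~\ref{thmbinomial} to $s=1$, $p=q=1$, and then to identify the resulting coefficients with file numbers via Corollary~\ref{BinomialRook}.

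First I check the algebra. For $p=1$ the relation \eqref{pqcr4} gives $Z_1=I$, and with $s=1$ the first relation in \eqref{pqdefgenWeyl} becomes $XY-YX=hY$. The remaining two relations become trivial. So the algebra is the shift algebra. Inserting $s=1$ into \eqref{BinomMain}, the exponent of $Y$ is $m-\ell+(s-1)k=m-\ell$, and $Z_1^k=I$. This gives
$$
(X+Y)^m=\sum_{\ell=0}^m\sum_{k=0}^{\ell}h^k{\mathcal J}_{1,1,1}(m,\ell,k)\,Y^{m-\ell}X^{\ell-k},
$$
which is exactly \eqref{pqShiftBin} at $p=q=1$.

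Next I identify the coefficient. Corollary~\ref{BinomialRook} with $s=1$ gives ${\mathcal J}_{1,1,1}(m,\ell,k)=\sum_{\lambda\in\mathcal{I}_{m-\ell,\ell}}r_k^{(1)}(B_\lambda)$. By \eqref{RookRed}, $r_k^{(1)}(B_\lambda)=f_k(B_\lambda)$. As a direct check, \eqref{StructureA} with $s=1$ reduces to $\prod_j\lambda_{\ell-j+1}^{k_j}$. Summed over ${\bf k}$ with $|{\bf k}|=k$, this counts choices of $k$ columns together with one cell in each chosen column, which is $f_k(B_\lambda)$.

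Finally, $\lambda\mapsto B_\lambda$ is a bijection between $\mathcal{I}_{m-\ell,\ell}$ and the Ferrers boards contained in $[m-\ell]\times[\ell]$. This is the same identification used in the proof of Corollary~\ref{WeylBinCor}. Hence the sum equals $\left\langle m \atop \ell\right\rangle_k$ as defined in \eqref{WBinFile}. Note that \eqref{ShiftBinomRel} is stated with the subscript $0$ where it should read $1$. The case $\ell=0$ is consistent, since ${\mathcal J}(m,0,0)=1$ and the only board is the empty one, with $f_0=1$.

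There is no real obstacle here. The only care needed is the board/partition bijection, including empty columns, and the $\ell=0$ edge case.
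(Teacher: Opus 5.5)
Your proposal is correct and follows the paper's own argument. It specializes \eqref{BinomMain} (equivalently \eqref{pqShiftBin}) to $s=1$, $p=q=1$, applies Corollary~\ref{BinomialRook} with $r_k^{(1)}=f_k$ from \eqref{RookRed}, and identifies the sum over $\lambda\in\mathcal{I}_{m-\ell,\ell}$ with the sum over Ferrers boards in $[m-\ell]\times[\ell]$. You are also right that ${\mathcal J}_{0,1,1}$ in \eqref{ShiftBinomRel} is a typo for ${\mathcal J}_{1,1,1}$.
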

Switching the order of summation and relabelling the indices, the identity of Corollary~\ref{ShiftBinCor} equals 
\begin{equation}\label{BinomShiftRe}
(X+Y)^m=\sum_{\ell =0}^m \sum_{k=0}^{\ell} h^{\ell-k}  \left \langle m \atop m-k\right\rangle_{\ell-k} \,Y^{k} X^{m-\ell}.
\end{equation}
Writing in Viskov's result \eqref{VisShift} the $k$-the Bell polynomial $B_k$ explicitly, one obtains
\begin{equation}\label{Viskov}
(X+Y)^m=\sum_{\ell=0}^m \sum_{k=0}^{\ell} \binom{m}{\ell} h^{\ell -k}  S(\ell,k) \, Y^k X^{m-\ell}.
\end{equation} 
Comparing \eqref{BinomShiftRe} and \eqref{Viskov}, we obtain $\left \langle m \atop m-k\right\rangle_{\ell-k} =\binom{m}{\ell} S(\ell,k) $. Some index manipulations yield the following result.
\begin{proposition}\label{ShiftBinExP} The shift binomial coefficients defined in \eqref{WBinFile} are given explicitly by
\begin{equation}\label{ShiftBinEx}
\left \langle m \atop \ell \right\rangle_{k} =\binom{m}{m-\ell+k} S(m-\ell+k,m-\ell). 
\end{equation}
\end{proposition}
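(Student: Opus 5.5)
The plan is to compare coefficients of the two normal-ordered expansions of $(X+Y)^m$ in the shift algebra, $XY-YX=hY$, and then reindex. Both expansions are already available.

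First, Corollary~\ref{ShiftBinCor} in the rewritten form \eqref{BinomShiftRe} gives
\[
(X+Y)^m=\sum_{\ell=0}^m\sum_{k=0}^{\ell}h^{\ell-k}\left\langle m\atop m-k\right\rangle_{\ell-k}Y^kX^{m-\ell}.
\]
Second, Viskov's formula \eqref{VisShift}, with the Bell polynomial expanded, gives \eqref{Viskov}:
\[
(X+Y)^m=\sum_{\ell=0}^m\sum_{k=0}^{\ell}\binom{m}{\ell}h^{\ell-k}S(\ell,k)\,Y^kX^{m-\ell}.
\]

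The key justification is that we may compare coefficients of $h^{\ell-k}Y^kX^{m-\ell}$. The normally ordered monomials $Y^aX^b$ form a PBW-type basis of the shift algebra over $\mathbb{C}[h]$; equivalently, we can realize the algebra faithfully, for instance by $Y\mapsto$ multiplication by $e^{hx}$ and $X\mapsto d/dx$. Both coefficients are polynomials in $h$, so comparing coefficients of $h$ as well is legitimate: either treat $h$ as a formal parameter, or note that the identity holds for infinitely many values of $h$.

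The comparison yields
\[
\left\langle m\atop m-k\right\rangle_{\ell-k}=\binom{m}{\ell}S(\ell,k)\qquad\text{for }0\le k\le\ell\le m.
\]

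Now reindex. Set $L:=m-k$ and $K:=\ell-k$. Then $\ell=m-L+K$ and $k=m-L$, so the identity becomes
\[
\left\langle m\atop L\right\rangle_K=\binom{m}{m-L+K}S(m-L+K,\,m-L),
\]
which is \eqref{ShiftBinEx} after renaming $L,K$ to $\ell,k$.

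It remains to check that the index ranges match. The constraints $0\le k\le\ell\le m$ translate into $0\le K\le L$ and $m-L+K\le m$, and the latter again says $K\le L$. So the identity covers exactly the range $0\le k\le\ell\le m$ on which $\left\langle m\atop\ell\right\rangle_k$ is defined.

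The only mildly delicate point is the uniqueness of normal-ordered coefficients. Everything else is bookkeeping.

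As an independent sanity check, which is optional, one can verify the formula combinatorially from the definition \eqref{WBinFile}. Choose the $k$ columns that receive a file-rook and the positions of the rooks in them, and sum over boards in $[m-\ell]\times[\ell]$. This should biject to choosing $m-\ell+k$ elements out of $m$ and partitioning them into $m-\ell$ blocks. I would not rely on this for the proof.
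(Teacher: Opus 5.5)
Your proposal is correct and takes the same route as the paper. The paper likewise compares \eqref{BinomShiftRe} with Viskov's expansion \eqref{Viskov} to obtain $\left\langle m \atop m-k\right\rangle_{\ell-k}=\binom{m}{\ell}S(\ell,k)$, and then reindexes via $L=m-k$, $K=\ell-k$.

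Your justification of the coefficient comparison through the PBW basis of normal-ordered monomials is a welcome addition, since the paper leaves it implicit. Comparing powers of $h$ separately is not actually needed: each monomial $Y^kX^{m-\ell}$ carries the single factor $h^{\ell-k}$ on both sides, so for fixed $h\neq 0$ you can simply divide by it.
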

Combining \eqref{WBinFile} and \eqref{ShiftBinEx}, we obtain the following analog to \eqref{IDWeylVarvak},
\begin{equation}\label{IDShiftVarvak}
\sum_{B \subset [m-\ell ] \times [\ell ]} f_k(B)=\binom{m}{m-\ell+k} S(m-\ell+k,m-\ell).
\end{equation}
\begin{remark} Varvak's proof \cite[Theorem 5.1]{AV2005} of \eqref{IDWeylVarvak} was in a combinatorial fashion. The above derivation of \eqref{IDShiftVarvak} was slightly indirect and algebraically. It would be nice to have a combinatorial proof analogous to the one of \eqref{IDWeylVarvak}.
\end{remark}

Similar to the case of the Weyl algebra, where an operational interpretation of the binomial formula is given by the Burchnall identity \eqref{WeylBurch}, we can use here the operators $X$ and $hXD$ (the {\em Euler operator}). They satisfy $(hXD)X-X(hXD)=hX$, i.e., they give a concrete representation of the commutation relations of the shift algebra (see \cite[Section 8.1.2]{TMMS2016} and \cite[Section 8.4.3]{TMMS2016} for some references). Using Viskov's identity \eqref{VisShift}, we obtain (observe that this is the case $s=1$ of \eqref{BurchS})
$$
(X+hXD)^m=\sum_{\ell=0}^m \binom{m}{\ell} h^{m-\ell} \, B_{m-\ell}(X/h)(hXD)^{\ell}. 
$$
Expanding the Bell polynomials and using the normal ordering result $(XD)^n=\sum_{k}S(n,k)X^kD^k$, this yields the explicit expression (derived in an equivalent form by Mikhailov \cite{VVM1985})
\begin{equation}
(X+hXD)^m=\sum_{k=0}^{m}\sum_{\ell=k}^m \sum_{r=0}^{m-\ell} h^{m-r} \binom{m}{\ell}  S(m-\ell,r) S(\ell,k)X^{k+r}D^k. 
\end{equation}
Very similar expressions have been determined by Mikhailov \cite{VVM1983} for the expansion of $(D+XD)^m$, see the discussion in \cite[Section 6.1.1]{TMMS2016}. As early as 1887, d'Ocagne \cite{MO1887} considered an expansion of $(X+DX)^m$.

\subsubsection{The $q$-shift algebra} Let us specialize to $p=1$. Then \eqref{pqShiftBinC} reduces to
\begin{equation}\label{qShiftBin}
(X+Y)^m=\sum_{\ell=0}^m \sum_{k=0}^{\ell} h^{k} {\mathcal J}_{1,1,q}(m,\ell,k) \,Y^{m-\ell}X^{\ell-k},
\end{equation}
where ${\mathcal J}_{1,1,q}(m,\ell,k)$ is given by \eqref{qBinomCoeff} (for $s=1$). 
 
Let us denote by $f_k(B_{\lambda}|q):=R_{1,1;q}[B_{\lambda},k]$ the $q$-deformed $k$-th file number of the board $B_{\lambda}$ associated to $\lambda \in \mathcal{I}_{m-\ell,\ell}$. We can then define a $q$-analog to \eqref{WBinFile} -- the {\em $q$-shift binomial coefficients} - by
\begin{equation}\label{qWBinFile}
\left \langle m \atop \ell \right\rangle_{k|q} :=\sum_{B \subset [m-\ell ] \times [\ell ]} f_k(B|q).
\end{equation}
Clearly, ${\mathcal J}_{1,1,q}(m,\ell,k)=\left \langle m \atop \ell \right\rangle_{k|q}$. 
\begin{corollary}\label{qShiftBinCor} Let $s=1, p=1$. Then $Z_1=I$ and \eqref{pqdefgenWeyl} reduce to the commutation relation $XY=qYX+hY$ of the $q$-shift algebra, and one has 
$$
(X+Y)^m=\sum_{\ell=0}^m \sum_{k=0}^{\ell} h^{k} \left \langle m \atop \ell \right\rangle_{k|q} \,Y^{m-\ell} X^{\ell-k},
$$
where $\left \langle m \atop \ell \right\rangle_{k|q}$ are the $q$-shift binomial coefficients defined in \eqref{qWBinFile}.
\end{corollary}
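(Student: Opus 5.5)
The statement is a specialization of results already established, so the plan is to read it off from the general binomial formula rather than to compute anything new.

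\textbf{Step 1: the commutation relation.} Put $s=1$ and $p=1$ in \eqref{pqdefgenWeyl}. By \eqref{pqcr4} we have $Z_1=I$, so the first relation becomes $XY-qYX=hY^1Z_1=hY$, i.e.\ $XY=qYX+hY$. This is the $q$-shift algebra. The remaining relations $Z_1Y=YZ_1$ and $XZ_1=Z_1X$ hold trivially.

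\textbf{Step 2: specialize the binomial formula.} Substitute $s=1$, $p=1$ into Theorem~\ref{thmbinomial}, or equivalently into \eqref{qShiftBin}. The monomial $Y^{m-\ell+(s-1)k}Z_p^kX^{\ell-k}$ becomes $Y^{m-\ell}X^{\ell-k}$, since $(s-1)k=0$ and $Z_1^k=I$. This gives
$$
(X+Y)^m=\sum_{\ell=0}^m\sum_{k=0}^{\ell}h^k\,{\mathcal J}_{1,1,q}(m,\ell,k)\,Y^{m-\ell}X^{\ell-k}.
$$

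\textbf{Step 3: identify the coefficients.} By Corollary~\ref{qBinomialRook} with $s=1$,
$$
{\mathcal J}_{1,1,q}(m,\ell,k)=\sum_{\lambda\in\mathcal{I}_{m-\ell,\ell}}R_{1,1;q}[B_\lambda,k]=\sum_{\lambda\in\mathcal{I}_{m-\ell,\ell}}f_k(B_\lambda|q),
$$
where the second equality is the definition of the $q$-file numbers $f_k(B_\lambda|q)$.

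The only point needing care is passing from the sum over $\lambda\in\mathcal{I}_{m-\ell,\ell}$ to the sum over Ferrers boards $B\subset[m-\ell]\times[\ell]$ used in \eqref{qWBinFile}. The map $\lambda\mapsto B_\lambda$ is a bijection between these two sets, exactly as in the proof of Corollary~\ref{WeylBinCor}. Partitions with zero parts give boards with empty columns, but an empty column holds no rooks and contributes no boxes, so each $q$-weight is unchanged.

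Hence ${\mathcal J}_{1,1,q}(m,\ell,k)=\left\langle m\atop \ell\right\rangle_{k|q}$. Substituting this into the expansion of Step 2 gives the assertion.
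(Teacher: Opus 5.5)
Your proposal is correct and follows essentially the same route as the paper: you specialize the general binomial formula to $s=1$, $p=1$ (giving \eqref{qShiftBin}), identify $\mathcal{J}_{1,1,q}(m,\ell,k)$ with $\sum_{\lambda\in\mathcal{I}_{m-\ell,\ell}} R_{1,1;q}[B_\lambda,k]$ via Corollary~\ref{qBinomialRook}, and then pass to the sum over boards $B\subset[m-\ell]\times[\ell]$ through $\lambda\mapsto B_\lambda$. Your remark on this bijection spells out what the paper covers with ``Clearly''.
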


\subsubsection{A $(p,q)$ analog to the shift algebra} In the general case, \eqref{pqShiftBin} holds true and we have to consider the coefficients ${\mathcal J}_{1,p,q}(m,\ell,k)$ from \eqref{pqShiftBinC}, given explicitly by
\begin{equation}
{\mathcal J}_{1,p,q}(m,\ell,k)=\sum\limits_{\lambda \in \mathcal{I}_{m-\ell,\ell}} \sum\limits_{{\bf k}\in \{0,1\}^{\ell} \atop |{\bf k}|=k}  p^{\sum_{j=1}^{\ell-1}(1-k_{j+1})\sum_{l=1}^{j}k_l} q^{|\lambda|-\sum_{j=1}^{\ell}k_j\lambda_{\ell-j+1}} \prod_{j=1}^{\ell}\left[\lambda_{\ell - j+1} \right]_{p, q}^{k_j}.
\end{equation}
Instead of using this explicit form, we recall the identification given by \eqref{pqBinomCoeff}. Thus, if we define by 
$$
f_k(B_{\lambda}|p;q):= \mathscr{R}_{1,1; p,q}[B_{\lambda},k].
$$
a $(p,q)$-analog to the $k$-th file number of the board $B_{\lambda}$ associated to $\lambda \in \mathcal{I}_{m-\ell,\ell}$, then we can define a $(p,q)$-analog to \eqref{WBinFile} -- the {\em $(p,q)$-shift binomial coefficients} -- by
\begin{equation}\label{pqWBinFile}
\left \langle m \atop \ell \right\rangle_{k|p;q} :=\sum_{B \subset [m-\ell ] \times [\ell ]} f_k(B|p;q)=\sum_{B \subset [m-\ell ] \times [\ell ]} \mathscr{R}_{1,1; p,q}[B,k].
\end{equation}
Clearly, ${\mathcal J}_{1,p,q}(m,\ell,k)=\left \langle m \atop \ell \right\rangle_{k|p;q}$. 
\begin{corollary}\label{pqShiftBinCor} Let $s=1$. Then \eqref{pqdefgenWeyl} reduce to the commutation relations $XY=qYX+hYZ_p, \, Z_pY=pYZ_p, \, XZ_p=pZ_pX, \,\, Z_1 =I,$ of the $(p,q)$-analog to the shift algebra, and one has 
$$
(X+Y)^m=\sum_{\ell=0}^m \sum_{k=0}^{\ell} h^{k} \left \langle m \atop \ell \right\rangle_{k|p;q} \,Y^{m-\ell} Z_p^{k} X^{\ell-k},
$$
where $\left \langle m \atop \ell \right\rangle_{k|p;q}$ are the $(p,q)$-shift binomial coefficients defined in \eqref{pqWBinFile}.
\end{corollary}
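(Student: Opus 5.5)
The plan is to specialize the general binomial theorem to $s=1$ and then identify the resulting coefficients with the $(p,q)$-shift binomial coefficients through Corollary~\ref{pqBinomialRook}.

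First, I check that the relations reduce as claimed. Setting $s=1$ in \eqref{pqdefgenWeyl} gives $XY-qYX=hY^1Z_p$, that is $XY=qYX+hYZ_p$, together with the unchanged relations $Z_pY=pYZ_p$, $XZ_p=pZ_pX$ and $Z_1=I$.

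Next, I apply Theorem~\ref{thmbinomial} with $s=1$. The exponent of $Y$ becomes $m-\ell+(s-1)k=m-\ell$, which is exactly \eqref{pqShiftBin}:
$$(X+Y)^m=\sum_{\ell=0}^m\sum_{k=0}^{\ell}h^k{\mathcal J}_{1,p,q}(m,\ell,k)\,Y^{m-\ell}Z_p^kX^{\ell-k}.$$
It therefore remains to show that ${\mathcal J}_{1,p,q}(m,\ell,k)=\left\langle m \atop \ell\right\rangle_{k|p;q}$.

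For this I invoke Corollary~\ref{pqBinomialRook} with $s=1$, which gives
$${\mathcal J}_{1,p,q}(m,\ell,k)=\sum_{\lambda\in\mathcal I_{m-\ell,\ell}}\mathscr R_{1,1;p,q}[B_\lambda,k].$$
By the bijection between partitions $\lambda\in\mathcal I_{m-\ell,\ell}$ and Ferrers boards $B\subset[m-\ell]\times[\ell]$, namely $\lambda\mapsto B_\lambda$ as used in the Weyl case around \eqref{WeylBinomRel}, this sum equals $\sum_{B\subset[m-\ell]\times[\ell]}f_k(B|p;q)$. That is precisely the definition \eqref{pqWBinFile}.

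Two points need care. The corollary is stated for $p\neq1\neq q$, but it extends to all $p,q$. The reason is that Theorem~\ref{TheoWordNOS} holds for arbitrary $p,q$, and each word $Y^{m-\ell-\lambda_1}\omega_\lambda$ has board $B_\lambda$ with $m_1=0$, by the same argument as in the proof of Corollary~\ref{qBinomialRook}. Alternatively, both sides are polynomials in $p,q$, so the identity extends by continuity. The case $\ell=0$ is handled by the convention ${\mathcal J}=1$ and the empty board.

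The only subtle step is confirming that the board attached to each binary word is $B_\lambda$, so that Theorem~\ref{TheoWordNOS} applies word by word. I take this from the remark after Theorem~\ref{thmword} that prepending powers of $Y$ does not change the rook numbers.
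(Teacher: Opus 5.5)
Your proposal is correct and follows essentially the paper's own argument: you specialize Theorem~\ref{thmbinomial} to $s=1$ to obtain \eqref{pqShiftBin}, then identify ${\mathcal J}_{1,p,q}(m,\ell,k)$ with $\sum_{\lambda\in\mathcal{I}_{m-\ell,\ell}}\mathscr{R}_{1,1;p,q}[B_\lambda,k]$ via \eqref{pqBinomCoeff}, which is exactly the definition \eqref{pqWBinFile}. Your extra remark that Corollary~\ref{pqBinomialRook} extends beyond $p\neq 1\neq q$ is a welcome addition, since the paper passes over this point silently; it can be justified either directly from Theorem~\ref{TheoWordNOS} or by a polynomial-identity argument in $p,q$.
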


\subsection{The Jordan plane, the $q$-Jordan plane and the $(p,q)$-analog}\label{SectBinomJordan}
Let us consider $s=2$. From \eqref{BinomMain}, we obtain 
\begin{equation}\label{BinomJordan}
(X+Y)^m=\sum_{\ell=0}^m \sum_{k=0}^{\ell} h^{k} {\mathcal J}_{2,p,q}(m,\ell,k) \,Y^{m-\ell+k}Z_{p}^{k}X^{\ell-k},
\end{equation}
where (see \eqref{eqBinoMain})
$$
{\mathcal J}_{2,p,q}(m,\ell,k)=\sum\limits_{\lambda \in \mathcal{I}_{m-\ell,\ell}} \sum\limits_{{\bf k}\in \{0,1\}^{\ell} \atop |{\bf k}|=k}  p^{{\mathcal P}_{\ell}^{\ast}({\bf k})} q^{{\mathcal Q}_{\ell|2}^{\ast}({\bf n}_{\lambda},{\bf k})} {\mathcal A}_{\ell|2;p,q}({\bf n}_{\lambda},{\bf k}),
$$
with ${\bf n}_{\lambda}=(\lambda_1-\lambda_2,\lambda_2-\lambda_3,\ldots,\lambda_{\ell-1}-\lambda_{\ell},\lambda_{\ell} )$ for $\lambda \in \mathcal{I}_{m-\ell,\ell}$.

\subsubsection{The Jordan plane} Let us consider $p=q=1$. If we define, in analogy to above, the {\em Jordan binomial coefficients} by 
\begin{equation}\label{JordanBinFile}
\left\langle \left\langle {m \atop \ell } \right\rangle \right\rangle_k := \sum\limits_{\lambda \in \mathcal{I}_{m-\ell,\ell}}r_k^{(2)}(B_{\lambda}),
\end{equation}
then $\langle\langle {m \atop \ell } \rangle\rangle_k ={\mathcal J}_{2,1,1}(m,\ell,k)$ by Corollary~\ref{BinomialRook} for $s=2$. Thus, we have the following result.
\begin{corollary}\label{JordanBinCor} Let $s=2, p=1,q=1$. Then $Z_1=I$ and \eqref{pqdefgenWeyl} reduce to the commutation relation $XY=YX+hY^2$ of the Jordan plane, and one has 
$$
(X+Y)^m=\sum_{\ell=0}^m \sum_{k=0}^{\ell} h^{k} \left \langle  \left\langle m \atop \ell \right\rangle \right\rangle_k \,Y^{m-\ell+k} X^{\ell-k},
$$
where $\left \langle \left \langle m \atop \ell \right\rangle\right\rangle_k$ are the Jordan binomial coefficients defined in \eqref{JordanBinFile}.
\end{corollary}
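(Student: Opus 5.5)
This is a direct specialization of the general results. We set $s=2$ and $p=q=1$ in \eqref{pqdefgenWeyl}. Then $Z_p=Z_1=I$ by \eqref{pqcr4}, so the relations collapse to the single relation $XY-YX=hY^2$, which is the Jordan plane.

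Next we apply Theorem~\ref{thmbinomial} in the specialized form \eqref{BinomJordan}. With $Z_1=I$ the factor $Z_p^k$ disappears, and the monomials become $Y^{m-\ell+k}X^{\ell-k}$, since $(s-1)k=k$. The coefficient in front of $h^k Y^{m-\ell+k}X^{\ell-k}$ is ${\mathcal J}_{2,1,1}(m,\ell,k)$.

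Then we invoke Corollary~\ref{BinomialRook} with $s=2$, which gives
\[
{\mathcal J}_{2,1,1}(m,\ell,k)=\sum_{\lambda\in\mathcal{I}_{m-\ell,\ell}} r_k^{(2)}(B_\lambda).
\]
By definition \eqref{JordanBinFile}, this sum is exactly $\langle\langle {m \atop \ell}\rangle\rangle_k$. Substituting it into \eqref{BinomJordan} yields the claimed expansion.

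The case $\ell=0$ needs a separate check, because ${\mathcal J}_{2,1,1}(m,0,0)=1$ holds by convention rather than through \eqref{eqBinoMain}. For $\ell=0$, the set $\mathcal{I}_{m,0}$ contains only the empty partition, and $r_0^{(2)}$ of the empty board equals $1$, so the definition \eqref{JordanBinFile} agrees with the convention.

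There is no genuine obstacle here. The substantive combinatorial content, namely the interpretation of the products ${\mathcal A}_{\ell|2;1,1}$ as $2$-row-creation rook placements, was already established in \eqref{ConnRook}. The only points needing care are this boundary case and the correct exponent bookkeeping $Y^{m-\ell+(s-1)k}$ at $s=2$.
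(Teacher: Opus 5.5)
Your proposal is correct and is essentially the paper's own argument: you specialize \eqref{BinomJordan} to $p=q=1$ (so $Z_1=I$), identify ${\mathcal J}_{2,1,1}(m,\ell,k)$ with $\sum_{\lambda\in\mathcal{I}_{m-\ell,\ell}} r_k^{(2)}(B_\lambda)$ via Corollary~\ref{BinomialRook} at $s=2$, and recognize this sum as the definition \eqref{JordanBinFile}. Your explicit check of the $\ell=0$ boundary case, where $\mathcal{I}_{m,0}$ contains only the empty partition and $r_0^{(2)}=1$, is a detail the paper leaves implicit.
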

By manipulating the summation indices, we can write \eqref{BinomJordan} equivalently in the following form,
\begin{equation}\label{BinomShiftGen}
(X+Y)^m=\sum_{k=0}^{m} \left\{\sum_{j=0}^{k} h^{j} {\mathcal J}_{2,1,1}(m,m-k+j,j) \right\}\,Y^{k}X^{m-k}. 
\end{equation}
We can now compare this to the binomial formula derived by Benaoum \cite{HBB1998} and Viskov \cite{OVV1998} (see also \cite[Section 7.3]{TMMS2016} for more references in this context). Defining the {\em $h$-binomial coefficients} by
$$
\binom{n}{k}_h:=\binom{n}{k}\prod_{i=0}^{k-1}(1+hi),
$$
Benaoum \cite{HBB1998} wrote the binomial formula in this context in the beautiful form
\begin{equation}\label{BinomBen}
(X+Y)^m=\sum_{k=0}^{m} \binom{m}{k}_hY^kX^{m-k},
\end{equation}
see \eqref{ViskovJordan}. Comparing \eqref{BinomShiftGen} and \eqref{BinomBen}, this shows 
\begin{equation}\label{JordanComp}
\binom{m}{k}_h=\sum_{j=0}^{k} {\mathcal J}_{2,1,1}(m,m-k+j,j) h^{j}.
\end{equation}
From $x^{\overline{k}}=\sum_{j=0}^k |s(k,j)| x^j$ we obtain the expansion in $h$,
$$
\prod_{i=0}^{k-1}(1+hi)=\sum_{j=0}^{k} |s(k,k-j)| h^j .
$$
Inserting this into \eqref{JordanComp} and comparing coefficients of $h^j$ gives ${\mathcal J}_{2,1,1}(m,m-k+j,j)=\binom{m}{k} |s(k,k-j)|$. Recalling the cycle numbers $c(n,k) \equiv |s(n,k)|$, some index manipulations yield the following analog to Proposition~\ref{ShiftBinExP}.
\begin{proposition}\label{JordanBinExP} The Jordan binomial coefficients defined in \eqref{JordanBinFile} are given explicitly by
\begin{equation}\label{JordanBinEx}
\left\langle \left\langle {m \atop \ell } \right\rangle \right\rangle_k =\binom{m}{m-\ell+k} c(m-\ell+k,m-\ell). 
\end{equation}
\end{proposition}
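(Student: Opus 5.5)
The proof is a reindexing of the identity derived just before the statement, namely
$$
{\mathcal J}_{2,1,1}(m,m-k+j,j)=\binom{m}{k}\, c(k,k-j),
$$
valid for $0\leq j\leq k\leq m$. That identity came from comparing the two normal ordered expansions \eqref{BinomShiftGen} and \eqref{BinomBen} of $(X+Y)^m$ in the Jordan plane, together with the expansion $\prod_{i=0}^{k-1}(1+hi)=\sum_{j}c(k,k-j)h^j$ obtained from the rising factorial. Since $h$ may be treated as an indeterminate and the words $Y^aX^b$ are linearly independent in the Jordan plane (a PBW-type basis, as implicitly used throughout the normal ordering results), comparing coefficients of $h^jY^kX^{m-k}$ is legitimate. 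I would state this explicitly as the justification of the coefficient comparison.

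Given indices $m,\ell,k$ with $0\leq k\leq \ell\leq m$, I will set $k' := m-\ell+k$ and $j' := k$. Then $m-k'+j' = m-(m-\ell+k)+k=\ell$, so ${\mathcal J}_{2,1,1}(m,\ell,k)={\mathcal J}_{2,1,1}(m,m-k'+j',j')$. The constraints also hold: $0\leq j'\leq k'$ because $\ell\leq m$, and $k'\leq m$ because $k\leq \ell$. Applying the displayed identity gives
$$
\left\langle \left\langle {m \atop \ell } \right\rangle \right\rangle_k={\mathcal J}_{2,1,1}(m,\ell,k)=\binom{m}{m-\ell+k}\,c(m-\ell+k,\,m-\ell),
$$
since $k'-j'=m-\ell$. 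The first equality is Corollary~\ref{BinomialRook} for $s=2$, i.e.\ the definition \eqref{JordanBinFile}.

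I also need to handle the boundary cases. For $\ell=0$ we have ${\mathcal J}_{2,1,1}(m,0,0)=1=\binom{m}{m}c(m,m)$, which is consistent. For $k>\ell$ both sides should vanish; this matters only if one wants the formula beyond the stated range $0\le k\le\ell$.

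The main obstacle is bookkeeping rather than mathematics. The derivation of \eqref{BinomShiftGen} from \eqref{BinomJordan} substitutes $k\mapsto j$ and $m-\ell+k\mapsto k$, so I must check that every pair $(\ell,k)$ is hit exactly once as $(m-k'+j',j')$ with $0\le j'\le k'\le m$. The substitution above is exactly the inverse map, so this is the step to verify carefully.
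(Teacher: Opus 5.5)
Your proposal is correct and follows the paper's route: compare \eqref{BinomShiftGen} with Benaoum's formula \eqref{BinomBen}, and expand $\prod_{i=0}^{k-1}(1+hi)$ via cycle numbers to get ${\mathcal J}_{2,1,1}(m,m-k+j,j)=\binom{m}{k}c(k,k-j)$. Your reindexing $k'=m-\ell+k$, $j'=k$ then makes explicit the ``index manipulations'' the paper leaves to the reader. Your extra checks are details the paper omits but are sound: the linear independence of the $Y^aX^b$ with $h$ treated as an indeterminate, which justifies comparing coefficients, and the bijection of index ranges on $0\le k\le \ell\le m$.
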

Combining \eqref{JordanBinFile} and \eqref{JordanBinEx}, we obtain the following analog to \eqref{IDWeylVarvak} and \eqref{IDShiftVarvak},
\begin{equation}\label{IDJordanVarvak}
\sum_{B \subset [m-\ell ] \times [\ell ]} r_k^{(2)}(B)=\binom{m}{m-\ell+k} c(m-\ell+k,m-\ell).
\end{equation}
Note that \eqref{JordanComp} gives an explicit connection between Benaoum's $h$-binomial coefficients and the Jordan binomial coefficients defined here,
$$
\binom{m}{k}_h=\sum_{j=0}^{k}\left\langle \left\langle {m \atop m-k+j } \right\rangle \right\rangle_j h^{j}.
$$

\subsubsection{The $q$-Jordan plane} Let us consider $p=1$. If we define, in analogy to above, the {\em $q$-Jordan binomial coefficients} by 
\begin{equation}\label{qJordanBinFile}
\left\langle \left\langle {m \atop \ell } \right\rangle \right\rangle_{k|q} := \sum\limits_{\lambda \in \mathcal{I}_{m-\ell,\ell}}R_{2, 1; q}[B_{\lambda}, k],
\end{equation}
where the $q$-deformed $2$-rook numbers $R_{2, 1; q}[B_{\lambda}, k]$ are defined in \eqref{RookCeleste}, then $\langle\langle {m \atop \ell } \rangle\rangle_{k|q} ={\mathcal J}_{2,1,q}(m,\ell,k)$ by Corollary~\ref{qBinomialRook} for $s=2$. Thus, we have the following result.
\begin{corollary}\label{qJordanBinCor} Let $s=2, p=1$. Then $Z_1=I$ and \eqref{pqdefgenWeyl} reduce to the commutation relation $XY=qYX+hY^2$ of the $q$-Jordan plane, and one has 
$$
(X+Y)^m=\sum_{\ell=0}^m \sum_{k=0}^{\ell} h^{k} \left \langle  \left\langle m \atop \ell \right\rangle \right\rangle_{k|q} \,Y^{m-\ell+k} X^{\ell-k},
$$
where $\left \langle \left \langle m \atop \ell \right\rangle\right\rangle_{k|q}$ are the $q$-Jordan binomial coefficients defined in \eqref{qJordanBinFile}.
\end{corollary}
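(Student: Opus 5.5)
The statement is a direct specialization, so the plan is to read it off from Corollary~\ref{qBinomialRook} with $s=2$, in the same way Corollary~\ref{JordanBinCor} was read off from Corollary~\ref{BinomialRook}.

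First, I substitute $s=2$ and $p=1$ into \eqref{pqdefgenWeyl}. The relation $XZ_1=Z_1X$ is trivial. Since $Z_1=I$, the relation $XY-qYX=hY^sZ_p$ becomes $XY=qYX+hY^2$, which is the commutation relation of the $q$-Jordan plane.

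Next, I apply \eqref{qBinomExpr} of Corollary~\ref{qBinomialRook} with $s=2$. The exponent of $Y$ is then $m-\ell+(s-1)k=m-\ell+k$, which gives
$$
(X+Y)^m=\sum_{\ell=0}^m \sum_{k=0}^{\ell} h^{k} \Bigl(\sum_{\lambda\in\mathcal{I}_{m-\ell,\ell}} R_{2,1;q}[B_\lambda,k]\Bigr) Y^{m-\ell+k}X^{\ell-k}.
$$
By definition \eqref{qJordanBinFile}, the inner sum is exactly $\left\langle \left\langle {m \atop \ell } \right\rangle \right\rangle_{k|q}$. Equivalently, \eqref{qBinomCoeff} identifies it with ${\mathcal J}_{2,1,q}(m,\ell,k)$, and inserting this into \eqref{BinomJordan} with $p=1$ gives the same result.

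The only point needing care is a consistency check on the two ways of reading the coefficient. When Corollary~\ref{qBinomialRook} is invoked through Theorem~\ref{TheoWordNOS} at $p=1$, the word $Y^{m-\ell-\lambda_1}\omega_\lambda$ must be associated with the board $B_\lambda$. This was already justified in the proof of Corollary~\ref{qBinomialRook}, using the invariance $\mathscr{R}[B_{\omega(n,m)},k]=\mathscr{R}[B_\omega,k]$. No further obstacle arises, since everything else is a relabeling.
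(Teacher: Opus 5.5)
Your proposal is correct and follows the paper's argument. The paper likewise specializes Corollary~\ref{qBinomialRook} to $s=2$, identifies $\mathcal{J}_{2,1,q}(m,\ell,k)$ with the sum in \eqref{qJordanBinFile}, and inserts this into \eqref{BinomJordan} at $p=1$; your remark about associating $Y^{m-\ell-\lambda_1}\omega_\lambda$ with $B_\lambda$ is already handled in the proof of Corollary~\ref{qBinomialRook}.
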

As in the undeformed case, we can write \eqref{BinomJordan} equivalently in the following form,
\begin{equation}\label{qBinomShiftGen}
(X+Y)^m=\sum_{k=0}^{m} \left\{\sum_{j=0}^{k} h^{j} {\mathcal J}_{2,1,q}(m,m-k+j,j) \right\}\,Y^{k}X^{m-k}.
\end{equation}
Benaoum \cite{HBB1999} wrote the binomial formula in this context in the form
\begin{equation}\label{qBinomBen}
(X+Y)^m=\sum_{k=0}^{m} \binom{m}{k}_{h|q}Y^kX^{m-k},
\end{equation}
where the {\em $(q,h)$-deformed binomial coefficient} is defined by
\begin{equation}\label{qhBinCoeff}
\binom{n}{k}_{h|q}:=\binom{n}{k}_q \prod_{i=0}^{k-1}(1+h[i]_q).
\end{equation}
Comparing \eqref{qBinomShiftGen} and \eqref{qBinomBen}, this shows the $q$-analog to \eqref{JordanComp},
\begin{equation}\label{qJordanComp}
\binom{m}{k}_{h|q}=\sum_{j=0}^{k} {\mathcal J}_{2,1,q}(m,m-k+j,j) h^{j}. 
\end{equation}
Proceeding as in the undeformed case, we have for the unsigned $q$-deformed Stirling numbers of the first kind (or equivalently, the {\em $q$-deformed cycle numbers}), $c_q(n,k) $, that
$$
x(x+[1]_q)(x+[2]_q)\cdots (x+[k-1]_q)=\sum_{j=0}^k c_q(k,j)x^j,
$$
implying an expansion
\begin{equation}\label{qcycleexp}
\prod_{i=0}^{k-1}(1+h[i]_q)=\sum_{j=0}^{k} c_q(k,k-j) h^j. 
\end{equation}
Recall that in the undeformed case the cycle numbers were used in Proposition~\ref{JordanBinExP}. They can be written in terms of the elementary symmetric functions as $c(n,k)=e_{n-k}(1,2,\ldots,n-1)$, and one has the $q$-analog 
\begin{equation}\label{qcyclesym}
c_q(n,k)=e_{n-k}([1]_q,[2]_q,\ldots,[n-1]_q).
\end{equation} 
Thus, we have the following $q$-analog to Proposition~\ref{JordanBinExP}.
\begin{proposition} The $q$-Jordan binomial coefficients defined in \eqref{qJordanBinFile} are given explicitly by
\begin{equation}\label{qJordanBinEx}
\left\langle \left\langle {m \atop \ell } \right\rangle \right\rangle_{k|q} =\binom{m}{m-\ell+k}_q c_q(m-\ell+k,m-\ell). 
\end{equation}
\end{proposition}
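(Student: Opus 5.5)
The plan mirrors the undeformed argument leading to Proposition~\ref{JordanBinExP}: compare the two expansions of $(X+Y)^m$ in the $q$-Jordan plane and read off coefficients. First, by Corollary~\ref{qJordanBinCor} together with \eqref{qBinomShiftGen}, the coefficient of $Y^kX^{m-k}$ in $(X+Y)^m$ equals $\sum_{j=0}^{k} h^j \mathcal{J}_{2,1,q}(m,m-k+j,j)$, where $\mathcal{J}_{2,1,q}(m,\ell,j)=\langle\langle {m \atop \ell}\rangle\rangle_{j|q}$. By Benaoum's formula \eqref{qBinomBen} the same coefficient is $\binom{m}{k}_{h|q}$, and this gives \eqref{qJordanComp}.

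Next, insert the definition \eqref{qhBinCoeff} and the expansion \eqref{qcycleexp} to write
$$\binom{m}{k}_{h|q}=\binom{m}{k}_q\sum_{j=0}^{k} c_q(k,k-j)h^j .$$
Both sides of \eqref{qJordanComp} are polynomials in $h$ whose coefficients do not depend on $h$. Indeed, the $\mathcal{J}$'s are $h$-free, since $h$ was factored out as $h^k$ in \eqref{BinomMain}. We may therefore compare coefficients of $h^j$ and obtain
$$\mathcal{J}_{2,1,q}(m,m-k+j,j)=\binom{m}{k}_q c_q(k,k-j).$$
This step is legitimate because $h$ is a free parameter. Concretely, the identity holds in $A_{2;h|1,q}$ for every $h\neq 0$, and the normal ordered monomials $Y^aX^b$ are linearly independent; this is the PBW-type fact implicit in all the normal-ordering results used before. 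Since an identity between polynomials valid at infinitely many values of $h$ holds coefficientwise, the comparison follows.

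Finally, relabel indices. Set $\ell=m-k+j$ and rename $j$ as $k$, so the old $k$ becomes $m-\ell+k$ and the old $k-j$ becomes $m-\ell$. This turns the identity into
$$\langle\langle {m \atop \ell}\rangle\rangle_{k|q}=\binom{m}{m-\ell+k}_q\, c_q(m-\ell+k,m-\ell),$$
which is \eqref{qJordanBinEx}. It remains to check the ranges. For $0\le k\le \ell\le m$ we need $m-\ell+k\le m$, which holds since $k\le\ell$. If $m-\ell+k>m$ is excluded automatically, and the boundary cases ($k=0$, or $\ell=0$ where $\mathcal{J}=1$) agree with $c_q(n,n)=1$ and $\binom{m}{m-\ell}_q$ from Corollary~\ref{Potter}.

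The only delicate point is the justification of coefficient comparison in $h$ together with the validity of Benaoum's formula \eqref{qBinomBen} for the relation $XY=qYX+hY^2$ with the paper's ordering conventions. If there is any doubt about the latter, one can verify \eqref{qBinomBen} directly by induction on $m$. Multiply on the left by $X+Y$ and move $X$ past $Y^k$ using
$$XY^k=q^kY^kX+h[k]_qY^{k+1},$$
which follows by induction from the defining relation. This yields the recursion $\binom{m+1}{k}_{h|q}=q^{k}\binom{m}{k}_{h|q}+\binom{m}{k-1}_{h|q}\,(1+h[k-1]_q)$, and one checks it against the $q$-Pascal rule for $\binom{m}{k}_q$ multiplied by $\prod_{i<k}(1+h[i]_q)$.
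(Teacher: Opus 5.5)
Your proposal is correct and follows essentially the same route as the paper. You equate \eqref{qBinomShiftGen} with Benaoum's formula \eqref{qBinomBen} to obtain \eqref{qJordanComp}, expand the product via \eqref{qcycleexp}, compare coefficients of $h^j$, and relabel indices; your added justifications (linear independence of the monomials $Y^aX^b$, the polynomial-in-$h$ argument, and the inductive check of Benaoum's formula via $XY^k=q^kY^kX+h[k]_qY^{k+1}$ with the $q$-Pascal rule) are sound and make explicit what the paper leaves implicit.
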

Combining \eqref{qJordanBinFile} and \eqref{qJordanBinEx}, we obtain the following $q$-analog to \eqref{IDJordanVarvak},
\begin{equation}\label{qIDJordanVarvak}
\sum_{B \subset [m-\ell ] \times [\ell ]} R_{2, 1; q}[B, k]=\binom{m}{m-\ell+k}_q c_q(m-\ell+k,m-\ell).
\end{equation}
\begin{remark}
Further properties of the $(q,h)$-binomial coefficients can be found in \cite{ZZJW2000}. Let us point out that they reduce for $h=q-1$ due to $\prod_{i=0}^{k-1}(1+(q-1)[i]_q)=q^{\binom{k}{2}}$ nicely to $\binom{n}{k}_{q-1|q}=\binom{n}{k}_q q^{\binom{k}{2}}$, as was observed by Rosengren \cite{HR2000}, see also \cite{AHMM2002}. In that case, \eqref{qBinomBen} reduces to 
$$
(X+Y)^m=\sum_{k=0}^{m} \binom{m}{k}_{q}q^{\binom{k}{2}}\, Y^kX^{m-k}.
$$
\end{remark}

\subsubsection{A $(p,q)$-analog to the Jordan plane}\label{SectpqJordan2} Let us consider $p\neq 1\neq q$ and define in analogy to above the {\em $(p, q)$-Jordan binomial coefficients} by 
\begin{equation}\label{pqJordanBinFile}
\left\langle \left\langle {m \atop \ell } \right\rangle \right\rangle_{k|p; q} := \sum\limits_{\lambda \in \mathcal{I}_{m-\ell,\ell}}\mathscr{R}_{2, 1;p, q}[B_{\lambda}, k], \text{ and } \left\langle \left\langle {m \atop \ell } \right\rangle \right\rangle_{k|p; q}=0 \text{ for } m-\ell-k<0,
\end{equation}
where the $(p, q)$-deformed 2-rook numbers $\mathscr{R}_{2, 1;p, q}[B_{\lambda}, k]$ are given by \eqref{PQRookWeS}. Then, according to Corollary~\ref{pqBinomialRook}, for $s=2$, one has
\begin{equation}\label{pqJordanDefin}
\left\langle \left\langle {m \atop \ell } \right\rangle \right\rangle_{k|p; q}={\mathcal J}_{2,p,q}(m,\ell,k).
\end{equation}
\begin{corollary}\label{pqJordanBinCor} Let $s=2, p\neq 1\neq q$. Then \eqref{pqdefgenWeyl} reduce to the commutation relations $XY=qYX+hY^2Z_p, \,  XZ_p=pZ_pX, \, Z_pY=pYZ_p$ of the $(p, q)$-Jordan plane, and one has 
$$
(X+Y)^m=\sum_{\ell=0}^m \sum_{k=0}^{\ell} h^{k} \left \langle  \left\langle m \atop \ell \right\rangle \right\rangle_{k|p; q} \,Y^{m-\ell+k} Z_p^kX^{\ell-k},
$$
where $\left \langle \left \langle m \atop \ell \right\rangle\right\rangle_{k|p; q}$ are the $(p, q)$-Jordan binomial coefficients defined in \eqref{pqJordanBinFile}.
\end{corollary}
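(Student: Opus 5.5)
The plan is to specialize the general binomial formula to $s=2$ and then rename the coefficients. First I would set $s=2$ in the relations \eqref{pqdefgenWeyl}. Since $Y^s=Y^2$, the first relation becomes $XY-qYX=hY^2Z_p$, that is, $XY=qYX+hY^2Z_p$. The other two relations, $Z_pY=pYZ_p$ and $XZ_p=pZ_pX$, are unchanged. This gives the stated defining relations of the $(p,q)$-Jordan plane.

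Next I would apply Theorem~\ref{thmbinomial} with $s=2$, which yields \eqref{BinomJordan}:
$$(X+Y)^m=\sum_{\ell=0}^m\sum_{k=0}^{\ell}h^k\,{\mathcal J}_{2,p,q}(m,\ell,k)\,Y^{m-\ell+k}Z_p^kX^{\ell-k}.$$
The monomials already have the required form, because the exponent of $Y$ is $m-\ell+(s-1)k=m-\ell+k$.

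It then remains to identify the coefficients. By Corollary~\ref{pqBinomialRook} with $s=2$, we have ${\mathcal J}_{2,p,q}(m,\ell,k)=\sum_{\lambda\in\mathcal{I}_{m-\ell,\ell}}\mathscr{R}_{2,1;p,q}[B_\lambda,k]$. This is exactly the first clause of \eqref{pqJordanBinFile}, which gives \eqref{pqJordanDefin}. Substituting this into \eqref{BinomJordan} produces the claimed expansion.

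The one point I expect to need care is the additional convention in \eqref{pqJordanBinFile} that the coefficient vanishes when $m-\ell-k<0$. I would reconcile it with the sum in one of two ways, taking the first that works.

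The first option is to show that the corresponding ${\mathcal J}_{2,p,q}(m,\ell,k)$ already vanishes. Each $B_\lambda$ has at most $m-\ell$ rows, and one might hope that this bounds the number of rooks. For $s=2$, however, the row creation rule adds rows, so placements with $k>m-\ell$ need not be excluded by row counting. This option may therefore fail.

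The second option does not need vanishing. The convention is only a definitional extension. Wherever it conflicts with the rook-sum definition, the term would carry the monomial $Y^{m-\ell+k}$ with a nonnegative exponent anyway. So I would check directly that the convention is consistent with \eqref{pqJordanDefin}: this amounts to checking that ${\mathcal J}_{2,p,q}(m,\ell,k)=0$ whenever $m-\ell-k<0$, using the explicit form \eqref{eqBinoMain}. In that form, the factor ${\mathcal A}_{\ell|2;p,q}$ contains $[\lambda_{\ell-j+1}+\sum_{l<j}k_l]_{p,q}$, and this has a zero factor only if a rook is placed in an empty column. If that check fails, I would interpret the corollary as using the rook-sum definition, under which the statement follows immediately from Theorem~\ref{thmbinomial} and Corollary~\ref{pqBinomialRook}.
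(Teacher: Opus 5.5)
Your main argument is the paper's: it obtains \eqref{BinomJordan} from Theorem~\ref{thmbinomial} at $s=2$, cites Corollary~\ref{pqBinomialRook} for \eqref{pqJordanDefin}, and does nothing else. The point you leave undecided can be settled. It goes against the vanishing clause in \eqref{pqJordanBinFile}: your first option fails, and your fallback is the only correct reading.

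Your own observation already settles it. For $s=2$, a factor $[\lambda_{\ell-j+1}+\sum_{l<j}k_l]_{p,q}$ vanishes only when a rook is placed in an empty column. Moreover, every summand of \eqref{eqBinoMain} is a polynomial in $p,q$ with nonnegative coefficients, so no cancellation can occur. Hence the single partition $\lambda=(1,\ldots,1)\in\mathcal{I}_{m-\ell,\ell}$ makes ${\mathcal J}_{2,p,q}(m,\ell,k)$ a nonzero polynomial throughout the range $1\le m-\ell<k\le\ell$, which is exactly where the clause prescribes $0$.

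The smallest instance is $(m,\ell,k)=(3,2,2)$. Only $\lambda=11$ (the word $X^2Y$) contributes, with value $[1]_{p,q}[2]_{p,q}=p+q$. At $p=q=1$ this becomes the coefficient $\binom{3}{3}c(3,1)=2$ of $h^2Y^3$ in $(X+Y)^3$ from Proposition~\ref{JordanBinExP}, in agreement with Benaoum's $\binom{3}{3}_h=(1+h)(1+2h)$.

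So the two clauses of \eqref{pqJordanBinFile} contradict each other. The second clause seems to have been carried over from $s=0$, where the exponent $m-\ell-k$ of $Y$ forces it; here, as you note, $Y^{m-\ell+k}$ never needs it. The corollary must therefore be read with the rook-sum definition alone, and that is also all that the paper's appeal to Corollary~\ref{pqBinomialRook} proves.

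Finally, drop your ``second option'' as written. It announces that vanishing is not needed and then reduces to proving vanishing.
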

As in the above cases, we can write \eqref{BinomJordan} equivalently in the following from
$$
(X+Y)^m=\sum_{k=0}^{m} \left\{\sum_{j=0}^{k} h^{j} {\mathcal J}_{2,p,q}(m,m-k+j,j) Y^{k}Z_{p}^{j}\right\}\,X^{m-k}.
$$
Thus, using $Y^kZ^j=p^{-kj}Z^jY^k$, we have found that
\begin{equation}\label{pqBinomJordanGen}
(X+Y)^m=\sum_{k=0}^{m} \left\{\sum_{j=0}^{k} h^{j} p^{-kj} {\mathcal J}_{2,p,q}(m,m-k+j,j) Z_{p}^{j}\right\}\,Y^{k}X^{m-k}.
\end{equation}
In view of \eqref{qJordanComp}, it is natural to introduce the $(p,q,h)$-{\it deformed binomial coefficients} by
\begin{equation}\label{pqJordanComp}
\binom{m}{k}_{h|p;q}(Z_p) :=\sum_{j=0}^{k} {\mathcal J}_{2,p,q}(m,m-k+j,j) p^{-kj}  h^{j} Z_{p}^{j},
\end{equation}
with $\binom{m}{k}_{h|1;q}(Z_1) =\binom{m}{k}_{h|q}$. Thus, we can write \eqref{pqBinomJordanGen} in a form similar to Benaoum's formula \eqref{qBinomBen},
\begin{equation}\label{pqBinomBen}
(X+Y)^m=\sum_{k=0}^{m} \binom{m}{k}_{h|p;q}(Z_p)\,\,  Y^{k} X^{m-k}.
\end{equation}
Note that we denoted the dependence on $Z_p$ explicitly. Let us write
\begin{equation}\label{pqBinomEx}
\binom{m}{k}_{h|p;q}(Z_p) =\binom{m}{k}_{p,q}\sum_{j=0}^{k} \binom{m}{k}_{p,q}^{-1}{\mathcal J}_{2,p,q}(m,m-k+j,j) p^{-kj}  h^{j} Z_{p}^{j}.
\end{equation}
In analogy to the $q$-deformed situation \eqref{qcyclesym} we define the $(p,q)$-deformed cycle numbers by
\begin{equation}\label{pqcyclestan}
c_{p,q}(n,k):=e_{n-k}([1]_{p,q},[2]_{p,q},\ldots,[n-1]_{p,q},
\end{equation}
see \cite{dML1993}. Then we obtain the following analog to \eqref{qcycleexp},
\begin{equation}\label{pqcycleexp}
\prod_{i=0}^{k-1}(1+h[i]_{p,q})=\sum_{j=0}^{k} c_{p,q}(k,k-j) h^j. 
\end{equation}
Replacing $h$ in \eqref{pqcycleexp} by $hp^{-k}Z_p$, we obtain
$$
\prod_{i=0}^{k-1}(1+hp^{-k}[i]_{p,q}Z_p)=\sum_{j=0}^{k} c_{p,q}(k,k-j) p^{-kj}h^j Z_p^j.
$$
\begin{conjecture} The $(p,q,h)$-deformed binomial coefficients can be written as 
$$
\binom{m}{k}_{h|p;q}(Z_p) =\binom{m}{k}_{p,q} \prod_{i=0}^{k-1}(1+hp^{-k}[i]_{p,q}Z_p).
$$
\end{conjecture}
\begin{remark} 
Comparing this with \eqref{pqBinomEx} would imply
${\mathcal J}_{2,p,q}(m,m-k+j,j)=\binom{m}{k}_{p,q}c_{p,q}(k,k-j)$. Using \eqref{pqJordanDefin} and some index manipulations as above, this identity is equivalent to
$$
\left\langle \left\langle {m \atop \ell } \right\rangle \right\rangle_{k|p; q}=\binom{m}{m-\ell +k}_{p,q}c_{p,q}(m-\ell +k,m-\ell),
$$
being the natural $(p,q)$-analog to \eqref{qJordanBinEx}, implying also the natural $(p,q)$-analog to \eqref{qIDJordanVarvak}.
\end{remark}

\subsection{A few remarks on the case $s=3$}\label{Sect3S} Let us consider $s=3$ and $p=q=1$, i.e., we consider the commutation relation $XY-YX=hY^3$. It seems that one does not have a nice interpretation of the variables as in the cases $s=0,1,2$ (but note that in an operational realization one has $DX^{-\frac{1}{2}}-X^{-\frac{1}{2}}D=-\frac{1}{2}(X^{-\frac{1}{2}})^3$, see Remark~\ref{RemFrac} and \cite{MS2024}). However, if we define in analogy to above, 
\begin{equation}\label{3BinFile}
\left\{ \left\{ {m \atop \ell } \right\} \right\}_k := \sum\limits_{\lambda \in \mathcal{I}_{m-\ell,\ell}}r_k^{(3)}(B_{\lambda}),
\end{equation}
then $\left\{ \left\{ {m \atop \ell } \right\} \right\}_k ={\mathcal J}_{3,1,1}(m,\ell,k)$ by Corollary~\ref{BinomialRook} for $s=3$. Thus, we have the following result.
\begin{corollary}\label{3BinCor} Let $s=3, p=1,q=1$. Then $Z_1=I$ and \eqref{pqdefgenWeyl} reduce to the commutation relation $XY=YX+hY^3$ and one has 
$$
(X+Y)^m=\sum_{\ell=0}^m \sum_{k=0}^{\ell} h^{k} \left\{ \left\{ {m \atop \ell } \right\} \right\}_k \,Y^{m-\ell+2k} X^{\ell-k}.
$$
\end{corollary}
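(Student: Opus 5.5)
Corollary~\ref{3BinCor} is the specialization $s=3$, $p=q=1$ of Theorem~\ref{thmbinomial}, combined with the rook-theoretic identification of Corollary~\ref{BinomialRook}. The proof is therefore a short chain of substitutions, in the same pattern as Corollaries~\ref{WeylBinCor}, \ref{ShiftBinCor} and \ref{JordanBinCor}.

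First we check the algebra. For $p=1$ the relation $Z_1=I$ from \eqref{pqdefgenWeyl} makes $Z_p$ the identity. The relations $Z_pY=pYZ_p$ and $XZ_p=pZ_pX$ then become trivial. With $s=3$ and $q=1$, the first relation reads $XY-YX=hY^3$, which is the commutation relation stated in the corollary.

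Next we apply \eqref{BinomMain} with $s=3$. The exponent of $Y$ is $m-\ell+(s-1)k=m-\ell+2k$, and since $Z_1^k=I$ the middle factor disappears. This gives
$$(X+Y)^m=\sum_{\ell=0}^m\sum_{k=0}^{\ell}h^k\,{\mathcal J}_{3,1,1}(m,\ell,k)\,Y^{m-\ell+2k}X^{\ell-k}.$$
We then invoke Corollary~\ref{BinomialRook} with $s=3$. It gives ${\mathcal J}_{3,1,1}(m,\ell,k)=\sum_{\lambda\in\mathcal{I}_{m-\ell,\ell}}r_k^{(3)}(B_\lambda)$, which is $\left\{ \left\{ {m \atop \ell } \right\} \right\}_k$ by definition \eqref{3BinFile}. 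Substituting this into the display yields the claim.

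The one point needing care is the boundary case $\ell=0$, where Theorem~\ref{thmbinomial} sets ${\mathcal J}_{s,p,q}(m,0,k)=1$. The only admissible value there is $k=0$. In that case $\mathcal{I}_{m,0}$ contains only the empty partition, and $r_0^{(3)}(B_\emptyset)=1$, so the definition \eqref{3BinFile} agrees with the convention. Nothing else requires work: the proposition preceding Corollary~\ref{BinomialRook} already establishes the identification with $s$-rook numbers for every $s\geq 2$, and in particular for $s=3$.
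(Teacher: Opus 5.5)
Your proposal is correct and is essentially the paper's own argument: specialize \eqref{BinomMain} to $s=3$, $p=q=1$, so that $Z_1=I$ and the $Y$-exponent becomes $m-\ell+2k$, and then identify ${\mathcal J}_{3,1,1}(m,\ell,k)$ with $\sum_{\lambda\in\mathcal{I}_{m-\ell,\ell}} r_k^{(3)}(B_\lambda)$ via Corollary~\ref{BinomialRook}. Your check of the $\ell=0$ convention is a small point the paper leaves implicit.
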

Switching the order of summation and relabelling the indices, we can write equivalently
\begin{equation}\label{3Expansion}
(X+Y)^m=\sum_{k=0}^{m} \sum_{\ell=0}^{m-k}  h^{m-k-\ell} \left\{ \left\{ {m \atop m- \ell } \right\} \right\}_{m-\ell-k} \,Y^{2m-2k-\ell} X^{k}.
\end{equation}
Similar to the preceding sections, we compare this binomial formula (for $h=-1$) to the explicit formula given in Proposition~\ref{Bessel}. Recall \cite{SYZQ2011} that one introduces the signless Bessel numbers of the first kind $a(n,k)$ as coefficient of $x^{n-k}$ in $y_{n-1}$, and the Bessel numbers of the first kind $b(n,k)$ by $b(n,k)=(-1)^{n-k}a(n,k)$. Thus, writing 
\begin{equation}\label{BessPol}
y_{n}(x)=\sum_{k=0}^{n} \frac{(2n-k)!}{2^{n-k} (n-k)! k!}x^{n-k},
\end{equation}
we find the well-known result \cite{SYZQ2011} 
\begin{equation}\label{BesselExp}
a(n,k)=\frac{(2n-k-1)!}{2^{n-k} (n-k)! (k-1)!}.
\end{equation}
Using in \eqref{bf2823b} that
$$
y_{n-m-1}(-Y)=\sum_{k=1}^{n-m}a(n-m,k)(-Y)^{n-m-k},
$$
we obtain, by adapting also the indices for better comparison with \eqref{3Expansion}, from \eqref{bf2823b} finally
\begin{equation}\label{3ExpansionVis}
(X+Y)^m= \sum_{k=0}^m \sum_{\ell=0}^{m-k}\binom{m}{k} a(m-k,\ell)(-1)^{m-k-\ell} Y^{2m-2k-\ell} X^k. 
\end{equation}
Thus, comparing \eqref{3Expansion} (for $h=-1$) with \eqref{3ExpansionVis}, one finds
$$
\left\{ \left\{ {m \atop m- \ell } \right\} \right\}_{m-\ell-k}=\binom{m}{k} a(m-k,\ell).
$$
Some index manipulations yield the following analog to Proposition~\ref{ShiftBinExP}.
\begin{proposition}\label{3BinExP} The coefficients defined in \eqref{3BinFile} are given explicitly by
\begin{equation}\label{3BinEx}
\left\{ \left\{ {m \atop \ell } \right\} \right\}_k = \binom{m}{m-\ell+k} a(m-\ell+k,m-\ell), 
\end{equation}
where $a(n,k)$ denotes the signless Bessel numbers of the first kind.
\end{proposition}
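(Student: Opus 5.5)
The plan mirrors the proofs of Propositions~\ref{ShiftBinExP} and \ref{JordanBinExP}. Everything needed is already in place: the identity obtained just before the statement,
\[
\left\{ \left\{ {m \atop m- \ell } \right\} \right\}_{m-\ell-k}=\binom{m}{k} a(m-k,\ell),
\]
which came from comparing \eqref{3Expansion} at $h=-1$ with \eqref{3ExpansionVis}. What remains is to reindex this into the form \eqref{3BinEx}.

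First I would justify the comparison a little more carefully. Both expansions express $(X+Y)^m$ as a linear combination of the monomials $Y^{a}X^{b}$. These monomials are linearly independent in the algebra, since the normal ordered monomials form a PBW-type basis of $A_{3;h|1,1}$. For fixed exponent $k$ of $X$, the exponent $2m-2k-\ell$ of $Y$ determines $\ell$ uniquely. So equal coefficients can be read off term by term.

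One point needs care because $h$ enters the two sides differently. \eqref{3Expansion} holds for general $h$, while \eqref{3ExpansionVis} came from Proposition~\ref{Bessel} with $h=-1$. Setting $h=-1$ in \eqref{3Expansion} produces the sign $(-1)^{m-k-\ell}$, which matches the sign in \eqref{3ExpansionVis}. This leaves exactly the displayed identity above.

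Now the reindexing. I would set $\ell' = m-\ell$ and $k' = m-\ell-k$, so the left-hand side becomes $\{\{{m\atop \ell'}\}\}_{k'}$. Inverting gives $\ell = m-\ell'$ and $k = m-\ell-k' = \ell'-k'$. Hence $m-k = m-\ell'+k'$, and $\binom{m}{k} = \binom{m}{\ell'-k'} = \binom{m}{m-\ell'+k'}$ by symmetry of the binomial coefficient. The right-hand side therefore becomes $\binom{m}{m-\ell'+k'}\, a(m-\ell'+k',\, m-\ell')$, which is \eqref{3BinEx}.

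The main obstacle is the range of validity. The comparison only covers $0\le k\le m$ and $0\le \ell\le m-k$. In the new variables this is $0\le k'\le \ell'\le m$, which is exactly the range on which $\{\{{m\atop \ell}\}\}_k$ is defined by \eqref{3BinFile}. I would still check the boundary conventions directly:

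\emph{Case $k'=0$.} Here $a(n,n)=1$ by \eqref{BesselExp}, and $r^{(3)}_0(B)=1$ for every board, so both sides equal $|\mathcal{I}_{m-\ell',\ell'}|=\binom{m}{\ell'}$.

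\emph{Case $m-\ell'=0$, i.e.\ $\ell'=m$.} Here $a(n,0)=0$ for $n\ge1$, and the only board in $\mathcal{I}_{0,m}$ is empty, so it admits no rooks. Both sides vanish for $k'\ge1$.

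As a last sanity check I would compare small cases such as $m=2$ against the direct computation in the example following Theorem~\ref{thmbinomial}.
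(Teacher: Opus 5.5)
Your proposal is correct and is essentially the paper's own argument: compare \eqref{3Expansion} at $h=-1$ with \eqref{3ExpansionVis} coefficientwise, then reindex via $\ell'=m-\ell$, $k'=m-\ell-k$; your appeal to the basis $\{Y^aX^b\}$ of the Ore extension only makes explicit what the paper leaves implicit. One small caveat in your boundary check: \eqref{BesselExp} does not literally give $a(0,0)=1$ because it involves $(-1)!$, so the case $\ell'=m$, $k'=0$ needs the convention $a(0,0)=1$ coming from $y_{-1}=1$ in the generating function, which is also what \eqref{3ExpansionVis} implicitly uses.
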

Combining \eqref{3BinFile} and \eqref{3BinEx}, we obtain the following analog to \eqref{IDWeylVarvak},
\begin{equation}\label{3Varvak}
\sum_{B \subset [m-\ell ] \times [\ell ]} r_k^{(3)}(B)= \binom{m}{m-\ell+k} a(m-\ell+k,m-\ell).
\end{equation}
Using \eqref{BesselExp}, one finds the explicit expression
\begin{equation}\label{ExpJordCoeff}
\left\{ \left\{ {m \atop \ell } \right\} \right\}_k = \frac{m! (m-\ell+2k-1)!}{2^k (\ell-k)!k!(m-\ell+k)!(m-\ell-1)!}.
\end{equation}

Now, we consider $s=3$, $p=1$ and $q\neq 1$, that is, the $q$-analog to the above. Thus, we consider the commutation relation 
\begin{equation}\label{s3qanalog}
XY-qYX=hY^3.
\end{equation} 
By defining in analogy to above
\begin{equation}\label{3qBinFile}
\left\{ \left\{ {m \atop \ell } \right\} \right\}_{k|q} := \sum\limits_{\lambda \in \mathcal{I}_{m-\ell,\ell}}R_{3,1;q}[B_{\lambda}, k],
\end{equation}
then $\left\{ \left\{ {m \atop \ell } \right\} \right\}_{k|q} ={\mathcal J}_{3,1,q}(m,\ell,k)$ by Corollary~\ref{qBinomialRook} for $s=3$. Thus, we have the following result.
\begin{corollary}\label{3qBinCor} Let $s=3, p=1,q\neq 1$. Then $Z_1=I$ and \eqref{pqdefgenWeyl} reduce to the commutation relation $XY=qYX+hY^3$ and one has 
\begin{equation}\label{3qBinCorFor}
(X+Y)^m=\sum_{\ell=0}^m \sum_{k=0}^{\ell} h^{k} \left\{ \left\{ {m \atop \ell } \right\} \right\}_{k|q} \,Y^{m-\ell+2k} X^{\ell-k}.
\end{equation}
\end{corollary}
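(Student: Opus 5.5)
This corollary is a direct specialization of results already established. I would follow the same pattern used for Corollaries~\ref{qShiftBinCor} and~\ref{qJordanBinCor}.

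\emph{Step 1: the relations.} Set $s=3$ and $p=1$ in \eqref{pqdefgenWeyl}. Since $Z_1=I$, the relations $Z_pY=pYZ_p$ and $XZ_p=pZ_pX$ become trivial. The first relation reads $XY-qYX=hY^3$, i.e.\ $XY=qYX+hY^3$, which is \eqref{s3qanalog}.

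\emph{Step 2: the expansion.} Apply Theorem~\ref{thmbinomial} with $s=3$ and $p=1$. Because $Z_1=I$, the monomials $Y^{m-\ell+(s-1)k}Z_p^kX^{\ell-k}$ become $Y^{m-\ell+2k}X^{\ell-k}$. This gives
\[
(X+Y)^m=\sum_{\ell=0}^m\sum_{k=0}^{\ell}h^k\,{\mathcal J}_{3,1,q}(m,\ell,k)\,Y^{m-\ell+2k}X^{\ell-k}.
\]

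\emph{Step 3: identify the coefficients.} Corollary~\ref{qBinomialRook} with $s=3$ gives ${\mathcal J}_{3,1,q}(m,\ell,k)=\sum_{\lambda\in\mathcal{I}_{m-\ell,\ell}}R_{3,1;q}[B_\lambda,k]$. By the definition \eqref{3qBinFile}, this sum is exactly $\left\{ \left\{ {m \atop \ell } \right\} \right\}_{k|q}$. Substituting it into the expansion of Step~2 yields \eqref{3qBinCorFor}.

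Alternatively, one can go straight from \eqref{qBinomExpr} with $s=3$, since it already has the form \eqref{3qBinCorFor} after applying the definition \eqref{3qBinFile}.

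\emph{Where care is needed.} The only real dependency is Corollary~\ref{qBinomialRook}, which rests on the normal ordering result of Celeste et al. So the one point to check is that $R_{3,1;q}$ as used in \eqref{3qBinFile} is the $s=3$ case of \eqref{RookCeleste}, i.e.\ that its weight is $q^{\#e\text{-boxes}}$ under the $3$-row creation rule. The boundary case $\ell=0$ is handled by ${\mathcal J}(m,0,0)=1$, and the board $B_\emptyset$ has $R_{3,1;q}[B_\emptyset,0]=1$, so the two sides agree there as well.
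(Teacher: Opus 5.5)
Your proposal is correct and follows the paper's argument: specialize \eqref{BinomMain} to $s=3$, $p=1$ (so $Z_1=I$ and the monomials become $Y^{m-\ell+2k}X^{\ell-k}$), then identify ${\mathcal J}_{3,1,q}(m,\ell,k)$ with the coefficients defined in \eqref{3qBinFile} via Corollary~\ref{qBinomialRook} for $s=3$. Your check of the boundary case $\ell=0$ is a harmless extra detail that the paper leaves implicit.
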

After switching the order of summation and relabelling the indices, we rewrite \eqref{3qBinCorFor} equivalently in a form analogous to \eqref{3Expansion},
\begin{equation}\label{3qBinCorFor2}
(X+Y)^m=\sum_{k=0}^m \sum_{\ell=0}^{m-k} h^{m-k-\ell} \left\{ \left\{ {m \atop m-\ell } \right\} \right\}_{m-\ell-k|q} \,Y^{2m-2k-\ell} X^{k}.
\end{equation}
We can write this equivalently as 
\begin{equation}
(X+Y)^m=\sum_{k=0}^m \binom{m}{k}_q Y^{m-k} \left( \sum_{\ell=0}^{m-k} \binom{m}{k}_q^{-1} \left\{ \left\{ {m \atop k+\ell } \right\} \right\}_{\ell|q} \,(hY)^{\ell} \right) X^{k}.
\end{equation}
Introducing
\begin{equation}\label{QBessel}
\vartheta_{m,k}(x;q):=\sum_{\ell=0}^{m-k} \binom{m}{k}_q^{-1} \left\{ \left\{ {m \atop k+\ell } \right\} \right\}_{\ell|q} \,x^{\ell},
\end{equation}
we can write \eqref{3qBinCorFor} for $h=-1$ in the following equivalent form similar to \eqref{bf2823b}, 
\begin{equation}\label{qbf2823b}
(X+Y)^m=\sum_{k=0}^m \binom{m}{k}_q Y^{m-k}\vartheta_{m,k}(-Y;q)  X^{k}.
\end{equation}
\begin{remark}\label{RemarkQBessel}
A $q$-analog to the Bessel polynomials was introduced by Ismail \cite{MEHI1989}. Abdi \cite{WHA1965} considered another version of the $q$-deformed Bessel polynomials, whereas, Dulucq \cite{SD1992} discussed the $q$-analog to the Bessel polynomials in a combinatorial way, and he gave the following $q$-analog to \eqref{BessPol},
\begin{equation}\label{qBessPol}
y_{n}(x;q)=\sum\limits_{k=0}^{n}\binom{n+k}{n-k}_{q}[2k-1]_{q}!!q^{\binom{n-k}{2}}x^k.
\end{equation}
Thus, comparing \eqref{QBessel} and \eqref{qBessPol}, it would be interesting to find a connection between $\vartheta_{m,k}(x;q)$ and the $q$-deformed Bessel polynomials $y_{\ell}(x;q)$.
\end{remark}
On the other hand, according to \cite[Proposition 5.2]{TMMS2011} (with our notation), we have for variables satisfying \eqref{s3qanalog} the binomial formula,
\begin{equation}\label{d3qBinoFor}
(X+Y)^m=\sum\limits_{k=0}^{m}\bigg(\sum_{i=0}^{m-k-1}h^id_{m}^{(3)}(k,i;q)Y^{m-k+i}\bigg)X^k,
\end{equation}
where, for all $0\leq k\leq i+k\leq n+1$, the numbers $d_m^{(3)}(k,i;q)$ satisfy the following recurrence relation
\begin{equation}\label{dRecurr}
d_{m+1}^{(3)}(k,i;q)=d_{m}^{(3)}(k,i;q)+[m-k+i-1]_{q}d_{m}^{(3)}(k,i-1;q)+q^{m+i-k+1}d_{m}^{(3)}(k-1, i;q),
\end{equation}
with the boundary condition 
$$d_{m+1}^{(3)}(m+1,0;q)=1 \text{ and } d_{m+1}^{(3)}(k,m+1-k;q)=0 \text{ for all } k=0,1,\ldots,m.$$
Hence, by adopting the indices for better comparison with \eqref{3qBinCorFor2}, we can rewrite \eqref{d3qBinoFor} equivalently as follows
\begin{equation}\label{d3qBinoFor2}
(X+Y)^m=\sum\limits_{k=0}^{m}\sum_{\ell=1}^{m-k}h^{m-k-\ell}d_{m}^{(3)}(k,m-k-\ell;q)Y^{2m-2k-\ell}X^k.
\end{equation}
Thus, comparing \eqref{d3qBinoFor2} with \eqref{3qBinCorFor2}, one has
\begin{equation}\label{dqBinoCoeff}
d_{m}^{(3)}(k,m-k-\ell;q)=\left\{ \left\{ {m \atop m-\ell } \right\} \right\}_{m-\ell-k|q}.
\end{equation}
Some index manipulations yield the following proposition.
\begin{proposition}\label{q3BinExP} The coefficients defined in \eqref{3qBinFile} are given as follows
\begin{equation}\label{q3BinEx}
\left\{ \left\{ {m \atop \ell } \right\} \right\}_{k|q} = d_{m}^{(3)}(\ell-k,k;q), 
\end{equation}
where the numbers $d_{m}^{(3)}(k,i;q)$ are given in \eqref{dRecurr}.
\end{proposition}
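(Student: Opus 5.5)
The plan is to read off the statement from the identity \eqref{dqBinoCoeff}, which was obtained by comparing the two normal-ordered expansions \eqref{3qBinCorFor2} and \eqref{d3qBinoFor2} of $(X+Y)^m$ under \eqref{s3qanalog}. The comparison is legitimate because the normal ordered monomials $Y^aX^b$ are linearly independent, i.e.\ they form a PBW-type basis of the $q$-deformed generalized Weyl algebra, as used throughout \cite{TMMS2011,TLM1}. Moreover, for fixed $m$ and $k$, distinct values of $\ell$ give distinct exponents $2m-2k-\ell$ of $Y$. Hence the coefficients of $h^{m-k-\ell}Y^{2m-2k-\ell}X^k$ on both sides must agree, regarding $h$ as a formal parameter or, equivalently, noting that the power of $h$ is determined by the monomial. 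This yields \eqref{dqBinoCoeff} for all admissible $k,\ell$.

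The index change is as follows. Given target indices $(\ell',k')$ with $0\le k'\le \ell'\le m$, set $\ell:=m-\ell'$ and $k:=\ell'-k'$. Then $m-\ell=\ell'$ and $m-\ell-k=k'$, so the right-hand side of \eqref{dqBinoCoeff} becomes $\{\{ {m\atop \ell'}\}\}_{k'|q}$. On the left-hand side, $m-k-\ell=m-(\ell'-k')-(m-\ell')=k'$, so it becomes $d_m^{(3)}(\ell'-k',k';q)$. Renaming the primed indices gives \eqref{q3BinEx}.

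The main point requiring care is the range of indices. In \eqref{d3qBinoFor2} the inner sum starts at $\ell=1$, whereas in \eqref{3qBinCorFor2} it starts at $\ell=0$. The term $\ell=0$ corresponds to $\ell'=m$, $k=k'... $ more precisely to $m-k=k'$ with $k'=m-\ell'+\dots$, that is, to the extremal case $i=m-k$ in \eqref{d3qBinoFor}, where the sum there stops at $i=m-k-1$. I would therefore check separately that both sides vanish in this case. On the rook side, this case corresponds to $\ell'=m$, i.e.\ $\lambda\in\mathcal{I}_{0,m}$ is the empty board. The rook number with $k'=m-k\ge1$ rooks is then zero, and for $k'=0$ with $k=m$ the term is the trivial $X^m$ term. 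That term matches the boundary condition $d^{(3)}_m(m,0;q)=1$ together with the convention $d_{m}^{(3)}(k,m-k;q)=0$.

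Similarly, indices outside $0\le k'\le\ell'$, or with $m-\ell'-k'<0$ as for the Jordan convention, give zero on both sides by the boundary conditions for the $d_m^{(3)}$. After these checks the identification holds for all indices, which proves the proposition.
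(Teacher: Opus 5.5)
Your argument is the paper's own: comparing coefficients of $h^{m-k-\ell}Y^{2m-2k-\ell}X^k$ in \eqref{3qBinCorFor2} and \eqref{d3qBinoFor2} gives \eqref{dqBinoCoeff}, and the substitution $\ell'=m-\ell$, $k'=m-\ell-k$ then yields \eqref{q3BinEx}. Your separate check of the $\ell=0$ terms, using $d_m^{(3)}(m,0;q)=1$ and $d_m^{(3)}(k,m-k;q)=0$ for $k<m$, is a worthwhile addition, because the inner sum in \eqref{d3qBinoFor} is empty for $k=m$ and so the printed formula omits the $X^m$ term.

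The one flaw is your final side remark, which should be deleted. For $s=3$, the entries with $m-\ell'-k'<0$ do not vanish; for instance $\left\{ \left\{ {3 \atop 2} \right\} \right\}_{2|q}=d_3^{(3)}(0,2;q)=[3]_q$. They also need no separate treatment, since every pair $0\le k'\le \ell'\le m-1$ is already covered by \eqref{dqBinoCoeff}.
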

Combining \eqref{3qBinFile} and \eqref{q3BinEx}, we obtain
\begin{equation}
\sum\limits_{\lambda \in \mathcal{I}_{m-\ell,\ell}}R_{3,1;q}[B_{\lambda}, k]=d_{m}^{(3)}(\ell-k,k;q).
\end{equation}

Finally, for the case $p\neq 1$, we let
\begin{equation}
\left\{ \left\{ {m \atop \ell } \right\} \right\}_{k|p;q} := \sum\limits_{\lambda \in \mathcal{I}_{m-\ell,\ell}} \mathscr{R}_{3, 1;p, q}[B_{\lambda}, k].
\end{equation}
By the same arguments as above, we have the following result.
\begin{corollary}
Let $s=3$ and $ p \neq 1 \neq q$. Then \eqref{pqdefgenWeyl} reduce to the commutation relations $XY=qYX+hY^3, Z_pY=pYZ_p, XZ_p=pZ_pX$, and one has 
\begin{equation}\label{Bin23pq}
(X+Y)^m=\sum_{\ell=0}^m \sum_{k=0}^{\ell} h^{k} \left\{ \left\{ {m \atop \ell } \right\} \right\}_{k|p;q} \,Y^{m-\ell+2k} Z_p^{k}X^{\ell-k}.
\end{equation}
\end{corollary}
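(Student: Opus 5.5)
The plan is to specialize the general machinery to $s=3$ and then identify the coefficients. Start from Theorem~\ref{thmbinomial}. With $s=3$ the exponent of $Y$ in \eqref{BinomMain} is $m-\ell+(s-1)k=m-\ell+2k$, so
$$
(X+Y)^m=\sum_{\ell=0}^m\sum_{k=0}^{\ell}h^k\,{\mathcal J}_{3,p,q}(m,\ell,k)\,Y^{m-\ell+2k}Z_p^kX^{\ell-k}.
$$
Next apply Corollary~\ref{pqBinomialRook} with $s=3$. It gives ${\mathcal J}_{3,p,q}(m,\ell,k)=\sum_{\lambda\in\mathcal{I}_{m-\ell,\ell}}\mathscr{R}_{3,1;p,q}[B_\lambda,k]$, which is exactly the coefficient $\left\{ \left\{ {m \atop \ell } \right\} \right\}_{k|p;q}$ defined just before the statement. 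Substituting yields \eqref{Bin23pq}.

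For completeness, I would recall why Corollary~\ref{pqBinomialRook} holds. The expansion \eqref{binomgen} writes $(X+Y)^m$ as a sum over words $Y^{m-\ell-\lambda_1}XY^{\lambda_1-\lambda_2}\cdots XY^{\lambda_\ell}$. Each such word is of the form $\mathrm{w}_{{\bf m},{\bf n}}$ with $m_1=0$, $|{\bf m}|=\ell$, $|{\bf n}|=m-\ell$, and its Ferrers board is $B_\lambda$; prefixing powers of $Y$ does not change the rook numbers, as noted after Theorem~\ref{thmword}. Theorem~\ref{TheoWordNOS} then gives the normal ordering of each word with coefficients $\mathscr{R}_{3,h;p,q}[B_\lambda,k]=h^k\mathscr{R}_{3,1;p,q}[B_\lambda,k]$ and monomials $Y^{m-\ell+2k}Z_p^kX^{\ell-k}$. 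Summing over $\ell$ and $\lambda$ gives the result, and the sum over $k$ truncates at $\ell$ automatically.

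The only point needing care is that the two routes, via Theorem~\ref{thmword} and via Theorem~\ref{TheoWordNOS}, use opposite word conventions ($XY\cdots$ versus $YX\cdots$). One must check that the board attached to the word equals $B_\lambda$ with columns $\lambda_1,\dots,\lambda_\ell$. This follows from the remark after Theorem~\ref{thmword} that zero exponents are allowed, so the two forms coincide. Otherwise the proof is a direct specialization.
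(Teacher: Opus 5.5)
Your proposal is correct and follows the paper's route: you specialize \eqref{BinomMain} to $s=3$ and identify ${\mathcal J}_{3,p,q}(m,\ell,k)$ with the sum of $(p,q)$-deformed $3$-rook numbers via Corollary~\ref{pqBinomialRook}, which is all the paper's ``by the same arguments as above'' amounts to, and your word-by-word use of Theorem~\ref{TheoWordNOS} is exactly how the paper justifies that corollary. One small quibble: the board of $Y^{m-\ell-\lambda_1}XY^{\lambda_1-\lambda_2}\cdots XY^{\lambda_\ell}$ equals $B_\lambda$ because of how the board is built from the word (the $j$-th $X$ from the left has exactly $\lambda_j$ letters $Y$ to its right, as the paper asserts in the proof of Corollary~\ref{qBinomialRook}), not because of the zero-exponent remark, which only reconciles the two word formats.
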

However, it seems not easy to find an explicit expression for the coefficients $\left\{ \left\{ {m \atop \ell } \right\} \right\}_{k|p;q}$. Exactly as in the case $p=1$ considered above, we can write \eqref{Bin23pq} equivalently as
\begin{equation}
(X+Y)^m=\sum_{k=0}^m \binom{m}{k}_{p,q} Y^{m-k} \left( \sum_{\ell=0}^{m-k} \binom{m}{k}_{p,q}^{-1} \left\{ \left\{ {m \atop k+\ell } \right\} \right\}_{\ell|p;q} \, h^{\ell}Y^{\ell}Z_p^{\ell} \right) X^{k}.
\end{equation}
Noting that $Y^{\ell}Z_p^{\ell}=p^{- \binom{\ell}{2}}(YZ_p)^{\ell}$, we introduce the following $(p,q)$-analog to \eqref{QBessel} by
\begin{equation}\label{PQBessel}
\vartheta_{m,k}(x;p,q):=\sum_{\ell=0}^{m-k} p^{- \binom{\ell}{2}}\binom{m}{k}_{p,q}^{-1} \left\{ \left\{ {m \atop k+\ell } \right\} \right\}_{\ell|p;q} \,x^{\ell}.
\end{equation}
Thus, we can write \eqref{Bin23pq} for $h=-1$ in the following form,
\begin{equation}\label{pqbf2823b}
(X+Y)^m=\sum_{k=0}^m \binom{m}{k}_{p,q} Y^{m-k}\vartheta_{m,k}(-YZ_p;p,q)  X^{k}.
\end{equation}
Similar to the case $p=1$ (Remark~\ref{RemarkQBessel}) it would be interesting to find the relation between $\vartheta_{m,k}(x;p,q)$ defined above and the $(p,q)$-deformed Bessel polynomials introduced in literature (e.g., in \cite{GYAM2021}).

\section{Conclusion}\label{concl}
Following our introduction of normal ordering in the $(p, q)$-deformed generalized Weyl Algebra in \cite{TLM1,TLM2} -- focusing first on its algebraic framework and subsequently on its interpretation via rook placements -- the present work establishes further explicit normal ordering expansions within this setting. Specifically, the binomial formula $(X+Y)^n$ was derived within this deformed algebra. Upon expanding the power one has to be careful due to the noncommutativity of the letters $X,Y $ and $Z_p$. To fix the order of the letters on the right-hand side, we chose to use the normal ordered form (as is usual). Thus, to derive the binomial formula, we had to perform two steps: 1) Write $(X+Y)^n$ as a sum over all possible words of length $2n$ containing $n$ letters of $X$ and $n$ letters of $Y$. This is standard and quite generic. 2) Normal order each of these words in the sum. This depends on the commutation relations assumed and involved in our case the $(p,q)$-deformed $s$-rook numbers introduced before. In the special case of the conventional Weyl algebra (i.e., $s=0$ and $p=1=q$), Varvak \cite{AV2005} gave a combinatorial interpretation of the normal ordering coefficients of $(X+Y)^n$ in terms of a sum of rook numbers over all Ferrers boards fitting into particular rectangles. We derived for small values of $s$ ($s=0,1,2,3$) and for special values of the parameters $p,q,h$ analogous results, thereby recovering known results in the literature as well as deriving new ones. However, for some cases (in particular when $p\neq 1 \neq q$, but sometimes also when $p=1, q\neq 1$) we were not able to give explicit expressions -- and these remain open for future study.

\end{document}